\documentclass{amsart}
\usepackage{amssymb,euscript,tikz,units}
\usepackage[colorlinks,citecolor=blue,linkcolor=blue]{hyperref}
\usepackage{verbatim}
\usepackage[nameinlink]{cleveref}
\usepackage{colonequals}
\usepackage{tikz}
\usepackage[titletoc]{appendix}
\usepackage{yhmath}
\usepackage{enumerate}
\usepackage{hyperref}
\usepackage{url}
\usepackage{subcaption}

\allowdisplaybreaks[4]

\newcommand{\N}{\mathbb{N}}
\newcommand{\calT}{\mathcal{T}}
\newcommand{\T}{\calT}
\newcommand{\M}{\mathcal{M}}
\newcommand{\sM}{\mathcal{M}}

\newcommand{\sN}{\mathcal{N}}
\newcommand{\R}{\mathbb{R}}
\newcommand{\cR}{\mathcal{R}}
\newcommand{\cD}{\mathcal{D}}
\newcommand{\Z}{\mathbb{Z}}

\newcommand{\eps}{\varepsilon}
\newcommand{\wep}{Weil-Petersson}

\newcommand{\sbs}{\subset}

\newcommand{\E}{\mathbb{E}_{\rm WP}^g}

\def\sys{\mathop{\rm sys}}
\def\area{\mathop{\rm Area}}
\def\arcsinh{\mathop{\rm arcsinh}}
\def\arccosh{\mathop{\rm arccosh}}
\def\Vol{\mathop{\rm Vol}}

\DeclareMathOperator{\Prob}{Prob_{WP}^{g}}

\def\Mod{\mathop{\rm Mod}}

\def\sys{\mathop{\rm sys}}
\def\area{\mathop{\rm Area}}
\def\arcsinh{\mathop{\rm arcsinh}}
\def\arccosh{\mathop{\rm arccosh}}
\def\Vol{\mathop{\rm Vol}}
\def\Volwp{\mathop{\rm Vol_{\rm WP}}}

\def\Prob{\mathop{\rm Prob}\nolimits_{\rm WP}^g}
\def\Mod{\mathop{\rm Mod}}

\newcommand{\lss}{\ell_{\sys}^{\rm sep}}
\newcommand{\lns}{\ell_{\sys}^{\rm ns}}

\newcommand{\feight}{\rm f-8}

\newcommand{\MN}{\mathcal{N}}
\newcommand{\Nset}{\MN_{(0,3),\star}^{(g-2,3)}}
\newcommand{\Nnumber}{N_{(0,3),\star}^{(g-2,3)}}

\newcommand{\MC}{\mathcal{C}}
\newcommand{\MD}{\mathcal{D}}

\newcommand{\nsys}{\ell^{ns}_{sys}}

\numberwithin{equation}{section}

\theoremstyle{plain}
\newtheorem{theorem}{Theorem}[section]

\newtheorem{proposition}[theorem]{Proposition}
\newtheorem{lemma}[theorem]{Lemma}

\newtheorem{remark}[theorem]{Remark}

\newcommand{\be}{\begin{equation}}
\newcommand{\ene}{\end{equation}}
\newcommand{\br}{\begin{remark}}
\newcommand{\er}{\end{remark}}
\newcommand{\bl}{\begin{lem}}
\newcommand{\el}{\end{lem}}
\newcommand{\bcor}{\begin{cor}}
\newcommand{\ecor}{\end{cor}}
\newcommand{\bpro}{\begin{pro}}
\newcommand{\epro}{\end{pro}}
\newcommand{\ben}{\begin{enumerate}}
\newcommand{\een}{\end{enumerate}}
\newcommand{\bp}{\begin{proof}}
\newcommand{\ep}{\end{proof}}
\newcommand{\bpo}{\begin{pro}}
\newcommand{\epo}{\end{pro}}
\newcommand{\beq}{\begin{equation*}}
\newcommand{\eeq}{\end{equation*}}
\newcommand{\bear}{\begin{eqnarray}}
\newcommand{\eear}{\end{eqnarray}}
\newcommand{\beqar}{\begin{eqnarray*}}
\newcommand{\eeqar}{\end{eqnarray*}}
\newcommand{\bt}{\begin{theorem}}
\newcommand{\et}{\end{theorem}}
\newcommand{\bex}{\begin{excer}}
\newcommand{\eex}{\end{excer}}

\theoremstyle{definition}
\newtheorem{definition}[theorem]{Definition}

\theoremstyle{remark}

\newtheorem*{con*}{Construction}
\newtheorem*{rem*}{Remark}
\newtheorem*{exam*}{Example}
\newtheorem*{exams*}{Examples}
\newtheorem*{thm*}{\bf Theorem}
\newtheorem*{que*}{Question}

\newtheorem*{Def*}{Definition}
\newtheorem*{Cons*}{Construction}
\newtheorem*{Lem*}{Lemma}
\newtheorem*{Conj*}{\bf Conjecture}
\begin{document}
\title{Non-simple systoles on random hyperbolic surfaces for large genus}

\author{Yuxin He}
\address{Department of Mathematical Sciences and Yau Mathematical Sciences Center, Tsinghua University, Beijing, China}
\email[(Y.~H.)]{hyx21@mails.tsinghua.edu.cn}

\author{Yang Shen}
\address{School of Mathematical Sciences,
Fudan University, Shanghai, China}
\email[(Y.~S.)]{shenwang@fudan.edu.cn}

\author{Yunhui Wu}
\address{Department of Mathematical Sciences and Yau Mathematical Sciences Center, Tsinghua University, Beijing, China}
\email[(Y.~W.)]{yunhui\_wu@tsinghua.edu.cn}

\author{Yuhao Xue}
\address{Institut des Hautes \'Etudes Scientifiques (IHES), Bures-sur-Yvette, France}
\email[(Y.~X.)]{xueyh@ihes.fr}

\maketitle

\begin{abstract}
In this paper, we investigate the asymptotic behavior of the non-simple systole, which is the length of a shortest non-simple closed geodesic, on a random closed hyperbolic surface on the moduli space $\sM_g$ of Riemann surfaces of genus $g$ endowed with the Weil-Petersson measure. We show that as the genus $g$ goes to infinity, the non-simple systole of a generic hyperbolic surface in $\sM_g$ behaves exactly like $\log g$.   
\end{abstract}

 \tableofcontents

\section{Introduction}
The study of closed geodesics on hyperbolic surfaces has a deep connection to their spectral theory, dynamics, and hyperbolic geometry. Let $X=X_g$ be a closed hyperbolic surface of genus $g\geq 2$. The systole of $X$, the length of a shortest closed geodesic on $X$, is always realized by a simple closed geodesic, i.e., a closed geodesic without self-intersections. The \emph{non-simple systole} $\nsys(X)$ of $X$ is defined as
\[\ell^{ns}_{sys}(X)=\min\{\ell_{\alpha}(X); \ \textit{$\alpha\subset X$ is a non-simple closed geodesic} \}\]
where $\ell_{\alpha}(X)$ is the length of $\alpha$ in $X$. It is known that $\nsys(X)$ is always realized as a figure-eight closed geodesic in $X$ (see e.g. \cite[Theorem 4.2.4]{Buser10}). In this work, we view the non-simple systole as a random variable on the moduli space $\sM_g$ of Riemann surfaces of genus $g$ endowed with the Weil-Petersson probability measure $\Prob$. This subject was initiated by Mirzakhani in \cite{Mirz10, Mirz13}, based on her celebrated thesis works \cite{Mirz07,Mirz07-int}. Firstly, it is known that for all $g\geq 2$,  
$\inf\limits_{X\in \sM_g}\nsys(X)=2\arccosh (3)\sim 3.52...$ (see e.g. \cite{Yam82, Buser10}) 
and
$\sup\limits_{X\in \sM_g}\nsys(X)\asymp \log g$ (see e.g. \cite{BS94, Tor23}). In this paper, we show that as $g$ goes to infinity, a generic hyperbolic surface in $\sM_g$ has its non-simple systole behaving like $\log g$. More precisely, let $\omega:\{2,3,\cdots\}\to\R^{>0}$ be any function satisfying 
\be \label{eq-omega}
\lim \limits_{g\to \infty}\omega(g)= +\infty \ \textit{and} \ \lim \limits_{g\to \infty}\frac{\omega(g)}{\log\log g} = 0. 
\ene
\begin{theorem}\label{mt-1}
For any $\omega(g)$ satisfying \eqref{eq-omega}, the following limit holds:
\[\lim \limits_{g\to \infty}\Prob\left(X\in \sM_g; \ |\nsys(X)-(\log g-\log \log g)|< \omega(g) \right)=1.\]
\end{theorem}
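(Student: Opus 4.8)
\emph{Strategy.} Since $\nsys(X)$ is always realized by a figure-eight geodesic and a regular neighborhood of any figure-eight is an embedded subsurface of Euler characteristic $-1$, every competitor fills either an embedded pair of pants or an embedded one-holed torus. The plan is to estimate, for a threshold $L$, the number $N_{\le L}(X)$ of figure-eight geodesics of length at most $L$, and to run a first/second moment argument at the two thresholds $L_\pm:=\log g-\log\log g\pm\omega(g)$. For the lower bound on $\nsys$ I will show $\E[N_{\le L_-}]\to0$, so that Markov's inequality forces $\Prob(\nsys\le L_-)\to0$; for the upper bound I will show $\E[N_{\le L_+}]\to\infty$ together with a matching second-moment bound, so that $\Prob(\nsys\le L_+)\to1$. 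Combining the two events yields the theorem, and any additive $O(1)$ error in the threshold is harmless because $\omega(g)\to\infty$.

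\emph{The length formula and the first moment.} For the dominant topological type---an embedded pair of pants whose three cuffs $\gamma_1,\gamma_2,\gamma_3$ are non-separating and whose complement is connected of type $(g-2,3)$---the figure-eight wrapping the cuffs $\gamma_1,\gamma_2$ has length $\ell$ determined by the cuff lengths $x_1,x_2,x_3$ through
\[
\cosh\!\big(\tfrac{\ell}{2}\big)=2\cosh\!\big(\tfrac{x_1}{2}\big)\cosh\!\big(\tfrac{x_2}{2}\big)+\cosh\!\big(\tfrac{x_3}{2}\big),
\]
which in particular recovers $\inf\nsys=2\arccosh(3)$ as $x_1,x_2,x_3\to0$. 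Writing $\Omega_L$ for the region where this quantity is $\le\cosh(L/2)$, Mirzakhani's integration formula gives
\[
\E[N_{\le L}]\;=\;\frac{c_\Gamma}{V_{g}}\int_{\Omega_L}x_1x_2x_3\,V_{g-2,3}(x_1,x_2,x_3)\,dx_1\,dx_2\,dx_3,
\]
with $V_g$ the Weil--Petersson volume of $\sM_g$ and $c_\Gamma$ a symmetry constant. Inserting the asymptotics $V_{g-2,3}/V_g\sim(8\pi^2 g)^{-1}$ and $V_{g-2,3}(x)\approx V_{g-2,3}\prod_i\frac{\sinh(x_i/2)}{x_i/2}$ (valid since the relevant $x_i$ are of size $O(\log g)=o(\sqrt g)$) reduces everything to $\int_{\Omega_L}\prod_i 2\sinh(x_i/2)\,dx$. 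A direct evaluation shows this integral is $\asymp L\,e^{L}$: the factor $e^L$ is the exponential volume growth, while the extra linear factor $L$ comes from the one-parameter freedom in splitting the total length budget $\sim L$ among the three cuffs. Hence $\E[N_{\le L}]\asymp L e^{L}/g$, and solving $Le^{L}/g\asymp1$ gives exactly $L\approx\log g-\log\log g$; the $-\log\log g$ shift is precisely the signature of the linear factor $L$.

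\emph{Lower bound.} At $L=L_-$ one has $e^{L_-}=g\,e^{-\omega(g)}/\log g$, so $\E[N_{\le L_-}]\asymp e^{-\omega(g)}\to0$; here one must also check that figure-eights of the remaining topological types (one-holed torus, separating or self-glued pants) contribute a strictly smaller expected count, which follows from their smaller complementary volumes and larger minimal lengths. Markov's inequality then gives $\Prob(\nsys\le L_-)\to0$.

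\emph{Upper bound and the main difficulty.} At $L=L_+$ the same computation gives $\E[N_{\le L_+}]\asymp e^{\omega(g)}\to\infty$, but a large first moment alone does not produce a short figure-eight with high probability. The crux is therefore a second-moment estimate: I will expand $\E[N_{\le L_+}^2]$ as a sum over ordered pairs of figure-eights and apply Mirzakhani's integration formula to the multicurves formed by the two underlying pairs of pants, summing over all relative topological configurations (disjoint pants with connected or disconnected complement, pants sharing cuffs, and overlapping pants). Showing $\E[N_{\le L_+}^2]\le(1+o(1))\E[N_{\le L_+}]^2+\E[N_{\le L_+}]$, equivalently $\mathrm{Var}(N_{\le L_+})=o(\E[N_{\le L_+}]^2)$, then yields $\Prob(N_{\le L_+}\ge1)\to1$ by the Paley--Zygmund inequality. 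This second-moment analysis is the main obstacle: it requires uniform control of the Weil--Petersson volumes of the several surfaces obtained by cutting along two pants, together with the error terms in the sinh-approximation when cuff lengths are as large as $\Theta(\log g)$, and a careful bookkeeping of the "diagonal" configurations whose two pants fail to be topologically independent, to confirm that they are negligible.
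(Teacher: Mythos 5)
Your scaffold coincides with the paper's: first moment at $L_-$ plus Markov for the lower bound (including the observation that the non-$(g-2,3)$ topological types are negligible, which matches Lemma \ref{thm E[figure-8] small term}), the length formula \eqref{eq figure-8 length}, the asymptotic $\E[N_{\feight}(X,L)]\asymp Le^L/g$ explaining the $-\log\log g$ shift, and a second moment at $L_+$. But in the upper bound, which you correctly identify as the crux, there are two genuine gaps rather than mere omitted bookkeeping. First, you propose to expand $\E[N_{\le L_+}^2]$ by "applying Mirzakhani's integration formula to the multicurves formed by the two underlying pairs of pants," summing over configurations including overlapping pants. When $P(\Gamma_1)$ and $P(\Gamma_2)$ intersect, however, their six cuffs are not disjoint and hence do not form a simple closed multicurve, so Mirzakhani's integration formula does not apply to them at all; this overlapping case is precisely where the paper needs new machinery. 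The paper fills $P(\Gamma_1)\cup P(\Gamma_2)$ to a subsurface $S(\Gamma_1,\Gamma_2)$ with geodesic boundary, and then counts pairs \emph{inside} a fixed subsurface: for $|\chi(S)|\geq 3$ via the filling $2$-tuple bound (Theorem \ref{thm count fill k-tuple} combined with the subsurface-summation estimates of \cite{WX22-GAFA}, giving Proposition \ref{prop C geq3}), and for $S\simeq S_{1,2}$ and $S\simeq S_{0,4}$ -- where the paper explicitly warns that the generic counting results are too lossy -- via the McShane--Mirzakhani identity (Theorem \ref{thm McShane id}), after a case-by-case classification of which boundary curves of $S$ each $\Gamma_i$ contains. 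None of this mechanism appears in your proposal; you name the obstacle and list desiderata, but supply no route past it.

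Second, and this would defeat your plan even granting the counting technology: you run the second moment on the \emph{untruncated} count of all figure-eights, whereas the paper replaces $N_{\feight}$ by the proxy $\Nnumber(X,L)$, which imposes the floor $\ell(\gamma_i)\geq 10\log L$ on every cuff. This floor is not cosmetic. In the configurations of Lemma \ref{lemma123} (both triples share one boundary curve $\gamma_1$ of $S(\Gamma_1,\Gamma_2)\simeq S_{1,2}$), the fourth boundary length is controlled only by $\ell(\gamma_2)\leq 4L-2\ell(\gamma_1)$, so the resulting integral carries a factor like $\int e^{-\ell(\gamma_1)/2}$; with the floor this produces the saving $e^{-5\log L}=L^{-5}$ that makes $\E[C_{1,2}(X,L)]\prec e^{2L}/g^2=o\left(\E[\Nnumber(X,L)]^2\right)$ (Proposition \ref{c12}, cf.\ \eqref{expectation123}), while without it the same estimates only yield $O(L^5e^{2L}/g^2)$, which \emph{dominates} $\E[N]^2\asymp L^2e^{2L}/g^2$. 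So $\mathrm{Var}=o(\E^2)$ is unobtainable by these bounds for the untruncated variable, and Paley--Zygmund does not close. The truncation costs nothing where it matters: $\E[\Nnumber(X,L)]\sim Le^L/(2\pi^2g)$ still holds (Proposition \ref{p-e-1}), and each triple in $\Nnumber(X,L)$ yields a figure-eight of length $\leq L+c$, the $O(1)$ shift being absorbed by $\omega(g)\to\infty$ exactly as you observe. The missing idea, in short, is to truncate the counting variable \emph{before} taking the second moment, and to supply the filled-subsurface plus McShane--Mirzakhani counting for the overlapping pairs.
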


\begin{rem*}
It was shown in \cite[Theorem 4]{NWX23} that for any $\epsilon>0$,
\[\lim \limits_{g\to \infty}\Prob\left(X\in \sM_g; \ (1-\epsilon)\log g< \nsys(X)<2\log g \right)=1.\]
\end{rem*}

As a direct consequence of Theorem \ref{mt-1}, as $g$ goes to infinity, the asymptotic behavior of the expected value of $\nsys(\cdot)$ over $\sM_g$ can also be determined.

\begin{theorem}\label{mt-2}
The following limit holds:
\[\lim \limits_{g\to \infty} \frac{\int_{\sM_g}\nsys(X)dX}{\Volwp(\sM_g) \log g}=1.\]
\end{theorem}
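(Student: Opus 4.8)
The plan is to deduce Theorem \ref{mt-2} from Theorem \ref{mt-1} by a standard expectation-versus-high-probability argument, splitting the integral into a ``typical'' region where $\nsys$ is concentrated near $\log g$ and a ``bad'' region of small measure where $\nsys$ can be large but is controlled by a uniform upper bound. Concretely, write
\[
\frac{\int_{\sM_g}\nsys(X)\,dX}{\Volwp(\sM_g)\log g}
=\frac{1}{\log g}\,\E\big[\nsys(\cdot)\big],
\]
where $\E$ denotes the normalized Weil-Petersson average over $\sM_g$. The goal is to show this quantity tends to $1$. Fixing any $\omega(g)$ satisfying \eqref{eq-omega} (for definiteness one may take $\omega(g)=\sqrt{\log\log g}$), let $A_g=\{X\in\sM_g:\ |\nsys(X)-(\log g-\log\log g)|<\omega(g)\}$ be the good event from Theorem \ref{mt-1}, so that $\Prob(A_g)\to 1$ and $\Prob(A_g^c)\to 0$.

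First I would establish the lower bound. On the good set $A_g$ we have $\nsys(X)>\log g-\log\log g-\omega(g)$, and since $\nsys\geq 2\arccosh(3)>0$ everywhere, discarding the (nonnegative) contribution of $A_g^c$ gives
\[
\E\big[\nsys(\cdot)\big]\ \geq\ \big(\log g-\log\log g-\omega(g)\big)\,\Prob(A_g).
\]
Dividing by $\log g$ and using $\Prob(A_g)\to 1$ together with $\tfrac{\log\log g+\omega(g)}{\log g}\to 0$ (which follows from $\omega(g)=o(\log\log g)$), the right-hand side tends to $1$, so $\liminf_g \tfrac{1}{\log g}\E[\nsys]\geq 1$.

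For the upper bound I would split $\E[\nsys]=\E[\nsys\cdot \mathbf 1_{A_g}]+\E[\nsys\cdot \mathbf 1_{A_g^c}]$. The first term is at most $(\log g-\log\log g+\omega(g))\Prob(A_g)\leq \log g+\omega(g)$, which after dividing by $\log g$ contributes at most $1+o(1)$. The main obstacle, and the only genuinely nontrivial step, is controlling the second term: on the bad set $A_g^c$ the non-simple systole could a priori be large, and I need a uniform upper bound on $\nsys$ together with a tail estimate strong enough that the product with $\Prob(A_g^c)$ is $o(\log g)$. The deterministic bound $\sup_{X\in\sM_g}\nsys(X)\asymp \log g$ quoted in the introduction (via \cite{BS94, Tor23}) gives $\nsys(X)\leq C\log g$ for some absolute constant $C$ on \emph{all} of $\sM_g$, whence
\[
\E\big[\nsys\cdot\mathbf 1_{A_g^c}\big]\ \leq\ C\log g\cdot\Prob(A_g^c)\ =\ o(\log g),
\]
since $\Prob(A_g^c)\to 0$. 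Dividing by $\log g$ shows the bad term is $o(1)$, giving $\limsup_g\tfrac{1}{\log g}\E[\nsys]\leq 1$. Combining the two bounds yields the claimed limit. I expect the deterministic upper bound $\nsys\leq C\log g$ to be the point requiring the most care to cite precisely; if an explicit uniform bound is not directly available one can instead use a quantitative tail estimate from the proof of Theorem \ref{mt-1} (e.g. that $\Prob(\nsys>t)$ decays fast enough in $t$) to dominate the bad-region contribution, but the crude deterministic bound is cleanest when applicable.
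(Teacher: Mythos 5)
Your proposal is correct and follows essentially the same route as the paper: fix an admissible $\omega(g)$ (the paper takes $\omega(g)=\log\log\log g$), use the good event from Theorem \ref{mt-1} for the lower bound, and control the bad set via the deterministic bound $\sup_{X\in\sM_g}\nsys(X)\leq C\log g$, which the paper supplies as Lemma \ref{thm f-8 <clogg} (the shortest figure-eight closed geodesic has length $\prec\log g$, and $\nsys$ is realized by such a geodesic). The only cosmetic difference is your choice $\omega(g)=\sqrt{\log\log g}$, which is equally valid under \eqref{eq-omega}.
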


\begin{proof}
Take $\omega(g)=\log \log \log g$ and set $V_g=\Volwp(\sM_g)$. Define
\[A_{\omega}(g):=\{X\in \sM_g; \ |\nsys(X)-(\log g-\log \log g) |< \omega(g)\}.\]
By Theorem \ref{mt-1} we know that
\[\lim\limits_{g\to \infty}\Prob\left(X\in \sM_g; \ X\in A_{\omega}(g)\right)=1.\]
Then firstly from Markov's inequality, we have
\beqar
\liminf\limits_{g\to \infty}\frac{\int_{\sM_g}\nsys(X)dX}{V_g \cdot \log g}\geq \lim\limits_{g\to \infty}\Prob\left(X\in \sM_g; \ X\in A_{\omega}(g)\right)=1.
\eeqar
For the other direction, since $\sup\limits_{X\in \sM_g}\nsys(X)\leq C\cdot \log g$ for some universal constant $C>0$ (see e.g. Lemma \ref{thm f-8 <clogg}),
\beqar
&&\ \ \  \limsup\limits_{g\to \infty}\frac{\int_{\sM_g}\nsys(X)dX}{V_g\cdot \log g}\\
&&=\limsup\limits_{g\to \infty} \left(\frac{\int_{A_{\omega}(g)}\nsys(X)}{V_g\cdot \log g}
+\frac{\int_{A^c_{\omega}(g)}\nsys(X)}{V_g\cdot \log g}\right)\\
&& \leq 1+ C\cdot \limsup\limits_{g\to \infty}\Prob\left(X\in \sM_g; \ X\notin A_{\omega}(g)\right)=1.
\eeqar
The proof is complete.
\end{proof}

\begin{rem*}
\ben
\item Mirzakhani-Petri in \cite{MP19} showed that
\[\lim \limits_{g\to \infty} \frac{\int_{\sM_g}\ell_{\rm sys}(X)dX}{\Volwp(\sM_g)}=1.61498...\] where $\ell_{\rm sys}(X)$ is the systole of $X$.

\item Based on \cite{NWX23}, joint with Parlier, the third and fourth named authors in \cite{PWX22} showed that
\[\lim \limits_{g\to \infty} \frac{\int_{\sM_g}\lss(X)dX}{\Volwp(\sM_g) \log g}=2\] where $\lss(X)$ is the length of a shortest separating simple closed geodesic in $X$, an unbounded function over $\sM_g$.
\een
\end{rem*}

 The geometry and spectra of random hyperbolic surfaces under the Weil-Petersson measure have been widely studied in recent years. For examples, one may see \cite{GPY11} for Bers' constant, \cite{Mirz13,WX22-GAFA} for diameter, \cite{MP19} for systole, \cite{Mirz13, NWX23, PWX22} for separating systole, \cite{Mirz13, WX22-GAFA, LW21, AM23} for the first eigenvalue, \cite{GMST19} for eigenfunction, \cite{Monk22} for Weyl law, \cite{Rud22, RuW23} for GOE, \cite{WX22pgt} for prime geodesic theorem, \cite{Naud23} for determinant of Laplacian. One may also see \cite{MS20, MT20, Hide21, SW22, HW22, HHH22, HT22, DS23, Gong23, MS23} and the references therein for more related topics.\\

\noindent \textbf{Strategy on the proof of Theorem \ref{mt-1}.} The proof of Theorem \ref{mt-1} mainly consists of two parts.\\

A relatively easier part is to prove the lower bound on $\nsys$, that is to show that
\be\label{mt-low}
\lim \limits_{g\to \infty}\Prob\left(X\in \sM_g; \ \nsys(X)> \log g-\log \log g- \omega(g) \right)=1.
\ene 
We know that $\nsys(X)$ is realized by a figure-eight closed geodesic that is always filling in a unique pair of pants. And the length of such a figure-eight closed geodesic can be determined by the lengths of the three boundary geodesics of the pair of pants (see e.g., formula \eqref{eq figure-8 length}). For $L=L_g=\log g-\log \log g- \omega(g)$ and $X\in \sM_g$, denote by $N_{\feight}(X,L)$ the number of figure-eight closed geodesics of length $\leq L$ in $X$. We view it as a random variable on $\sM_g$. Then using Mirzakhani's integration formula and change of variables, a direct computation shows that (see Proposition \ref{thm E[N_f8]=}) its expected value $\E[N_{\feight}(X,L)] $ satisfies that as $g\to \infty$, 
\be
\E[N_{\feight}(X,L)] \sim \frac{Le^L}{8\pi^2 g}\to 0.
\ene
Here we say $f(g)\sim h(g)$ if $\lim\limits_{g\to \infty}\frac{f(g)}{h(g)}=1$. Thus, we have $$\Prob\left(X\in \sM_g; \ N_{\feight}(X,L_g)\geq 1 \right)\leq \E[N_{\feight}(X,L_g)]\to 0$$
as $g\to \infty$. This in particular implies \eqref{mt-low}.\\

The hard part of Theorem \ref{mt-1} is the upper bound, that is to show that
\be\label{mt-upp}
\lim \limits_{g\to \infty}\Prob\left(X\in \sM_g; \ \nsys(X)< \log g-\log \log g+ \omega(g) \right)=1.
\ene 
Set $L=L_g= \log g-\log \log g+ \omega(g),$ and for any $X\in \sM_g$, we consider the following set of pairs of \emph{ordered} simple closed multi-geodesics in $X$

$$\mathcal{N}_{(0,3),\star}^{(g-2,3)}(X,L)=\left\{(\gamma_1,\gamma_2,\gamma_3);\
\begin{matrix}(\gamma_1,\gamma_2,\gamma_3)\text{ is a pair of ordered simple closed}\\  \text{geodesics  such  that }X\setminus \bigcup_{i=1}^3\gamma_i\cong  S_{0,3} \cup S_{g-2,3},\\ \ell(\gamma_1)\leq L,\ \ell(\gamma_2)+\ell(\gamma_3)\leq L\\ \text{ and }\ell(\gamma_1),\ \ell(\gamma_2),\ \ell(\gamma_3)\geq 10\log L\end{matrix}\right\}$$
and denote by
$$N_{(0,3),\star}^{(g-2,3)}(X,L)=\#\mathcal{N}_{(0,3),\star}^{(g-2,3)}(X,L).$$
For any pair $(\gamma_1,\gamma_2,\gamma_3)\in\mathcal{N}_{(0,3),\star}^{(g-2,3)}(X,L)$, assume $P$ is the pair of pants in $X$ with boundary geodesics $\gamma_1,\ \gamma_2$ and $\gamma_3$. There exists a figure-eight closed geodesic in $P$ with length $\leq L+2\log 6$. Hence 
\begin{equation}
 \label{i-link f-8 with Nstar} 
\begin{aligned}
     &\Prob\left(X\in \M_g ;\  N_{\feight}(X,L+2\log 6)=0 \right)\\
     \leq &\Prob\left(X\in \M_g ;\ \Nnumber(X,L)=0 \right) \\
 \leq &\frac{\E\left[\big(\Nnumber(X,L)\big)^2\right]-\E\left[\Nnumber(X,L)\right]^2}{\E\left[\Nnumber(X,L)\right]^2}.
     \end{aligned}
 \end{equation}
To prove \eqref{mt-upp}, it suffices to show 
\be\label{i-e-aim}
\lim\limits_{g\to \infty} \mathrm{RHS} \textit{\rm{ of \eqref{i-link f-8 with Nstar}}}=0.
\ene

\noindent Using Mirzakhani's integration formula and known bounds on Weil-Petersson volumes, direct computations show that (see Proposition \ref{p-e-1}),
\be \label{i-a-1}
\E\left[N_{(0,3),\star}^{(g-2,3)}(X,L)\right]= \frac{1}{2\pi^2 g}Le^L\left(1+O\left(\frac{\log L}{L}\right)\right)\sim \frac{e^{\omega(g)}}{2\pi^2}.
\ene
Next we consider $\E\left[\big(\Nnumber(X,L)\big)^2\right]$. For any $\Gamma\in \mathcal{N}_{(0,3),\star}^{(g-2,3)}(X,L)$, denote by $P(\Gamma)$ the unique pair of pants bounded by $\Gamma$ and let $\overline{P(\Gamma)}$ be its completion in $X$. Split $\big(\Nnumber(X,L)\big)^2$ as follows:
\begin{align*}
&A(X,L)=\#\left\{(\Gamma_1,\Gamma_2);\ P(\Gamma_1)=P(\Gamma_2)\right\},\\
&B(X,L)=\#\left\{(\Gamma_1,\Gamma_2); \ \overline{P(\Gamma_1)}\cap \overline{P(\Gamma_2)}=\emptyset \right\},\\
&C(X,L)=\#\left\{(\Gamma_1,\Gamma_2);\  P(\Gamma_1)\neq P(\Gamma_2),\ {P(\Gamma_1)}\cap {P(\Gamma_2)}\neq\emptyset\right\},\\
&D(X,L)=\#\left\{(\Gamma_1,\Gamma_2);\  P(\Gamma_1)\cap P(\Gamma_2)=\emptyset,\ \overline{P(\Gamma_1)}\cap \overline{P(\Gamma_2)}\neq\emptyset\right\}.
\end{align*}
The sets $A(X,L),\ B(X,L),\ C(X,L),\ D(X,L)$, respectively, correspond to the cases where the two pairs of pants are equal, disjoint, distinct with non-empty intersection and distinct but touching along their boundary geodesics.
It is clear that 
\begin{equation}\label{i-a-1-1}
\begin{aligned}
\E\left[\left(\Nnumber(X,L)\right)^2\right]&=\E\left[A(X,L)\right]+\E\left[B(X,L)\right]\\
&+\E\left[C(X,L)\right]+\E\left[D(X,L)\right].
\end{aligned}
\end{equation}

Since for a pair of pants $P$ in $X$, there exist at most $6$ different $\Gamma'$s such that $P=P(\Gamma)$, it follows from \eqref{i-a-1} that
\begin{align}\label{i-a-1-2}
\E\left[A(X,L)\right]\leq 6\cdot \E\left[\mathcal{N}_{(0,3),\star}^{(g-2,3)}(X,L)\right]\prec \frac{Le^L}{g}\asymp e^{\omega(g)}.
\end{align}

Using Mirzakhani's integration formula and known bounds on Weil-Petersson volumes, direct computations can show that (see Proposition \ref{estimation B(X,L)} and \ref{D(x,L)})
\begin{align}\label{i-a-2}
\E\left[B(X,L)\right]=\frac{1}{4\pi^4 g^2}L^2e^{2L}\left(1+O\left(\frac{\log L}{L}\right)\right)\sim \E\left[N_{(0,3),\star}^{(g-2,3)}(X,L)\right]^2
\end{align}
and
\be\label{i-a-3}
    \E\left[D(X,L) \right]\prec \frac{e^{2L}}{g^2L^6}=o(1).
\ene
Combining \eqref{i-a-1} and \eqref{i-a-2}, we have the following important cancellation:
\be\label{i-a-canc}
\left|\E\left[B(X,L)\right]- \E\left[N_{(0,3),\star}^{(g-2,3)}(X,L)\right]^2 \right|\prec \frac{L^2e^{2L}}{g^2}\cdot \frac{\log L}{L}=e^{2\omega(g)}\cdot o(1).
\ene

The \emph{crucial} part is to bound $\E\left[C(X,L)\right]$. For this part, our method is inspired by the ones in \cite{MP19, WX22-GAFA, NWX23}. Denote by $S(\Gamma_1,\Gamma_2)$ the compact subsurface in $X$ of geodesic boundary such that $\Gamma_1\cup \Gamma_2$ is filling in it. Then one can divide $C(X,L)$ as follows:
\begin{align*}
&C_{0,4}(X,L)=\#\left\{(\Gamma_1,\Gamma_2)\in\mathcal{C}(X,L);\ S(\Gamma_1,\Gamma_2)\cong S_{0,4}\right\},\\
&C_{1,2}(X,L)=\#\left\{(\Gamma_1,\Gamma_2)\in\mathcal{C}(X,L);\ S(\Gamma_1,\Gamma_2)\cong S_{1,2}\right\},\\
&C_{\geq 3}(X,L)=\#\{(\Gamma_1,\Gamma_2)\in\mathcal{C}(X,L);\ |\chi(S(\Gamma_1,\Gamma_2))|\geq 3\}.
\end{align*}  
Through using the method in \cite{WX22-GAFA} and the counting results on filling multi-geodesics in \cite{WX22-GAFA, WX22pgt}, we can show that (see Proposition \ref{prop C geq3})
\be\label{i-a-4}
\E\left[C_{\geq 3}(X,L)\right]\prec \left(L^{67}e^{(2+\epsilon)\cdot L}\frac{1}{g^3}+\frac{L^3e^{8L}}{g^{11}}\right)=o(1).
\ene
For $C_{0,4}(X,L)$ and $C_{1,2}(X,L)$, through classifying all the accurate relative positions of $(\Gamma_1,\Gamma_2)$ in both $S_{1,2}$ and $S_{0,4}$, applying the McShane-Mirzakhani identity in \cite{Mirz07} as for counting closed geodesics (we warn here that both the general counting result and the counting result in \cite{WX22-GAFA, WX22pgt} on closed geodesics are inefficient to deal with these two cases), and then using  Mirzakhani's integration formula and known bounds on Weil-Petersson volumes, we can show that (see Proposition \ref{c12} and \ref{c04})
\be\label{i-a-5}
\E\left[C_{1,2}(X,L)\right]\prec \frac{e^{2L}}{g^2}=o(1)
\ene
and
\be\label{i-a-6}
     \E\left[C_{0,4}(X,L) \right]\prec \frac{Le^{2L}}{g^2}=o(1).
\ene

Then, combining all these equations \eqref{i-a-1}---\eqref{i-a-6}, one may finish the proof of \eqref{i-e-aim}, thus get \eqref{mt-upp} which is the upper bound in Theorem \ref{mt-1}. \\

\noindent \textbf{Notations.} For any two nonnegative functions $f$ and $h$ (may be of multi-variables), we say $f\prec h$ if there exists a uniform constant $C>0$ such that $f\leq Ch$. And we also say $f\asymp h$ if $f\prec h$ and $h\prec f$.  \\

\noindent \textbf{Plan of the paper.} Section \ref{s-pre} will provide a review of relevant and necessary background materials. In Section \ref{s-low} we compute the expected value of the number of figure-eight closed geodesics of length $\leq L$ over $\sM_g$ which will imply \eqref{mt-low}, i.e., the lower bound in Theorem \ref{mt-1}. In Section \ref{s-upp} we prove \eqref{mt-upp}, i.e., the upper bound in Theorem \ref{mt-1}. In which we apply the counting result on closed geodesics in \cite{WX22-GAFA}, and also apply the McShane-Mirzakhani identity in \cite{Mirz07} to count closed geodesics for $C_{1,2}(X,L)$ and $C_{0,4}(X,L)$.\\

\subsection*{Acknowledgement} We would like to thank all the participants in our seminar on Teichm\"uller theory for helpful discussions on this project. The second-named author is supported by the NSFC grant No. $12401081$. The third-named author is partially supported by the NSFC grant No. $12171263$ and $12361141813$.

\section{Preliminaries}\label{s-pre}

In this section, we set our notations and recall certain relevant necessary results used in this paper, including the Weil-Petersson metric, Mirzakhani's
integration formula, bounds on \wep \ volumes, figure-eight closed geodesics and three counting results on closed geodesics.

\subsection{Moduli space and Weil-Petersson metric}
Denote by $S_{g,n}$ an oriented topological surface with genus $g$ of $n$ punctures or boundaries where $2g-2+n\geq 1$. Let $\T_{g,n}$ be the Teichm\"uller space of surfaces with genus $g$ of $n$ punctures, and $\Mod_{g,n}$ be the mapping class group of $S_{g,n}$ fixing the order of punctures. The moduli space of Riemann surfaces is $\M_{g,n}=\T_{g,n}/\Mod_{g,n}$. Write $\T_{g}=\T_{g,0}$ and $\M_{g}=\M_{g,0}$ for simplicity. Given $L=(L_1,\cdots,L_n)\in \R^n_{\geq 0}$, let $\T_{g,n}(L)$ be the Teichm\"uller space of bordered hyperbolic surfaces with $n$ geodesic boundaries of lengths $L_1,\cdots,L_n$, and $\M_{g,n}(L)=\T_{g,n}(L)/\Mod_{g,n}$ be the weighted moduli space. In particular, $\T_{g,n}(0,\cdots,0)=\T_{g,n}$ and $\sM_{g,n}(0,\cdots,0)=\sM_{g,n}$.

Given a pants decomposition $\{\alpha_i\}_{i=1}^{3g-3+n}$ of $S_{g,n}$, the Fenchel-Nielsen coordinates on the Teichm\"uller space $\T_{g,n}(L)$ is given by the map $X\mapsto (\ell_{\alpha_i}(X),\tau_{\alpha_i}(X))_{i=1}^{3g-3+n}$. Here $\ell_{\alpha_i}(X)$ is the length of $\alpha_i$ on $X$ and $\tau_{\alpha_i}(X)$ is the twist along $\alpha_i$ (measured by length). The following magic formula is due to Wolpert \cite{Wolpert82}:

\begin{theorem}[Wolpert]\label{thm Wolpert}
	The Weil-Petersson symplectic form $\omega_{\mathrm{WP}}$ on $\T_{g,n}(L)$ is given by 
	$$\omega_{\mathrm{WP}} = \sum_{i=1}^{3g-3+n} d\ell_{\alpha_i} \wedge d\tau_{\alpha_i}.$$
\end{theorem}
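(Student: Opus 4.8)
The plan is to compute $\omega_{\mathrm{WP}}$ against the basis of Fenchel--Nielsen vector fields associated with the fixed pants decomposition $\{\alpha_i\}_{i=1}^{N}$, $N=3g-3+n$, and to read off that its matrix is the standard symplectic one. Abbreviating $\partial_{\ell_i}=\partial/\partial\ell_{\alpha_i}$ and writing $\partial_{\tau_i}=\partial/\partial\tau_{\alpha_i}$ for the twist vector fields, a bilinear antisymmetric form on $\T_{g,n}(L)$ equals $\sum_{i}d\ell_{\alpha_i}\wedge d\tau_{\alpha_i}$ if and only if
\[
\omega_{\mathrm{WP}}(\partial_{\tau_i},\partial_{\tau_j})=0,\qquad
\omega_{\mathrm{WP}}(\partial_{\ell_i},\partial_{\tau_j})=\delta_{ij},\qquad
\omega_{\mathrm{WP}}(\partial_{\ell_i},\partial_{\ell_j})=0 .
\]
So I would reduce the theorem to these three families of pairings.

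The technical heart is Wolpert's twist--length duality: for every simple closed geodesic $\gamma$ the twist field $\partial_{\tau_\gamma}$ is the symplectic gradient of the length, $\iota_{\partial_{\tau_\gamma}}\omega_{\mathrm{WP}}=-\,d\ell_\gamma$ (the sign fixed by our orientation convention). I would prove this by representing the infinitesimal Fenchel--Nielsen twist by its harmonic Beltrami differential, pairing it against holomorphic quadratic differentials, and identifying the resulting period integral with Gardiner's variational formula for $d\ell_\gamma$; this is the analytic input and I expect it to be the \emph{main obstacle}. Granting it, the first two families are immediate. For the mixed pairing, $\omega_{\mathrm{WP}}(\partial_{\ell_i},\partial_{\tau_j})=-\iota_{\partial_{\tau_j}}\omega_{\mathrm{WP}}(\partial_{\ell_i})=d\ell_{\alpha_j}(\partial_{\ell_i})=\partial\ell_{\alpha_j}/\partial\ell_{\alpha_i}=\delta_{ij}$. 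For the twist--twist pairing, $\omega_{\mathrm{WP}}(\partial_{\tau_i},\partial_{\tau_j})=\iota_{\partial_{\tau_i}}\omega_{\mathrm{WP}}(\partial_{\tau_j})=-d\ell_{\alpha_i}(\partial_{\tau_j})=-\partial\ell_{\alpha_i}/\partial\tau_{\alpha_j}$, which vanishes because the pants curves are pairwise disjoint and Wolpert's cosine formula writes this derivative as a sum of $\cos(\text{angle})$ over the empty intersection $\alpha_i\cap\alpha_j$.

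The remaining family $\omega_{\mathrm{WP}}(\partial_{\ell_i},\partial_{\ell_j})=0$ does \emph{not} follow formally from duality and closedness, and is the conceptually delicate point. After the previous step we may write $\omega_{\mathrm{WP}}=\eta+\sum_{i}d\ell_{\alpha_i}\wedge d\tau_{\alpha_i}$ with $\eta=\sum_{i<j}a_{ij}\,d\ell_{\alpha_i}\wedge d\ell_{\alpha_j}$, and two inputs force $\eta=0$. First, the twist flows preserve $\omega_{\mathrm{WP}}$, since by Cartan's formula $\mathcal{L}_{\partial_{\tau_k}}\omega_{\mathrm{WP}}=d\,\iota_{\partial_{\tau_k}}\omega_{\mathrm{WP}}=d(-d\ell_{\alpha_k})=0$ using $d\omega_{\mathrm{WP}}=0$; hence each $a_{ij}$ is independent of all twist parameters and is a function of the lengths alone. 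Second, I would invoke the mirror (complex-conjugation) involution $\sigma$ of $\T_{g}$, an anti-holomorphic Weil--Petersson isometry that in these coordinates fixes the lengths and reverses the twists, so $\sigma^{*}d\ell_{\alpha_i}=d\ell_{\alpha_i}$ and $\sigma^{*}d\tau_{\alpha_i}=-d\tau_{\alpha_i}$; reversing the complex structure of the K\"ahler manifold $(\T_g,\omega_{\mathrm{WP}})$ gives $\sigma^{*}\omega_{\mathrm{WP}}=-\omega_{\mathrm{WP}}$. Comparing $\sigma^{*}\eta=\eta$ (as $\eta$ is assembled from $\sigma$-invariant forms with length-dependent coefficients) against the requirement $\sigma^{*}\omega_{\mathrm{WP}}=-\omega_{\mathrm{WP}}$ yields $\eta=-\eta$, so $\eta=0$. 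Assembling the three families then gives $\omega_{\mathrm{WP}}=\sum_{i=1}^{N}d\ell_{\alpha_i}\wedge d\tau_{\alpha_i}$, as claimed.
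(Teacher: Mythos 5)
The paper offers no proof of this statement: it is quoted as background from Wolpert \cite{Wolpert82}, so there is no internal argument to compare against. Judged on its own, your outline is correct and is essentially the classical route in the literature (Wolpert's 1982--83 papers, as also presented in standard surveys): reduce to the pairings of $\omega_{\mathrm{WP}}$ on the Fenchel--Nielsen coordinate fields; feed in twist--length duality $\iota_{\partial_{\tau_\gamma}}\omega_{\mathrm{WP}}=-d\ell_\gamma$ to get the mixed pairings; kill the twist--twist pairings with the cosine formula (trivially, since pants curves are disjoint); and kill the residual length--length block $\eta=\sum_{i<j}a_{ij}\,d\ell_{\alpha_i}\wedge d\ell_{\alpha_j}$ by combining twist-invariance of $\omega_{\mathrm{WP}}$ (Cartan's formula plus closedness, which forces $a_{ij}$ to depend on lengths alone) with the anti-holomorphic mirror involution satisfying $\sigma^*\omega_{\mathrm{WP}}=-\omega_{\mathrm{WP}}$. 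Your sign conventions are internally consistent, and your identification of the genuinely hard input is accurate: the entire analytic content sits in the duality formula, whose proof via harmonic Beltrami representatives of the twist field paired against quadratic differentials and Gardiner's variational formula for $d\ell_\gamma$ is the actual path in Wolpert's work, and you correctly note that $\omega_{\mathrm{WP}}(\partial_{\ell_i},\partial_{\ell_j})=0$ does not follow formally from duality.

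Two small points deserve a sentence each in a full write-up. First, the twist coordinates $\tau_{\alpha_i}$ are only canonical up to additive constants (a choice of base sections, e.g.\ via seams of the pants), so the mirror involution acts as $\tau_i\mapsto c_i-\tau_i$ for constants $c_i$ rather than exactly $\tau_i\mapsto -\tau_i$; this is harmless since $\sigma^*d\tau_{\alpha_i}=-d\tau_{\alpha_i}$ still holds, but it should be said, and the existence of $\sigma$ as a genuine anti-holomorphic Weil--Petersson isometry compatible with the given pants decomposition requires choosing the marking via an orientation-reversing reflection adapted to that decomposition. Second, the statement here is for the bordered spaces $\T_{g,n}(L)$ with fixed boundary lengths, whereas the classical results are stated for closed or punctured surfaces; the extension (duality, closedness, and the mirror symmetry restricted to the level set of the boundary-length functions) is standard but is an additional step you should acknowledge rather than inherit silently.
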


The form $\omega_{\mathrm{WP}}$ is mapping class group invariant. So it induces the so-called \emph{Weil-Petersson volume form} on $\sM_{g,n}$ given by 
$$d\Vol\nolimits_{\mathrm{WP}}=\tfrac{1}{(3g-3+n)!}\cdot \underbrace{\omega_{\mathrm{WP}}\wedge\cdots\wedge\omega_{\mathrm{WP}}}_{3g-3+n\ \text{copies}}.$$
Denote by $V_{g,n}(L)$ the total volume of $\M_{g,n}(L)$ under the Weil-Petersson metric which is finite. Following \cite{Mirz13}, we view a measurable function $f:\sM_g\to \R$ as a random variable on $\sM_g$ with respect to the probability measure $\Prob$ on $\M_g$, and we also denote by $\E[f]$ the expected value of $f$ over $\sM_g$. Namely,
$$\Prob(\mathcal{A}):=\frac{1}{V_{g}}\int_{\M_{g}} \mathbf{1}_{\mathcal{A}} dX, \quad \E[f]:=\frac{1}{V_g}\int_{\sM_g}f(X)dX$$
where $\mathcal{A}\sbs\M_g$ is a Borel subset, $\mathbf{1}_{\mathcal{A}}:\M_{g}\to\{0,1\}$ is its characteristic function, and $dX$ is short for $d\Vol_{\mathrm{WP}}(X)$.

\subsection{Mirzakhani's integration formula}\label{sec mirz int fml}
In this subsection, we recall Mirzakhani's integration formula in \cite{Mirz07}.

Let $\gamma$ be a non-trivial and non-peripheral closed curve on the topological surface $S_{g,n}$ and $X\in\T_{g,n}$. Denote $\ell(\gamma)=\ell_\gamma(X)$ to be the hyperbolic length of the unique closed geodesic in
the homotopy class of $\gamma$ on $X$. Let $\Gamma=(\gamma_1,\cdots,\gamma_k)$ be an ordered k-tuple where the $\gamma_i$'s are distinct disjoint homotopy classes of nontrivial, non-peripheral, unoriented simple closed curves on $S_{g,n}$. Let $\mathcal{O}_\Gamma$ be the orbit containing $\Gamma$ under the $\Mod_{g,n}$-action, i.e.
$$\mathcal{O}_\Gamma = \{(h\cdot\gamma_1,\cdots,h\cdot\gamma_k);\ h\in\Mod\nolimits_{g,n}\}.$$
Given a function $F:\R_{\geq0}^k \to\R$, one may define a function on $\M_{g,n}$:
\begin{eqnarray*}
	F^\Gamma: \M_{g,n} &\to& \R \\
	X &\mapsto& \sum_{(\alpha_1,\cdots,\alpha_k)\in\mathcal{O}_\Gamma} F(\ell_{\alpha_1}(X),\cdots,\ell_{\alpha_k}(X)).
\end{eqnarray*}
Note that although $\ell_{\alpha_i}(X)$ can be only defined on $\T_{g,n}$, after taking a sum over all $(\alpha_1,\cdots,\alpha_k)\in\mathcal{O}_\Gamma$, the function $F^\Gamma$ is well-defined on the moduli space $\M_{g,n}$.

For any $x=(x_1,\cdots,x_k)\in\R_{\geq 0}^k$, we denote by $\M(S_{g,n}(\Gamma);\ell_{\Gamma}=x)$ the moduli space of the hyperbolic surfaces (possibly disconnected) that homeomorphic to $S_{g,n}\setminus\bigcup_{j=1}^k\gamma_j$ with $\ell(\gamma_i^1)=\ell(\gamma_i^2)=x_i$ for every $i=1,\cdots,k$, where $\gamma_i^1$ and $\gamma_i^2$ are the two boundary components of $S_{g,n}\setminus\bigcup_{j=1}^k\gamma_j$ given by cutting along $\gamma_i$. Assume $S_{g,n}\setminus\bigcup_{j=1}^k\gamma_j \cong \bigcup_{i=1}^s S_{g_i,n_i}$. Consider the Weil-Petersson volume
\begin{align}\label{e-wpvol}
V_{g,n}(\Gamma,x)=\Vol\nolimits_{\mathrm{WP}}\left(\M(S_{g,n}(\Gamma);\ell_{\Gamma}=x)\right) = \prod_{i=1}^s V_{g_i,n_i}(x^{(i)})
\end{align}
where $x^{(i)}$ is the list of those coordinates $x_j$ of $x$ such that $\gamma_j$ is a boundary component of $S_{g_i,n_i}$. The following integration formula is due to Mirzakhani. One may see \cite[Theorem 7.1]{Mirz07}, \cite[Theorem 2.2]{Mirz13}, \cite[Theorem 2.2]{MP19} and \cite[Theorem 4.1]{Wright-tour} for different versions.

\begin{theorem}[Mirzakhani's integration formula]\label{thm Mirz int formula}
	For any $\Gamma=(\gamma_1,\cdots,\gamma_k)$, the integral of $F^\Gamma$ over $\mathcal{M}_{g,n}$ with respect to the Weil-Petersson metric is given by
	$$\int_{\M_{g,n}} F^\Gamma dX = C_\Gamma\int_{\R_{\geq 0}^k}F(x_1,\cdots,x_k)V_{g,n}(\Gamma,x)x_1\cdots x_k dx_1\cdots dx_k$$
	where the constant $C_\Gamma\in(0,1]$ only depends on $\Gamma$. 
\end{theorem}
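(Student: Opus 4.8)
The plan is to establish the formula by the standard unfolding argument over an intermediate cover of $\M_{g,n}$, combined with Wolpert's formula and an explicit integration over the twist parameters. The key structural fact is that the mapping-class-group sum defining $F^\Gamma$ is exactly a pushforward from a cover on which the length functions $\ell_{\gamma_i}$ become single-valued.

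First I would introduce the intermediate moduli space $\M_{g,n}^\Gamma := \T_{g,n}/\mathrm{Stab}(\Gamma)$, where $\mathrm{Stab}(\Gamma)\leq \Mod_{g,n}$ is the setwise stabilizer of the ordered tuple $\Gamma$. On $\M_{g,n}^\Gamma$ the functions $\ell_{\gamma_i}$ descend, so $\widehat{F}(Y):=F(\ell_{\gamma_1}(Y),\cdots,\ell_{\gamma_k}(Y))$ is well defined there. The natural covering $\pi:\M_{g,n}^\Gamma\to \M_{g,n}$ has fibers in bijection with the coset space $\mathrm{Stab}(\Gamma)\backslash\Mod_{g,n}$, hence with the orbit $\mathcal{O}_\Gamma$; summing $\widehat{F}$ over a fiber reproduces $F^\Gamma$. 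Since $\omega_{\mathrm{WP}}$ is $\Mod_{g,n}$-invariant, $\pi$ is a local isometry for the Weil–Petersson volume form, so the unfolding identity
$$\int_{\M_{g,n}}F^\Gamma\,dX=\int_{\M_{g,n}^\Gamma}\widehat{F}\,dX$$
holds.

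Next I would evaluate the right-hand integral in Fenchel–Nielsen coordinates adapted to $\Gamma$. Extend $\Gamma$ to a pants decomposition $\{\alpha_i\}_{i=1}^{3g-3+n}$ with $\alpha_i=\gamma_i$ for $1\le i\le k$. By Theorem~\ref{thm Wolpert} the volume form factors as
$$dX=\Big(\prod_{i=1}^k d\ell_{\gamma_i}\,d\tau_{\gamma_i}\Big)\wedge\big(\text{volume form in the remaining coordinates}\big).$$
Because $\widehat{F}$ depends only on $(\ell_{\gamma_1},\cdots,\ell_{\gamma_k})$, I would fix the level set $\ell_\Gamma=x$ and integrate out the other variables. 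The Dehn twists along the $\gamma_i$ lie in $\mathrm{Stab}(\Gamma)$ and act by $\tau_{\gamma_i}\mapsto\tau_{\gamma_i}+\ell_{\gamma_i}$, so each twist coordinate is cyclic of period $\ell_{\gamma_i}=x_i$, contributing a factor $\prod_{i=1}^k x_i$; the remaining coordinates sweep out the moduli space of the cut surface $S_{g,n}\setminus\cup_j\gamma_j$ with prescribed boundary lengths, of volume $V_{g,n}(\Gamma,x)=\prod_{i=1}^s V_{g_i,n_i}(x^{(i)})$. This produces the stated integrand up to a multiplicative constant.

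Finally I would identify the constant $C_\Gamma$. The discrepancy between $\mathrm{Stab}(\Gamma)$ and the subgroup generated by the Dehn twists together with the mapping classes supported on the complement comes entirely from symmetries of $\Gamma$: mapping classes that swap the two sides of some $\gamma_i$ (equivalently, interchange the two boundary copies arising from cutting), realized by half-twists. Each such involution halves the corresponding fiber, yielding a factor of the form $2^{-M(\Gamma)}$ with $M(\Gamma)\ge 0$ depending only on the topological type of the ordered tuple $\Gamma$; hence $C_\Gamma\in(0,1]$ and is independent of $x$. \emph{The main obstacle} is exactly this bookkeeping: one must verify that $\mathrm{Stab}(\Gamma)$ is generated by the Dehn twists along the $\gamma_i$, the mapping classes of the cut surface, and these finitely many side-swapping symmetries, and then check that the induced quotient of the twist torus (a product of circles of circumferences $x_i$) together with any identifications on the cut moduli space contributes precisely the constant factor $C_\Gamma$ and introduces no further $x$-dependence.
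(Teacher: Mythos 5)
The paper does not prove this theorem at all --- it is imported from Mirzakhani's work, with \cite{Mirz07,Mirz13,MP19,Wright-tour} cited for proofs and for the meaning of $C_\Gamma$ --- so the relevant comparison is with the standard proof in those sources, and your proposal is exactly that proof: unfold the orbit sum over the intermediate cover $\T_{g,n}/\mathrm{Stab}(\Gamma)$, apply Wolpert's formula in Fenchel--Nielsen coordinates adapted to $\Gamma$, integrate out the twist circles of circumference $x_i$ to produce the factor $x_1\cdots x_k$ and the cut-surface volume $V_{g,n}(\Gamma,x)$, and absorb the residual symmetries into $C_\Gamma$.

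The one step you flag as ``the main obstacle'' is genuinely the only substantive verification remaining, but it does not fail and is in fact standard, and it is worth noting that less than what you ask for suffices. You do not need an explicit generating set for $\mathrm{Stab}(\Gamma)$: writing $G_0\leq\mathrm{Stab}(\Gamma)$ for the subgroup generated by the twists $T_{\gamma_i}$ and the mapping classes of the cut surface, the quotient $\T_{g,n}/G_0$ is precisely the twist-torus bundle you computed, and the theorem as stated (with $C_\Gamma\in(0,1]$ unspecified, independent of $x$) only requires that $G_0$ have \emph{finite index} in $\mathrm{Stab}(\Gamma)$, since then $C_\Gamma=[\mathrm{Stab}(\Gamma):G_0]^{-1}$. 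Finite index follows because any element of $\mathrm{Stab}(\Gamma)$ induces a permutation of the finitely many labeled gluing data (complementary pieces, and the two sides of each $\gamma_i$), and the kernel of this action lies in $G_0$ by the standard cutting homomorphism: a mapping class fixing each $\gamma_i$ together with its sides induces a mapping class of the cut surface, and the kernel of $\mathrm{Stab}^{+}(\Gamma)\to\Mod(\text{cut surface})$ is generated by the Dehn twists $T_{\gamma_i}$ (Farb--Margalit's primer, Proposition 3.20). Two small corrections to your bookkeeping: the canonical source of a factor $\tfrac12$ is not a literal side swap but the case where $\gamma_i$ bounds a one-holed torus, where the elliptic involution (which acts trivially on $\M_{1,1}(x_i)$) combines with a half-twist $\tau_i\mapsto\tau_i+x_i/2$, yielding $C_\Gamma=2^{-M(\Gamma)}$ with $M(\Gamma)$ the number of such $\gamma_i$; and the unfolding identity should be asserted in the orbifold sense, since the fibers of $\pi$ biject with $\mathcal{O}_\Gamma$ only away from the measure-zero locus of surfaces with nontrivial automorphisms. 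With these two remarks inserted, your argument is a complete and correct proof along the same lines as the cited one.
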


\begin{rem*}
	One may see \cite[Theorem 4.1]{Wright-tour} for the detailed explanation and expression for the constant $C_\Gamma$. We will give the exact value of $C_\Gamma$ only when required in this paper.
\end{rem*}

\subsection{Weil-Petersson volumes}
Let $V_{g,n}(x_1,\cdots,x_n)$ be the Weil-Petersson volume of $\M_{g,n}(x_1,\cdots,x_n)$ and $V_{g,n}=V_{g,n}(0,\cdots,0)$. In this subsection, we only list the bounds for $V_{g,n}(x_1,\cdots,x_n)$ that we will need in this paper.

\begin{theorem}[{\cite[Theorem 1.1]{Mirz07}}]\label{mirz07}
	The initial volume $V_{0,3}(x,y,z) = 1$. The volume $V_{g,n}(x_1,\cdots,x_n)$ is a polynomial in $x_1^2,\cdots,x_n^2$ with degree $3g-3+n$. Namely we have
	\begin{equation*}
		V_{g,n}(x_1,\cdots,x_n) = \sum_{\alpha;\,|\alpha|\leq 3g-3+n} C_\alpha \cdot x^{2\alpha}
	\end{equation*}
	where $C_\alpha>0$ lies in $\pi^{6g-6+2n-|2\alpha|} \cdot \mathbb Q$. Here $\alpha=(\alpha_1,\cdots,\alpha_n)$ is a multi-index and $|\alpha|=\alpha_1+\cdots+\alpha_n$, $x^{2\alpha}= x_1^{2\alpha_1}\cdots x_n^{2\alpha_n}$.
\end{theorem}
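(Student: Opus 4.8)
The plan is to prove both assertions—polynomiality of degree $3g-3+n$ in the $x_i^2$, and the coefficient constraint $C_\alpha\in\pi^{6g-6+2n-|2\alpha|}\mathbb{Q}_{>0}$—simultaneously by induction on the complexity $d=3g-3+n$, using the McShane–Mirzakhani identity together with the unfolding supplied by Theorem \ref{thm Mirz int formula}. The base case $d=0$ is $S_{0,3}$: a hyperbolic pair of pants is rigid once its three boundary lengths are fixed, so $\M_{0,3}(x,y,z)$ is a single point and $V_{0,3}(x,y,z)=1$, the constant polynomial whose only coefficient is $C_0=1\in\pi^0\mathbb{Q}_{>0}$. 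The remaining minimal case $S_{1,1}$ is handled in the same spirit, taking its elliptic involution into account.

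For the inductive step, fix a boundary $\beta_1$ of length $L_1$ and apply the McShane–Mirzakhani identity, which decomposes the constant function $L_1$ on $\T_{g,n}(L)$ as a sum, over embedded pairs of pants having $\beta_1$ as one cuff, of explicit kernels $\mathcal{D}(L_1,x,y)$ and $\mathcal{R}(L_1,L_j,x)$ in the lengths of the interior cuffs. Integrating this identity over $\M_{g,n}(L)$ and unfolding each $\Mod_{g,n}$-orbit sum with Theorem \ref{thm Mirz int formula} turns it into Mirzakhani's topological recursion, expressing $L_1V_{g,n}(L)$ as a finite sum of one- and two-dimensional integrals whose integrands are $x\,\mathcal{R}(L_1,L_j,x)$ or $xy\,\mathcal{D}(L_1,x,y)$ multiplied by volumes of the strictly simpler surfaces obtained by cutting along the two interior cuffs.

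The arithmetic heart of the argument is the kernel lemma: for each $k\ge 0$ the moments $\int_0^\infty x^{2k+1}\mathcal{D}(t,x,y)\,dx$ and $\int_0^\infty x^{2k+1}\mathcal{R}(t,s,x)\,dx$ are even polynomials in the remaining length variables, of degree $2k+2$, whose coefficients are positive rational combinations of the values $\zeta(2m)$; since $\zeta(2m)\in\pi^{2m}\mathbb{Q}_{>0}$, every such coefficient lies in $\pi^{(\mathrm{even})}\mathbb{Q}_{>0}$. Feeding the induction hypothesis—each simpler volume is an even polynomial of the correct degree with coefficients in the prescribed $\pi$-rational class—into the recursion term by term, one integrates the resulting monomials $x^{2a}y^{2b}$ against these kernels; the kernel lemma then shows each summand is an even polynomial in $L_1,\dots,L_n$ whose total degree in the squared variables is exactly $d$ and whose coefficients carry the asserted $\pi$-weight. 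Dividing the expression for $L_1V_{g,n}(L)$ by $L_1$ (it is divisible, the constant term in $L_1$ vanishing) returns $V_{g,n}(L)$ with the claimed structure. Positivity of every $C_\alpha$ is then automatic: the kernels $\mathcal{D},\mathcal{R}$ are positive, the $\zeta(2m)$ are positive, and by induction the simpler volumes have positive coefficients, so no cancellation can occur.

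The main obstacle is the kernel lemma together with its bookkeeping: one must compute the moment integrals of the explicit McShane kernels precisely enough to pin down both the polynomial degree and the exact power of $\pi$ contributed at each stage, and then verify that dividing by $L_1$ and re-summing over the finitely many topological types of cut preserves the degree $d$ and the coefficient class $\pi^{6g-6+2n-|2\alpha|}\mathbb{Q}$. A conceptually cleaner route that renders the $\pi$-structure transparent is to invoke Mirzakhani's symplectic-reduction identity $V_{g,n}(L)=\int_{\overline{\M}_{g,n}}\exp\!\big(2\pi^2\kappa_1+\tfrac12\sum_i L_i^2\psi_i\big)$; expanding the exponential exhibits $C_\alpha$ as a positive rational multiple of $(2\pi^2)^{\,d-|\alpha|}\int_{\overline{\M}_{g,n}}\psi_1^{\alpha_1}\cdots\psi_n^{\alpha_n}\kappa_1^{\,d-|\alpha|}$, from which the degree, the $\pi$-weight $6g-6+2n-2|\alpha|$, and (by positivity of these tautological intersection numbers) the sign all follow at once.
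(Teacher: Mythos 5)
The paper does not prove this statement: Theorem \ref{mirz07} is imported verbatim from \cite[Theorem 1.1]{Mirz07}, so there is no internal proof to compare against, and the relevant benchmark is Mirzakhani's original argument. Your outline is, in substance, exactly that argument: induction on $d=3g-3+n$, with the McShane--Mirzakhani identity (Theorem \ref{thm McShane id}) unfolded by the integration formula (Theorem \ref{thm Mirz int formula}) into the topological recursion, plus moment computations for the kernels $\mathcal D,\mathcal R$; and your ``cleaner route'' via $V_{g,n}(L)=\int_{\overline{\M}_{g,n}}\exp\big(2\pi^2\kappa_1+\tfrac12\sum_i L_i^2\psi_i\big)$ is Mirzakhani's companion theorem from \cite{Mirz07-int}, which is proven by symplectic reduction and Duistermaat--Heckman independently of the recursion, so invoking it for the $\pi$-weight and positivity is legitimate and not circular.

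There is, however, one genuine flaw in your kernel lemma as stated: the single moment $\int_0^\infty x^{2k+1}\mathcal D(t,x,y)\,dx$ is \emph{not} a polynomial in the remaining variables. For fixed $t>0$ one has $0<\mathcal D(t,x,y)\leq 2\big(e^{t/2}-e^{-t/2}\big)e^{-(x+y)/2}$ (since $\log(1+u)\leq u$), so this moment is positive and tends to $0$ as $y\to\infty$, which no nonzero polynomial in $y$ can do. What is true, and what the recursion actually requires --- because the $\mathcal D$-term always enters integrated against $xy$ times the inductively even-polynomial volumes of the cut pieces --- is polynomiality of the \emph{joint} moments. One has $\tfrac{\partial}{\partial t}\mathcal D(t,x,y)=H(x+y,t)$ for an explicit kernel $H$, and the substitution $u=x+y$ together with the Beta integral
\begin{equation*}
\int_0^u x^{2a+1}(u-x)^{2b+1}\,dx=\frac{(2a+1)!\,(2b+1)!}{(2a+2b+3)!}\,u^{2a+2b+3}
\end{equation*}
reduces $\int_0^\infty\!\int_0^\infty x^{2a+1}y^{2b+1}\,\partial_t\mathcal D(t,x,y)\,dx\,dy$ to a positive rational multiple of $F_{2a+2b+3}(t)=\int_0^\infty u^{2a+2b+3}H(u,t)\,du$, which is an even polynomial in $t$ of degree $2a+2b+4$ whose subleading coefficients are positive rational multiples of $\zeta(2i)\in\pi^{2i}\mathbb{Q}_{>0}$. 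Since $\mathcal D(0,\cdot,\cdot)=\mathcal R(0,\cdot,\cdot)=0$, integrating back in $t$ from $0$ yields odd polynomials in $L_1$, which justifies the division by $L_1$ and the parity you assert; your statement for $\mathcal R$ is fine as written, since $\partial_t\mathcal R(t,s,x)$ is a combination of $H(x,t\pm s)$ and the single $x$-moment is genuinely polynomial in $(t,s)$. With the lemma restated in this joint-moment form, your induction closes, and the degree count, the coefficient class $C_\alpha\in\pi^{6g-6+2n-2|\alpha|}\mathbb{Q}$, and the no-cancellation positivity argument all go through as you describe.
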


\begin{theorem}\label{thm Vgn/Vgn+1}
	\begin{enumerate}		
		\item {\rm{(\cite[Lemma 3.2]{Mirz13}).}}
		For any $g,n\geq 0$
		\begin{equation*}
			V_{g-1,n+4} \leq V_{g,n+2}
		\end{equation*}
		and
		\begin{equation*}
			b_0\leq \frac{V_{g,n+1}}{(2g-2+n)V_{g,n}} \leq b_1
		\end{equation*}
		for some universal constants $b_0,b_1>0$ independent of $g,n$.
		
		\item {\rm{(\cite[Theorem 3.5]{Mirz13}).}}
		$$\frac{(2g-2+n)V_{g,n}}{V_{g,n+1}} = \frac{1}{4\pi^2} + O_n\left(\frac{1}{g}\right),$$
		$$\frac{V_{g,n}}{V_{g-1,n+2}} = 1+O_n\left(\frac{1}{g}\right),$$
		where the implied constants for $O_n(\cdot)$ are related to $n$ and independent of $g$.
	\end{enumerate}
\end{theorem}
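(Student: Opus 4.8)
The plan is to treat these as Mirzakhani's asymptotic volume estimates and to derive them from two engines: the polynomial structure of the volumes recorded in Theorem~\ref{mirz07} (in particular that every coefficient $C_\alpha$ is strictly positive), and Mirzakhani's topological recursion for the functions $V_{g,n}(L_1,\dots,L_n)$ obtained by integrating the McShane--Mirzakhani identity (the same identity underlying Theorem~\ref{thm Mirz int formula}). Alongside the recursion I would use the two ``boundary'' identities of Do--Norbury type, namely $V_{g,n+1}(L_1,\dots,L_n,2\pi i)=0$ and $\frac{\partial}{\partial L_{n+1}}V_{g,n+1}(L_1,\dots,L_n,2\pi i)=2\pi i\,(2g-2+n)\,V_{g,n}(L_1,\dots,L_n)$, which convert statements about $V_{g,n+1}$ into statements about $V_{g,n}$.

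For part (1), I would first prove the clean inequality $V_{g,m}\ge V_{g-1,m+2}$ (the case $m=n+2$ being the displayed bound), since it expresses the heuristic that cutting a non-separating curve accounts for the bulk of the volume. Writing Mirzakhani's recursion for the distinguished boundary and recalling that removing a pair of pants produces a dominant connected term governed by a lower-genus volume, one keeps this contribution and discards the remaining terms, all of which are nonnegative by the positivity of the coefficients $C_\alpha$ from Theorem~\ref{mirz07}; a comparison of the resulting volume-polynomial coefficients then yields the inequality. The two-sided bound $b_0\le \frac{V_{g,n+1}}{(2g-2+n)V_{g,n}}\le b_1$ I would extract from the derivative identity: expanding $V_{g,n+1}(\mathbf 0,L)$ in powers of $L^2$ and evaluating at $L=2\pi i$ writes $(2g-2+n)V_{g,n}$ as an alternating sum of the coefficients of $V_{g,n+1}$, and crude upper and lower bounds on the ratios of consecutive coefficients (obtained inductively from the recursion) sandwich the quotient between two universal constants.

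For part (2) the heart of the matter is the sharp constant, and here I would establish the limit $\frac{V_{g,n}(L_1,\dots,L_n)}{V_{g,n}}\longrightarrow \prod_{i=1}^n \frac{\sinh(L_i/2)}{L_i/2}$ as $g\to\infty$, uniformly for $L$ in compact sets; equivalently, that the normalized coefficients of the volume polynomials converge to those of the $\sinh$-kernel. Granting this, the first asymptotic is immediate from the derivative identity: since $\frac{d}{dL}\frac{\sinh(L/2)}{L/2}\big|_{L=2\pi i}=\frac{i}{2\pi}$ while $\frac{\sinh(L/2)}{L/2}$ itself vanishes at $L=2\pi i$, coefficientwise convergence lets one plug the $\sinh$-limit into $\frac{\partial}{\partial L}V_{g,n+1}(\mathbf 0,2\pi i)=2\pi i(2g-2+n)V_{g,n}$ to get $V_{g,n+1}\sim 4\pi^2(2g-2+n)V_{g,n}$, which is the stated ratio $\frac{(2g-2+n)V_{g,n}}{V_{g,n+1}}=\frac{1}{4\pi^2}+O_n(1/g)$. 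The second asymptotic $\frac{V_{g,n}}{V_{g-1,n+2}}=1+O_n(1/g)$ I would then obtain by applying the recursion to $V_{g,n+1}$ and isolating its connected term, which is a $\sinh$-weighted transform of $V_{g-1,n+2}$; evaluating the transform constants via the same $\sinh$-limit and comparing with the first asymptotic pins the ratio to $1$, with the lower direction already guaranteed by part (1).

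\textbf{Main obstacle.} The genuine difficulty is the uniform $\sinh$-limit, i.e.\ sharp control of the ratios of consecutive coefficients of $V_{g,n}(L)$ uniformly in $g$ (and in $n$, up to the $O_n$ dependence). This requires running Mirzakhani's recursion as an induction on the normalized intersection numbers and showing both that the leading term dominates and that the error it generates is $O(1/g)$; keeping the geometric-series-type error terms summable, and ensuring the constants do not degenerate as the number of boundaries grows, is the delicate book-keeping on which everything else rests.
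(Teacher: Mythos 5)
The paper does not prove this theorem at all: both parts are imported verbatim from Mirzakhani \cite[Lemma 3.2, Theorem 3.5]{Mirz13}, with the remark that Part (2) also follows from the Mirzakhani--Zograf asymptotic expansion of $V_{g,n}$ in \cite{MZ15}. Measured against the cited source, your toolbox is the right one: positivity of the coefficients $C_\alpha$ from Theorem \ref{mirz07}, Mirzakhani's recursion, and the two evaluations at $L_{n+1}=2\pi i$ (the vanishing and the derivative identity) are exactly what drive the original proofs, and your computation $\frac{d}{dL}\frac{\sinh(L/2)}{L/2}\big|_{L=2\pi i}=\frac{i}{2\pi}$ does recover the constant $\frac{1}{4\pi^2}$ correctly. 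So the proposal is a faithful outline of Mirzakhani's route rather than a genuinely different one.

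There are, however, two concrete gaps. First, the pivot ``uniform $\sinh$-limit on compact sets, equivalently coefficientwise convergence'' is not an equivalence: the polynomials $V_{g,n+1}(0,\dots,0,L)/V_{g,n+1}$ have degree $3g-2+n\to\infty$, and uniform convergence on a real compact set neither forces convergence of individual coefficients nor controls evaluation at $L=2\pi i$, where the $k$-th coefficient is weighted by $(2\pi)^{2k}$ with alternating signs. What is actually needed --- and what \cite[Lemma 3.4]{Mirz13} and \cite{MZ15} prove by running the recursion on the intersection-number expressions of the coefficients --- is the quantitative estimate $\left[\tau_{k_1}\cdots\tau_{k_n}\right]_{g,n}/V_{g,n}=\prod_i 2^{-2k_i}\left((2k_i+1)!!\right)^{-1}\left(1+O(1/g)\right)$ with errors uniform in $k$ and summable against the $(2\pi)^{2k}$ weights; without it the high-degree tail could destroy the alternating evaluation at $2\pi i$. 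Since this is the entire analytic content of Part (2) (and of the sandwich $b_0\le V_{g,n+1}/((2g-2+n)V_{g,n})\le b_1$), deferring it to ``delicate book-keeping'' leaves the crux unproved. Second, your mechanism for $V_{g-1,n+4}\le V_{g,n+2}$ is off-target as stated: one application of the recursion to $V_{g,n+2}$ produces in its connected (non-separating) term an integral transform of $V_{g-1,n+3}(x,y,\cdot)$, whose Euler characteristic is one \emph{less} than that of $V_{g,n+2}$, whereas $V_{g-1,n+4}$ sits at the \emph{same} level $2g-2+n+2$ as $V_{g,n+2}$; so ``keep the dominant connected term and compare coefficients'' cannot directly yield the claimed inequality. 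One would instead have to compare the recursions of $V_{g,n+2}$ and $V_{g-1,n+4}$ termwise by induction, or argue through the intersection-number identities as in \cite{Mirz13, MZ15}; this extra step should be supplied.
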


\noindent Part (2) above can also be derived by \cite{MZ15} of Mirzakhani-Zograf in which the precise asymptotic behavior of $V_{g,n}$ is provided for given $n$.

Set $r=2g-2+n$. We also use the following quantity $W_r$ to approximate $V_{g,n}$: 
\begin{equation}
	W_{r}:=
	\begin{cases}
		V_{\frac{r}{2}+1,0}&\text{if $r$ is even},\\[5pt]
		V_{\frac{r+1}{2},1}&\text{if $r$ is odd}.
	\end{cases}
\end{equation}

\noindent The estimation about the sum of products of Weil-Petersson volumes can be found in e.g. \cite{Mirz13, MP19, GMST19, NWX23}. Here we use the following version:
\begin{theorem}[{\cite[Lemma 24]{NWX23}}]\label{thm sum-prod-V} 
Assume $q\geq 1$, $n_1,\cdots,n_q\geq 0$, $r\geq2$. Then there exist two universal constants $c,D>0$ such that
\begin{equation*}
	\sum_{\{g_i\}} V_{g_1,n_1}\cdots V_{g_q,n_q} \leq c \left(\frac{D}{r}\right)^{q-1} W_r,
\end{equation*}
where the sum is taken over all $\{g_i\}_{i=1}^q \sbs \N$ such that $2g_i-2+n_i \geq 1$ for all $i=1,\cdots,q$, and $\sum_{i=1}^q (2g_i-2+n_i) = r$. 
\end{theorem}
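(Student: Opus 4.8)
The plan is to reduce the genus-partition sum to a one-dimensional convolution of the numbers $W_r$ and then to exploit their factorial growth. Writing $r_i=2g_i-2+n_i$, the constraints become $r_i\geq 1$ and $\sum_{i=1}^q r_i=r$. First I would replace each factor by $W_{r_i}$: iterating the inequality $V_{g-1,n+4}\leq V_{g,n+2}$ from Theorem \ref{thm Vgn/Vgn+1}(1) shows that among all signatures $(g,n)$ with a fixed value of $2g-2+n$, the volume is largest for the one of maximal genus, i.e. with $n\in\{0,1\}$; since $r_i\equiv n_i\pmod 2$ this maximizer is exactly $W_{r_i}$, so $V_{g_i,n_i}\leq W_{r_i}$. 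Dropping the parity restrictions on the $r_i$ only enlarges the range of summation, so it suffices to bound $\sum_{r_1+\cdots+r_q=r,\ r_i\geq 1}\prod_{i=1}^q W_{r_i}$ by $c(D/r)^{q-1}W_r$.

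Next I would pin down the growth of $W_r$. Theorem \ref{thm Vgn/Vgn+1}(2) gives the \emph{precise} ratios $\tfrac{(2g-2+n)V_{g,n}}{V_{g,n+1}}=\tfrac{1}{4\pi^2}+O(1/g)$ and $\tfrac{V_{g,n}}{V_{g-1,n+2}}=1+O(1/g)$; since only finitely many values of $n$ (namely $n\leq 2$) enter the comparison of consecutive $W$'s, these combine into $\tfrac{W_{r+1}}{W_r}=4\pi^2\,r\,(1+O(1/r))$, uniformly in both parities. Telescoping then yields $W_r=(4\pi^2)^{r-1}(r-1)!\,\theta_r$ with $\theta_r$ of at most polynomial growth. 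The essential point is the \emph{exact} base $4\pi^2$: in any ratio of the form $W_aW_b/W_{a+b}$ the factors $(4\pi^2)^{\bullet}$ cancel exactly, leaving a pure factorial ratio. A merely up-to-constant estimate $W_{r+1}/W_r\asymp r$ would instead leave an uncontrolled $(b_1/b_0)^r$ factor and is too weak for what follows.

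The heart of the argument is the two-term convolution bound $\sum_{a+b=r,\ a,b\geq 1}W_aW_b\prec \tfrac{1}{r}W_r$. Using the previous step, $\tfrac{W_aW_{r-a}}{W_r}\asymp \tfrac{(a-1)!(r-a-1)!}{(r-1)!}=\tfrac{1}{(r-1)\binom{r-2}{a-1}}$ up to polynomial factors, and $\sum_{a=1}^{r-1}1/\binom{r-2}{a-1}$ is uniformly bounded (it is dominated by the two endpoint terms $a=1,r-1$, each of size $O(1)$), which produces the $1/r$ decay. I would then establish the general estimate by induction on $q$, peeling off one variable: writing $S_q(r)=\sum_{\sum r_i=r}\prod W_{r_i}=\sum_s W_s\,S_{q-1}(r-s)$ and inserting the inductive hypothesis $S_{q-1}(m)\le c(D/m)^{q-2}W_m$ reduces the step to controlling $\sum_s W_sW_{r-s}/(r-s)^{q-2}$.

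\textbf{Main obstacle.} The delicate point is the uniform-in-$q$ bookkeeping of constants in this induction. One must show that $\sum_{r_1+\cdots+r_q=r}\prod(r_i-1)!$ is, up to a factor $C^{q-1}$ with $C$ \emph{independent of $q$}, dominated by the $q$ ``extreme'' compositions in which $q-1$ parts equal $1$ and a single part absorbs the remaining complexity; such a composition contributes $(r-q)!\asymp (r-1)!/r^{q-1}$, while the non-extreme compositions are negligible because the central weights $1/\binom{\cdot}{\cdot}$ decay super-polynomially. Verifying that the constant generated at each peeling step stays bounded (indeed the far-endpoint contribution $\asymp (q-2)!/(q-1)^{q-2}$ even decays in $q$) is exactly what produces the clean factor $(D/r)^{q-1}$; it is here that the exactness of the base $4\pi^2$ from the second step is indispensable, since otherwise the per-step constants would compound into a factor exponential in $r$ and destroy the bound.
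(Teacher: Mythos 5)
The paper never proves this statement --- it is imported verbatim from \cite[Lemma 24]{NWX23} --- so there is no internal proof to compare against; your outline reconstructs exactly the standard argument behind that lemma (replace each factor by $W_{r_i}$, extract the factorial growth $W_r\asymp(4\pi^2)^{r-1}(r-1)!\,\theta_r$ with the \emph{exact} base $4\pi^2$ from Theorem \ref{thm Vgn/Vgn+1}(2), prove the two-term convolution bound $\sum_{a+b=r}W_aW_b\prec W_r/r$ via the boundedness of $\sum_a 1/\binom{r-2}{a-1}$, and induct on $q$), and your diagnosis that the exactness of the base is what prevents the per-step constants from compounding into an exponential-in-$r$ loss is correct; the flagged ``main obstacle'' is also handled the way you indicate, since in the peeling step the constraint $r-s\geq q-1$ caps the weight $\bigl(\tfrac{r}{r-s}\bigr)^{q-2}$ while the central binomial weights decay super-exponentially. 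One small repair: iterating $V_{g-1,n+4}\leq V_{g,n+2}$ only drives $n$ down to $\{2,3\}$, not $\{0,1\}$, and the literal inequality $V_{g_i,n_i}\leq W_{r_i}$ is false at $r_i=1$ (where $V_{0,3}=1>\pi^2/12=V_{1,1}=W_1$); passing from $n\in\{2,3\}$ to $n\in\{0,1\}$ requires Theorem \ref{thm Vgn/Vgn+1}(2), which costs a universal constant per factor, harmlessly absorbed into $c\,D^{q-1}$.
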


The following asymptotic behavior of $V_{g,n}(x_1,\cdots,x_n)$ was firstly studied in \cite[Proposition 3.1]{MP19}. We use the following version in 
 \cite{NWX23}. One may also see sharper ones in \cite{AM22}.
\begin{theorem}[{\cite[Lemma 20]{NWX23}}]\label{thm Vgn(x) small x}
	There exists a constant $c(n)>0$ independent of $g$ and $x_i$'s such that
	$$\left(1-c(n)\frac{\sum_{i=1}^n x_i^2}{g}\right)\prod_{i=1}^n \frac{\sinh(x_i/2)}{x_i/2} \leq \frac{V_{g,n}(x_1,\cdots,x_n)}{V_{g,n}} \leq \prod_{i=1}^n \frac{\sinh(x_i/2)}{x_i/2}.$$
\end{theorem}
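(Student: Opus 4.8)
The plan is to compare the coefficients of the polynomial $V_{g,n}(x_1,\dots,x_n)$ with those of the target product $\prod_{i=1}^n\frac{\sinh(x_i/2)}{x_i/2}$ term by term. First I would record the elementary expansion $\frac{\sinh(x/2)}{x/2}=\sum_{k\geq 0}\frac{x^{2k}}{2^{2k}(2k+1)!}$, so that
\[
\prod_{i=1}^n\frac{\sinh(x_i/2)}{x_i/2}=\sum_{\alpha\in\N^n}\prod_{i=1}^n\frac{x_i^{2\alpha_i}}{2^{2\alpha_i}(2\alpha_i+1)!}.
\]
Using Theorem \ref{mirz07} and Mirzakhani's expression of the coefficients $C_\alpha$ as (appropriately normalized) $\psi$--$\kappa$ intersection numbers, I would rewrite the volume polynomial in the very same monomial basis,
\[
V_{g,n}(x_1,\dots,x_n)=\sum_{|\alpha|\leq 3g-3+n}b_{g,n}(\alpha)\prod_{i=1}^n\frac{x_i^{2\alpha_i}}{2^{2\alpha_i}(2\alpha_i+1)!},\qquad b_{g,n}(0)=V_{g,n}.
\]
The whole statement then reduces to a two-sided control of the normalized coefficients $\beta_{g,n}(\alpha):=b_{g,n}(\alpha)/V_{g,n}$, namely $1-c(n)\tfrac{|\alpha|}{g}\leq\beta_{g,n}(\alpha)\leq 1$.

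Granting these coefficient bounds, both inequalities follow quickly. For the upper bound, since every $\beta_{g,n}(\alpha)\leq 1$ and every monomial is nonnegative, extending the finite sum to all of $\N^n$ only increases it, yielding exactly $\prod_i\frac{\sinh(x_i/2)}{x_i/2}$. For the lower bound I may assume $c(n)\tfrac{\sum_i x_i^2}{g}\le 1$, since otherwise the claimed bound is negative and hence trivial because $V_{g,n}(x)\geq 0$; in this regime $x_i=O(\sqrt g)$ and the truncation tail $|\alpha|>3g-3+n$ is negligible by a Stirling estimate. Then $\beta_{g,n}(\alpha)\geq 1-c(n)|\alpha|/g$ gives
\[
\frac{V_{g,n}(x)}{V_{g,n}}\ \geq\ \prod_{i}\frac{\sinh(x_i/2)}{x_i/2}-\frac{c(n)}{g}\sum_{\alpha}|\alpha|\prod_i\frac{x_i^{2\alpha_i}}{2^{2\alpha_i}(2\alpha_i+1)!}.
\]
Writing $t=x_j/2$, the subtracted sum factors over $j$, and the single-variable piece is $\sum_k k\frac{t^{2k}}{(2k+1)!}=\frac{t\cosh t-\sinh t}{2t}$; the elementary bound $\frac{t\cosh t-\sinh t}{2t}\prec t^2\cdot\frac{\sinh t}{t}$ (checked by examining $t\to 0$ and $t\to\infty$) shows the subtracted term is $\prec\frac{\sum_i x_i^2}{g}\prod_i\frac{\sinh(x_i/2)}{x_i/2}$, which is the desired error after enlarging $c(n)$.

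The crux, and the main obstacle, is the coefficient estimate $1-c(n)|\alpha|/g\leq\beta_{g,n}(\alpha)\leq 1$. The upper bound $\beta_{g,n}(\alpha)\leq 1$ is Mirzakhani's monotonicity of normalized intersection numbers (equivalently her $\sinh$ upper bound, see \cite{Mirz13}). For the lower bound I would run Mirzakhani's recursion for these coefficients, reducing $|\alpha|$ one unit at a time, and show that each reduction costs a multiplicative factor $1-O(1/g)$; here the large-genus volume asymptotics of Theorem \ref{thm Vgn/Vgn+1}(2) (equivalently the Mirzakhani--Zograf estimates of \cite{MZ15}, with sharper forms in \cite{AM22}) are exactly what control the ratios $V_{g,n+1}/(2g\,V_{g,n})$ and $V_{g,n}/V_{g-1,n+2}$ entering each step. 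Telescoping over the $|\alpha|$ increments then yields $\beta_{g,n}(\alpha)\geq(1-O(1/g))^{|\alpha|}\geq 1-c(n)|\alpha|/g$ in the nontrivial regime. I expect the delicate point to be the bookkeeping of this recursion: tracking the dependence of the constant on $n$ and, above all, verifying that the per-step loss is genuinely $O(1/g)$ uniformly in $\alpha$ (not just for small $\alpha_i$), which is where Mirzakhani's large-genus intersection-number machinery must be imported and repackaged.
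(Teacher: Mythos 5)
The paper does not prove this statement at all: it is imported verbatim as \cite[Lemma 20]{NWX23} (going back to \cite[Proposition 3.1]{MP19}), so the only meaningful comparison is with the cited source's argument, which your proposal reconstructs essentially exactly. Expanding $V_{g,n}$ in the basis $\prod_i x_i^{2\alpha_i}/\bigl(2^{2\alpha_i}(2\alpha_i+1)!\bigr)$, identifying the normalized coefficients $\beta_{g,n}(\alpha)$ with normalized $\psi$-intersection numbers $[\tau_{\alpha_1}\cdots\tau_{\alpha_n}]_{g,n}/V_{g,n}$, getting the upper bound from Mirzakhani's monotonicity \cite{Mirz13} and the lower bound from the Mirzakhani--Zograf-type estimate $1-\beta_{g,n}(\alpha)\prec |\alpha|/g$ \cite{MZ15,NWX23}, and then resumming via $\sum_k k\,t^{2k}/(2k+1)! = (t\cosh t-\sinh t)/(2t)\prec t^2\cdot\frac{\sinh t}{t}$ is precisely how the source proceeds. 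Two corrections to your write-up, both in your favor. First, the Stirling tail estimate is unnecessary: since $3g-3+n\geq g$ for $g\geq 2$, any $\alpha$ with $|\alpha|>3g-3+n$ has $1-c(n)|\alpha|/g\leq 0$ once $c(n)\geq 1$, so those terms can be appended to the lower bound for free and no restriction to the regime $\sum_i x_i^2\prec g$ is needed (though that reduction is harmless). Second, your flagged worry about uniformity of the per-step $O(1/g)$ loss in $\alpha$ dissolves for the same reason: the deficit $1-\beta_{g,n}(\alpha)$ is trivially at most $1$, so the telescoping only needs to be effective for $|\alpha|\prec g$, where an \emph{additive} loss of $O(V_{g,n}/g)$ per unit decrease of an index (which is what Mirzakhani's recursions actually give, rather than your multiplicative $(1-O(1/g))^{|\alpha|}$ framing) yields the linear bound. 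The one point where your text is a plan rather than a proof is the coefficient lower bound itself; you correctly isolate it as the crux and name the right mechanism, but you import it from \cite{Mirz13,MZ15} just as the paper imports the whole theorem from \cite{NWX23}, so no new gap is introduced relative to the paper's own treatment.
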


\subsection{Figure-eight closed geodesics}
Let $X$ be a hyperbolic surface. We say a closed geodesic in $X$ is a \emph{figure-eight closed geodesic} if it has exactly one self-intersection point. Given a figure-eight closed geodesic $\alpha$, it is filling in a pair of pants $P(\gamma_1,\gamma_2,\gamma_3)$ with three geodesic boundaries as shown in Figure \ref{fig:figure-8}. The length $\ell(\alpha)$ of the figure-eight closed geodesic  $\alpha$ is given by (see e.g. \cite[Equation (4.2.3)]{Buser10}):
\begin{equation}\label{eq figure-8 length}
	\cosh\tfrac{\ell(\alpha)}{2}= \cosh\tfrac{\ell(\gamma_3)}{2} + 2\cosh\tfrac{\ell(\gamma_1)}{2}\cosh\tfrac{\ell(\gamma_2)}{2}.
\end{equation}
It is clear that $\ell(\alpha)\geq 2\arccosh 3$.
\begin{figure}[b]
	\centering
	\includegraphics[width=2.0 in]{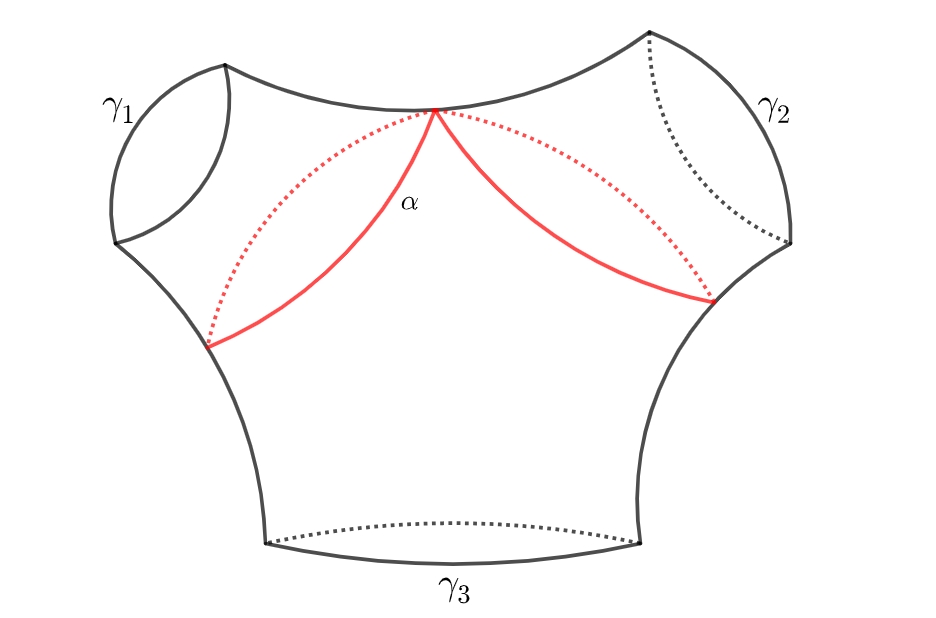}
	\caption{A figure-eight closed geodesic $\alpha$ in the pair of pants $P(\gamma_1,\gamma_2,\gamma_3)$.}
	\label{fig:figure-8}
\end{figure}

\begin{rem*}
	In a pair of pants $P(\gamma_1,\gamma_2,\gamma_3)$, there are exactly three different figure-eight closed geodesics. Here $\alpha$ is the figure-eight closed geodesic winding around $\gamma_1$ and $\gamma_2$ as shown in Figure \ref{fig:figure-8}.
\end{rem*}

It is known (see e.g. \cite[Lemma 5.2]{Tor23},  or \cite[Section 5.2]{Buser10} ) that the length of the shortest figure-eight closed geodesic in any $X\in \sM_g$ is bounded.
\begin{lemma}\label{thm f-8 <clogg}
	There exists a universal constant $c>0$ independent of $g$ such that for any $X\in\M_g$, the shortest figure-eight closed geodesic in $X$ has length $\leq c\log g$.
\end{lemma}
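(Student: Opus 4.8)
The plan is to show that every closed hyperbolic surface of genus $g$ contains a figure-eight closed geodesic of length at most $c\log g$ for some universal constant $c$. The key geometric fact I will exploit is formula \eqref{eq figure-8 length}: a figure-eight geodesic lives in a pair of pants $P(x,y,z)$ and has length governed by $\cosh(L/2)=\cosh(z/2)+2\cosh(x/2)\cosh(y/2)$, so it suffices to exhibit a pair of pants in $X$ whose three boundary lengths are all $O(\log g)$. Indeed, if $x,y,z\leq c'\log g$, then $\cosh(L/2)\leq 3\cosh(c'\log g /2)\le 3e^{c'\log g/2}$, giving $L\le c\log g$ after taking $\arccosh$ and absorbing constants. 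Thus the whole problem reduces to a combinatorial-geometric statement: \emph{every $X\in\sM_g$ contains an embedded pair of pants all of whose cuff lengths are $O(\log g)$.}

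\medskip

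First I would recall the standard diameter/injectivity bound: by a volume comparison argument (the area of $X$ is $2\pi(2g-2)$ while the area of an embedded hyperbolic disk grows exponentially in its radius), any point of $X$ lies within distance $O(\log g)$ of any other, and in particular $X$ admits short nonseparating curves — the systole is $O(\log g)$. More usefully, I would build the pair of pants directly. Starting from a shortest closed geodesic $\gamma_1$ (which is simple and has length $\ls(X)=O(\log g)$), I would look in a bounded-length neighborhood of $\gamma_1$. The plan is to find two further disjoint simple closed geodesics, each of controlled length, that together with $\gamma_1$ cut off a pair of pants. One clean way: take a shortest geodesic arc (or a shortest geodesic crossing $\gamma_1$) to produce a second curve of length $O(\log g)$ intersecting or disjoint from $\gamma_1$, and then use surgery. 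The most robust route is to appeal to the Bers-type constant: a surface of genus $g$ has a pants decomposition in which every cuff has length at most $B_g\asymp\sqrt{g}$ in general, but this is too weak; instead I would use the \emph{local} statement that around any short curve one can complete to a pants boundary of length $O(\log g)$, which follows from the fact that the set of curves of length $\le R$ already fills enough of the surface once $R\gtrsim\log g$.

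\medskip

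Concretely, the cleanest implementation I would carry out is this. Fix a point $p\in X$ with maximal injectivity radius; by the area bound $\inj_p(X)\le C$ for a universal $C$, and in fact one can find a point where a short figure-eight configuration appears directly. Alternatively, take the shortest nonseparating simple closed geodesic $\gamma$, of length $O(\log g)$. Cutting along $\gamma$ produces $S_{g-1,2}$; inside a collar-complement region one again finds a short curve, and iterating (or taking a shortest arc from the $\gamma$-boundary back to itself) yields a second simple closed geodesic $\delta$ of length $O(\log g)$ meeting or disjoint from $\gamma$. A regular neighborhood of $\gamma\cup\delta$ (or of $\gamma$ together with a short connecting arc) is a subsurface with Euler characteristic $-1$ whose boundary components are homotopic to geodesics of length $O(\log g)$, i.e. the desired pair of pants. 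Feeding its three cuff lengths into \eqref{eq figure-8 length} gives a figure-eight geodesic of length $\le c\log g$, and since the shortest one is no longer, the lemma follows.

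\medskip

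\textbf{The main obstacle} will be producing the pair of pants with \emph{all three} cuffs short simultaneously and genuinely embedded (not just immersed), rather than merely finding short curves. Short curves are easy to locate via area bounds, but controlling that a neighborhood of $\gamma\cup\delta$ closes up into an honest pair of pants — so that the three boundary geodesics are simple, disjoint, and of length $O(\log g)$ — requires care about how the connecting arc sits relative to the collar of $\gamma$ and about ruling out degenerate topological types. Since the statement is classical and a reference (\cite[Lemma 5.2]{Tor23} and \cite[Section 5.2]{Buser10}) is already cited, the expected proof likely invokes these directly; my own proof would lean on the exponential area-growth estimate to guarantee enough short geodesics in a $O(\log g)$-ball and then a short topological argument to assemble them into the pair of pants, keeping all length bounds uniform in $g$.
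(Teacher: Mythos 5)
Your plan follows the same route as the paper's own (outline of a) proof: reduce via \eqref{eq figure-8 length} to exhibiting an embedded pair of pants all of whose cuff geodesics have length $O(\log g)$, and assemble that pair of pants from a systolic curve $\gamma$ together with a shortest geodesic arc from $\gamma$ back to itself (which is exactly what the touching point of the maximal collar around $\gamma$ produces). However, there is a genuine quantitative gap at the decisive step, and it is not the embeddedness issue you flag as the main obstacle. You assume the connecting arc $\sigma$ is \emph{short} ($O(\log g)$) and then bound the cuffs of the regular neighborhood of $\gamma\cup\sigma$ by the lengths of the piecewise representatives (arcs of $\gamma$ plus copies of $\sigma$). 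This is not uniform over $\M_g$: moduli space is noncompact, and when $\ell=\ell_\gamma(X)$ is tiny, the collar lemma forces \emph{every} essential arc meeting $\gamma$ at both endpoints (and every geodesic crossing $\gamma$) to have length at least about $2\arcsinh\bigl(1/\sinh(\ell/2)\bigr)\approx 2\log(4/\ell)$, which exceeds any fixed multiple of $\log g$ once $\ell$ is small enough. So your accounting only yields a bound of order $\log g+\log\bigl(1/\ls(X)\bigr)$, not $c\log g$. For the same reason your warm-up assertion that any two points of $X$ are within distance $O(\log g)$ is false (the diameter is unbounded on $\M_g$ by pinching); the systole bound $\ls(X)\prec\log g$ should instead be derived from the embedded-ball area estimate at a single point.

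What rescues the construction --- and what the paper's outline, following the cited references, implicitly uses --- is that the \emph{geodesic} representatives of the cuffs are exponentially shorter than the piecewise boundary curves: by standard pants trigonometry, if the maximal embedded collar around $\gamma$ has half-width $w$, then the two outer cuffs $x,y$ of the resulting pair of pants satisfy, up to bounded factors, $\cosh(x/2)\asymp\sinh(w)\sinh(\ell/2)$, while the embedded-collar area bound $2\ell\sinh(w)\leq 4\pi(g-1)$ gives $\sinh(w)\leq 2\pi(g-1)/\ell$; combining, $\cosh(x/2)\prec g\cosh(\ell/2)$, hence $x\prec\log g$ \emph{regardless} of how small $\ell$ is --- the $\log(1/\ell)$ contributions cancel. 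Feeding $(\ell,x,y)$, all $\prec\log g$, into \eqref{eq figure-8 length} then finishes, exactly as in the paper. So your proposal needs either this length cancellation, or a case split in which the small-systole regime is handled separately (e.g.\ by running your ball/loop argument from a point of definite injectivity radius in the thick part); without one of these, the step ``a short connecting arc exists'' fails and the claimed uniform $c\log g$ bound does not follow.
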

\bp[Outline of the proof of Lemma \ref{thm f-8 <clogg}]
Let $\gamma\subset X$ be a systolic curve. It is known that $\ell_\gamma(X)\prec \log g$. Next, consider the maximal collar around $\gamma$ and then one may get a pair of pants such that its boundary contains $\gamma$ and each of the three boundary geodesics has length $\prec \log g$. Then the conclusion follows from \eqref{eq figure-8 length}. One may see \cite{Tor23, Buser10} for more details. 
\ep

\begin{rem*}
From \cite[Theorem 4]{NWX23}, we know that the growth rate $\log g$ in the upper bound in Lemma \ref{thm f-8 <clogg} holds for generic hyperbolic surfaces in $\sM_g$ as $g\to \infty$. In this paper, we study its precise asymptotic behavior.
\end{rem*}

Recall that  the \emph{non-simple systole} $\nsys(X)$ of $X\in \sM_g$ is defined as
\[\ell^{ns}_{sys}(X)=\min\{\ell_{\alpha}(X); \ \textit{$\alpha\subset X$ is a non-simple closed geodesic} \}.\]

\noindent We focus on figure-eight closed geodesics because the non-simple systole of a hyperbolic surface is always achieved by a figure-eight closed geodesic (see e.g. \cite[Theorem 4.2.4]{Buser10}). That is,
\[\ell^{ns}_{sys}(X)=\textit{length of a shortest figure-eight closed geodesic in $X$.}\]
In particular, as introduced above, we have
\[\sup\limits_{X\in \sM_g}\ell^{ns}_{sys}(X)\asymp \log g.\]

\subsection{Three counting results on closed geodesics}
In this subsection, we mainly introduce three results about counting closed geodesics in hyperbolic surfaces. In this paper, we only consider primitive closed geodesics without orientations.

\subsubsection{On closed hyperbolic surfaces}
Firstly, by the Collar Lemma (see e.g. \cite[Theorem 4.1.6]{Buser10}), we have that a closed hyperbolic surface of genus $g$ has most $3g-3$  pairwisely disjoint simple closed geodesics of length $\leq 2\arcsinh 1 \approx 1.7627$. It is also known from  \cite[Theorem 6.6.4]{Buser10} that for all $L>0$ and $X\in \sM_g$, there are at most $(g-1)e^{L+6}$ closed geodesics in $X$ of length $\leq L$ which are not iterates of closed geodesics of length $\leq 2\arcsinh 1$. As a consequence, we have 
\begin{theorem}\label{thm count ge^L upp}
For any $L>0$ and $X\in\M_g$, there are at most $(g-1)e^{L+7}$ primitive closed geodesics in $X$ of length $\leq L$. 
\end{theorem}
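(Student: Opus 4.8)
The plan is to split the primitive closed geodesics of length $\leq L$ into a ``short'' range and a ``long'' range, and to dispatch the two ranges by the two facts recalled just before the statement: the Collar Lemma count of short disjoint simple geodesics, and the quoted consequence of \cite[Theorem 6.6.4]{Buser10}.

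First I would bound the short primitive geodesics, i.e. those of length $\leq 2\arcsinh 1$. By the Collar Lemma (\cite[Theorem 4.1.6]{Buser10}), every closed geodesic of length $\leq 2\arcsinh 1$ on a closed surface of genus $g$ is simple, and any two such are pairwise disjoint; hence there are at most $3g-3$ of them, and each is automatically primitive. This disposes of the short range with the linear bound $3g-3=3(g-1)$.

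Next I would treat the remaining primitive closed geodesics, namely those of length in $(2\arcsinh 1, L]$. The key observation is that such a geodesic can never be an iterate of a closed geodesic of length $\leq 2\arcsinh 1$: if $\alpha=\gamma^k$ with $\ell_\gamma(X)\leq 2\arcsinh 1$, then primitivity of $\alpha$ forces $k=1$ and $\alpha=\gamma$, contradicting $\ell_\alpha(X)>2\arcsinh 1$. Consequently every such $\alpha$ lies among the closed geodesics of length $\leq L$ that are \emph{not} iterates of length-$\leq 2\arcsinh 1$ geodesics, so the quoted consequence of \cite[Theorem 6.6.4]{Buser10} bounds their number by $(g-1)e^{L+6}$. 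Adding the two estimates gives a total of at most $(3g-3)+(g-1)e^{L+6}=(g-1)\left(3+e^{L+6}\right)$, and since $L>0$ yields $e^{L+7}-e^{L+6}=e^{L+6}(e-1)\geq e^{6}(e-1)>3$, this is at most $(g-1)e^{L+7}$, which is the claim.

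There is no genuine obstacle here; the statement is really a bookkeeping combination of the Collar Lemma count with Buser's exponential count. The only point demanding a moment's care is the primitivity argument ensuring that a primitive geodesic longer than $2\arcsinh 1$ is never an iterate of a short one, so that the short bucket and Buser's bucket together cover all primitive geodesics of length $\leq L$ without gap. After that the proof collapses to the elementary inequality $3+e^{L+6}\leq e^{L+7}$, which absorbs the linear Collar-Lemma term into the exponent shift from $e^{L+6}$ to $e^{L+7}$.
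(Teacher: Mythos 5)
Your proof is correct and follows exactly the derivation the paper intends: Theorem \ref{thm count ge^L upp} is stated there as a direct consequence of the Collar Lemma count ($\leq 3g-3$ geodesics of length $\leq 2\arcsinh 1$, all simple and primitive) together with the bound $(g-1)e^{L+6}$ from \cite[Theorem 6.6.4]{Buser10}, combined via $3+e^{L+6}\leq e^{L+7}$. Your explicit check that a primitive geodesic of length $>2\arcsinh 1$ cannot be an iterate of a short one is the one detail the paper leaves implicit, and you handle it correctly.
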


\subsubsection{On compact hyperbolic surfaces with geodesic boundaries}\label{ss-count-2}
For compact hyperbolic surfaces with geodesic boundaries, the following result for filling closed multi-geodesics is useful.  
\begin{definition}\label{def fill k-tuple}
	Let $Y\in\M_{g,n}(L_1,\cdots,L_n)$ be a hyperbolic surface with boundaries. Let $\Gamma=(\gamma_1,\cdots,\gamma_k)$ be an ordered $k$-tuple where $\gamma_i$'s are non-peripheral closed geodesics in $Y$. We say $\Gamma$ is \emph{filling} in $Y$ if each connected component of the complement $Y\setminus \bigcup_{i=1}^k \gamma_i$ is homeomorphic to either a disk or a cylinder which is homotopic to a boundary component of $Y$.
	
	In particular, a filling $1$-tuple is a filling closed geodesic in $Y$.
	Define the length of a $k$-tuple $\Gamma=(\gamma_1,\cdots,\gamma_k)$ to be the total length of $\gamma_i$'s, that is, 
	$$\ell_\Gamma(Y):=\sum_{i=1}^k \ell_{\gamma_i}(Y).$$
	For $L >0$, define the counting function $N_k^{\text{fill}}(Y,L)$ by
	$$N_k^{\text{fill}}(Y,L):= \#\left\{\Gamma=(\gamma_1,\cdots,\gamma_k);\ 
	\begin{aligned}
		&\Gamma\ \text{is a filling}\ k\text{-tuple in}\ Y \\
		&\text{and}\ \ell_{\Gamma}(Y)\leq L
	\end{aligned} \right\}.$$
\end{definition}

\begin{theorem}[{\cite[Theorem 4]{WX22-GAFA}} or {\cite[Theorem 18]{WX22pgt}}]\label{thm count fill k-tuple}
	For any $k\in\Z_{\geq 1}$, $0<\eps<\frac{1}{2}$ and $m=2g-2+n\geq 1$, there exists
	a constant $c(k,\eps,m)>0$ only depending on $k,\eps$ and $m$ such that for all $L>0$ and any compact hyperbolic surface $Y$ of genus $g$ with $n$ boundary simple closed geodesics, the following holds:
	$$N_k^{\text{fill}}(Y,L)\leq c(k,\eps,m)\cdot (1+L)^{k-1} e^{L-\frac{1-\eps}{2}\ell(\partial Y)},$$
	where $\ell(\partial Y)$ is the total length of the boundary closed geodesics of $Y$.
\end{theorem}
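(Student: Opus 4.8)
The plan is to prove the bound in two stages: first treat the single-geodesic case $k=1$, extracting the crucial boundary-length gain $e^{-\frac{1-\eps}{2}\ell(\partial Y)}$, and then bootstrap to arbitrary $k$ by a length-convolution argument that produces the polynomial factor $(1+L)^{k-1}$. Throughout I will freely allow the implied constants to depend on $m$ (and on $k,\eps$), which is important: since $m=2g-2+n$ controls both the genus and the number of boundary components, any such combinatorial factor can be absorbed, and only the genuinely unbounded quantity $\ell(\partial Y)$ must be handled geometrically.

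For the exponential rate I would first record the crude bound that the number of primitive closed geodesics of length $\le T$ in $Y$ is $\prec e^{T}$ with implied constant depending on $m$. This follows by doubling $Y$ across its boundary to obtain a closed hyperbolic surface $DY$ of genus $2g+n-1=m+1$: because $\partial Y$ is geodesic, $DY$ is smooth and each half is totally geodesic, so every primitive closed geodesic of $Y$ remains a distinct primitive closed geodesic of $DY$. Theorem \ref{thm count ge^L upp} applied to $DY$ then gives the bound $m\cdot e^{T+7}$. This crude count carries \emph{no} boundary gain, so the heart of the matter is to show that the length forced near the boundary is ``entropy-free,'' i.e.\ it lengthens a filling geodesic without enlarging the exponential count.

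The mechanism for the boundary gain is collar geometry. Around each boundary geodesic $\beta_i$ of length $b_i$ there is an embedded half-collar (\cf \cite{Buser10}), and because the tuple is filling, the multicurve $\cup_j\gamma_j$ must wind nontrivially around every $\beta_i$; each such winding spends a definite amount of length traversing the thin collar. The model computation is \eqref{eq figure-8 length}: a figure-eight winding around two cuffs of lengths $x,y$ has length $\approx x+y$, so wrapping a cuff of length $b_i$ costs length $\gtrsim b_i/2$. The key point is that inside the thin collar the geodesic is geometrically constrained and offers essentially no branching, whereas the exponential multiplicity lives in the thick part; hence this wrapping length is entropy-free. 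Quantifying this — that a filling configuration of total length $\le L$ has effective free length at most $L-\frac{1-\eps}{2}\ell(\partial Y)$ over which the $e^{(\cdot)}$ counting operates — yields, for $k=1$,
\[
N_1^{\text{fill}}(Y,L)\;\prec\; e^{\,L-\frac{1-\eps}{2}\ell(\partial Y)},
\]
with implied constant depending on $m,\eps$. The concession $\eps$ absorbs the gap between the exact collar width and $b_i/2$, together with the summation over the (at most $\prec m$) boundary components.

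To pass from $k=1$ to general $k$, I would distribute the free length among the $k$ geodesics. Writing $t_i=\ell_{\gamma_i}(Y)$, the admissible tuples satisfy $t_i\ge 0$ and $\sum_{i=1}^{k}t_i\le L$, while the collective wrapping already consumes $\frac{1-\eps}{2}\ell(\partial Y)$ of this budget. Replacing the count by the convolution of the single-geodesic density $e^{t}\,dt$ over the simplex $\{\sum_i t_i\le L\}$, and applying the entropy-free boundary gain once to the whole configuration rather than to each $\gamma_i$ separately, produces a factor equal to the simplex volume, namely $\prec \frac{L^{k-1}}{(k-1)!}\prec (1+L)^{k-1}$, times $e^{\,L-\frac{1-\eps}{2}\ell(\partial Y)}$; collecting constants into $c(k,\eps,m)$ gives the claim. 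The main obstacle is precisely the boundary-gain step: making rigorous the assertion that collar traversals add length but not entropy. One must control how a filling geodesic can repeatedly enter and exit the collars, rule out that repeated windings manufacture extra branching, and track the collar width's dependence on $b_i$ uniformly as the boundary lengths grow. This is exactly where the arguments of \cite{WX22-GAFA, WX22pgt} do the real work, and where the flexibility of an $m$-dependent constant and of conceding the $\eps$ in the exponent is essential.
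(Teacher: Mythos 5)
You should first note that the paper does not prove Theorem \ref{thm count fill k-tuple} at all: it is imported verbatim from \cite{WX22-GAFA, WX22pgt}, so your attempt has to be measured against the proof in those cited works. Your outer architecture is consistent with it: the crude exponential count is fine (your doubling trick works — $DY$ is a closed surface of genus $2g+n-1=m+1\geq 2$, interior primitive closed geodesics of $Y$ remain distinct primitive closed geodesics of $DY$, and Theorem \ref{thm count ge^L upp} applies), the polynomial factor $(1+L)^{k-1}$ does come from distributing total length over a $k$-simplex, and using $\eps$ to absorb polynomial losses is exactly how the cited proof proceeds.

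The genuine gap is the boundary-gain step, which you yourself concede is ``where the arguments of \cite{WX22-GAFA, WX22pgt} do the real work'' — but that step \emph{is} the theorem, and the mechanism you propose for it would fail as stated. A filling multi-geodesic need not enter any fixed collar about a boundary geodesic $\beta_i$: the complementary cylinder between $\cup_j\gamma_j$ and $\beta_i$ can be wide, so the multi-geodesic can stay far from $\partial Y$; moreover the embedded half-collar about a cuff of length $b_i$ has width tending to $0$ as $b_i\to\infty$, so ``cost per collar traversal'' cannot produce a gain proportional to $b_i$. The mechanism that actually works is homotopy-theoretic, not metric: since the complement of a filling $\Gamma$ consists of disks and boundary-parallel cylinders, each $\beta_i$ is freely homotopic to a closed loop $\eta_i$ concatenated from subarcs of $\Gamma$, with each point of $\Gamma$ used at most twice among all the $\eta_i$'s, whence $\ell(\partial Y)\leq \sum_i\ell(\eta_i)\leq 2\ell_\Gamma(Y)$. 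The counting gain then comes from decomposing $\Gamma$ into the arcs tracing the $\eta_i$'s and the complementary arcs: up to bounded ambiguity, the tracing arcs are determined by endpoint/winding data along the \emph{fixed} geodesics $\beta_i$, so they contribute only polynomially many homotopy classes, and the exponential entropy applies only to the complementary length $\ell_\Gamma-\tfrac12\sum_i\ell(\eta_i)\leq L-\tfrac12\ell(\partial Y)$; the polynomial factors in $\ell(\partial Y)$ are absorbed into $e^{\frac{\eps}{2}\ell(\partial Y)}$. Your ``entropy-free'' slogan is the correct intuition, but without this decomposition it remains an assertion rather than a proof. A secondary issue: for $k\geq 2$ no individual $\gamma_i$ need be filling, so one cannot literally convolve the $k=1$ filling bound over the simplex as you write; one must run the decomposition for the union $\cup_j\gamma_j$ (whose filling property is all one has) and only afterwards distribute the length budget among the components — you half-anticipate this by applying the gain ``once to the whole configuration,'' but the convolution step as formulated presupposes a per-component bound that is unavailable.
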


\subsubsection{On $S_{1,2}$ and $S_{0,4}$}
For the cases of $S_{1,2}$ and $S_{0,4}$, we will apply the McShane-Mirzakhani identity to give more refined counting results on certain specific types of simple closed geodesics. The McShane-Mirzakhani identity states as:
\begin{theorem}[{\cite[Theorem 1.3]{Mirz07}}] \label{thm McShane id}
	For $Y\in\M_{g,n}(L_1,\cdots,L_n)$ with $n$ geodesic boundaries $\beta_1,\cdots,\beta_n$ of length $L_1,\cdots,L_n$ respectively, we have
	\begin{equation*}
		\sum_{\{\gamma_1,\gamma_2\}} \mathcal D(L_1, \ell(\gamma_1), \ell(\gamma_2)) +
		\sum_{i=2}^n \sum_\gamma \mathcal R(L_1,L_i,\ell(\gamma)) = L_1
	\end{equation*}
	where the first sum is over all unordered pairs of simple closed geodesics $\{\gamma_1, \gamma_2\}$ bounding a pair of pants with $\beta_1$, and the second sum is over all simple closed geodesics $\gamma$ bounding a pair of pants with $\beta_1$ and $\beta_i$. Here $\mathcal D$ and $\mathcal R$ are given by
	\begin{equation*}
		\mathcal D(x,y,z) = 2\log\left( \frac{e^{\frac{x}{2}}+e^{\frac{y+z}{2}}}{e^{\frac{-x}{2}}+e^{\frac{y+z}{2}}} \right)
	\end{equation*}
and
	\begin{equation*}
		\mathcal R(x,y,z) = x - \log\left( \frac{\cosh(\frac{y}{2})+\cosh(\frac{x+z}{2})}{\cosh(\frac{y}{2})+\cosh(\frac{x-z}{2})} \right).
	\end{equation*}
\end{theorem}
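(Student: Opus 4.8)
The plan is to follow the unfolding argument that McShane introduced for the once-punctured torus and that Mirzakhani extended to arbitrary bordered surfaces. I would fix the distinguished boundary $\beta_1$, parametrize it by arc length as a circle of circumference $L_1$, and for each point $p\in\beta_1$ launch the unit-speed geodesic ray $\rho_p$ that meets $\beta_1$ orthogonally at $p$ and enters the interior of $Y$. The identity will emerge as a decomposition of the total measure $L_1$ of $\beta_1$ into the contributions of the points $p$, organized according to the long-term behaviour of $\rho_p$.

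First I would establish the trichotomy for $\rho_p$: either $\rho_p$ is a simple complete ray that never returns to $\partial Y$, or $\rho_p$ eventually self-intersects, or $\rho_p$ runs into some boundary component (including $\beta_1$ itself). The crucial input here is the Birman--Series theorem, which guarantees that the set of $p$ for which $\rho_p$ is a simple, non-returning complete geodesic has Hausdorff dimension strictly less than $1$, hence one-dimensional measure zero. This lets me discard that exceptional set and work on the full-measure complement.

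Next I would show that this complement decomposes into countably many open intervals, the \emph{gaps}, each of which is canonically attached to an embedded pair of pants having $\beta_1$ as one cuff. Concretely, for $p$ in a gap the ray $\rho_p$ either first self-intersects or first meets $\partial Y$; pushing the endpoints of the gap to their limiting directions, the two boundary geodesics of the gap spiral onto the remaining cuffs of a unique embedded pair of pants $P$. There are exactly two combinatorial types: $P$ has $\beta_1$ together with two interior simple closed geodesics $\gamma_1,\gamma_2$ as its other cuffs, or $P$ has $\beta_1$, another boundary $\beta_i$, and one interior simple closed geodesic $\gamma$ as cuffs. These two types produce, respectively, the first and the second sums in the statement, and I would check that the assignment gap $\mapsto P$ is a bijection onto the embedded pairs of pants sharing the cuff $\beta_1$.

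The remaining step, and the one I expect to be the main computation, is to evaluate the length of each gap in closed form. I would pass to the universal cover, lift $\beta_1$ to a complete geodesic equipped with its orthogonal foliation, and compute the two endpoints of a gap by elementary hyperbolic trigonometry inside the pair of pants $P$: the orthogeodesics together with the two spiralling geodesics bound the admissible interval of launching points, and the standard identities in $\mathbb{H}$ give the gap length exactly as $\mathcal D(L_1,\ell(\gamma_1),\ell(\gamma_2))$ in the first case and $\mathcal R(L_1,L_i,\ell(\gamma))$ in the second. Summing these lengths over all gaps and invoking the measure-zero assertion for the discarded set then yields $\sum \mathcal D + \sum\sum \mathcal R = L_1$, as claimed. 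The delicate points throughout are the Birman--Series measure-zero input, which I would cite rather than reprove, and the verification that the gap-to-pair-of-pants correspondence is well defined and exhausts the right set of pairs of pants.
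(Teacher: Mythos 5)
This statement is quoted in the paper directly from Mirzakhani (\cite[Theorem 1.3]{Mirz07}); the paper itself contains no proof, so the only meaningful comparison is with the cited original argument, and your outline is precisely that argument: McShane's unfolding of the orthogonal-ray foliation of $\beta_1$, the Birman--Series theorem to discard the measure-zero set of simple non-returning rays, the classification of the remaining behaviour by embedded pairs of pants containing $\beta_1$ as a cuff, and the hyperbolic-trigonometric evaluation of the resulting lengths as $\mathcal D$ and $\mathcal R$. One point to correct in your write-up: the assignment of intervals to pairs of pants is not a bijection between single gaps and pants. For a fixed embedded pair of pants $P$ the set of points $p\in\beta_1$ whose ray $\rho_p$ is associated to $P$ is in general a union of finitely many maximal open intervals (the central gap together with the side intervals of rays spiralling toward the interior cuffs), and the functions $\mathcal D(L_1,\ell(\gamma_1),\ell(\gamma_2))$ and $\mathcal R(L_1,L_i,\ell(\gamma))$ are by definition the \emph{total} measure of that union, not the length of one interval. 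So the correct statement to verify is that the full-measure complement of the Birman--Series set decomposes, up to measure zero, into these pants-indexed unions, each pants of the two combinatorial types contributing $\mathcal D$ or $\mathcal R$ respectively; with that adjustment your sketch is a faithful account of the proof of the identity.
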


The functions $\mathcal D(x,y,z)$ and $\mathcal R(x,y,z)$ have the following elementary properties:
\begin{lemma}[{\cite[Lemma 27]{NWX23}}] \label{thm estimation R,D}
	Assume that $x,y,z> 0$, then the following properties hold:
	\begin{enumerate}[1.]
		\item $\mathcal R(x,y,z)\geq 0$ and $\mathcal D(x,y,z)\geq0$.
		\item $\mathcal R(x,y,z)$ is decreasing with respect to $z$ and increasing with respect to $y$. $\mathcal D(x,y,z)$ is decreasing with respect to $y$ and $z$ and increasing with respect to $x$.
		\item We have
		\begin{equation*}
			\frac{x}{\mathcal R(x,y,z)} \leq 100(1+x)\left(1+e^{\frac{z}{2}}e^{-\frac{x+y}{2}}\right)
		\end{equation*}
		and
		\begin{equation*}
			\frac{x}{\mathcal D(x,y,z)} \leq 100(1+x)\left(1+e^{\frac{y+z}{2}}e^{-\frac{x}{2}}\right).
		\end{equation*}
	\end{enumerate}
\end{lemma}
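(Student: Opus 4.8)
The statement collects elementary analytic facts about $\mathcal D$ and $\mathcal R$, and the plan is to dispatch parts (1)--(2) by sign analysis of first derivatives and part (3) by a single convexity inequality together with a couple of algebraic simplifications.

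For $\mathcal D$ I would first observe that it depends on $(y,z)$ only through $s:=\tfrac{y+z}{2}$, so writing $\mathcal D=2\log\frac{e^{x/2}+e^{s}}{e^{-x/2}+e^{s}}$ reduces the analysis to two variables. Non-negativity is immediate since $e^{x/2}>e^{-x/2}$ forces the argument of the logarithm above $1$. Differentiating in $s$ produces a factor $e^{-x/2}-e^{x/2}<0$, so $\mathcal D$ is decreasing in $s$ and hence in both $y$ and $z$; differentiating in $x$ gives a sum of two positive terms, so $\mathcal D$ is increasing in $x$. For $\mathcal R$ the decisive move is to rewrite it as $\mathcal R=\log\frac{e^{x}D}{N}$ with $N=\cosh\frac{y}{2}+\cosh\frac{x+z}{2}$ and $D=\cosh\frac{y}{2}+\cosh\frac{x-z}{2}$. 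A direct computation gives
\[
e^{x}D-N=(e^{x}-1)\cosh\tfrac{y}{2}+\tfrac12 e^{-(x+z)/2}(e^{2x}-1),
\]
which is positive for $x>0$, so $\mathcal R\ge 0$ at once. For monotonicity, $\partial_y\mathcal R$ carries the sign of $\cosh\frac{x+z}{2}-\cosh\frac{x-z}{2}\ge 0$, giving that $\mathcal R$ increases in $y$; while clearing denominators in $\partial_z\mathcal R$ and using $\sinh\frac{x+z}{2}+\sinh\frac{x-z}{2}=2\sinh\frac{x}{2}\cosh\frac{z}{2}$ and $\sinh\frac{x+z}{2}\cosh\frac{x-z}{2}+\cosh\frac{x+z}{2}\sinh\frac{x-z}{2}=\sinh x$ reduces the sign to $2\cosh\frac{y}{2}\sinh\frac{x}{2}\cosh\frac{z}{2}+\sinh x\ge 0$, giving that $\mathcal R$ decreases in $z$.

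For the quantitative bounds in part (3) the one input I need is $\log(1+t)\ge\frac{t}{1+t}$ for $t>0$. For $\mathcal D$, write $\mathcal D=2\log\!\big(1+\frac{2\sinh(x/2)}{e^{-x/2}+e^{(y+z)/2}}\big)$; this inequality together with the identity $e^{-x/2}+2\sinh\frac{x}{2}=e^{x/2}$ collapses to $\mathcal D\ge\frac{4\sinh(x/2)}{e^{x/2}+e^{(y+z)/2}}$, whence
\[
\frac{x}{\mathcal D}\le\frac{x}{2(1-e^{-x})}\Big(1+e^{(y+z)/2}e^{-x/2}\Big),
\]
and the elementary bound $\frac{x}{1-e^{-x}}\le 1+x$ (equivalent to $1+x\le e^{x}$) closes the estimate, in fact with a constant far below $100$.

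The bound on $\mathcal R$ is the step that needs real care and I expect it to be the main obstacle. Applying $\log(1+t)\ge\frac{t}{1+t}$ to $\mathcal R=\log\frac{e^xD}{N}$ gives $\mathcal R\ge\frac{e^xD-N}{e^xD}$. Factoring $e^{2x}-1=(e^x-1)(e^x+1)$ in the numerator computed above and pulling $e^{x}$ out of $e^{x}D$, the ratio telescopes into
\[
\frac{e^{x}D-N}{e^{x}D}=(1-e^{-x})\,\frac{\cosh\frac{y}{2}+\tfrac12 e^{(x-z)/2}+\tfrac12 e^{-(x+z)/2}}{\cosh\frac{y}{2}+\tfrac12 e^{(x-z)/2}+\tfrac12 e^{(z-x)/2}},
\]
where numerator and denominator differ only in their last exponential term. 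Dropping the positive term $\tfrac12 e^{-(x+z)/2}$ and setting $A=\cosh\frac{y}{2}+\tfrac12 e^{(x-z)/2}$, $B=\tfrac12 e^{(z-x)/2}$, I obtain $\mathcal R\ge(1-e^{-x})\frac{A}{A+B}$, so that $\frac{x}{\mathcal R}\le\frac{x}{1-e^{-x}}\big(1+\frac{B}{A}\big)$. Finally $\frac{B}{A}\le\frac{e^{(z-x)/2}}{2\cosh(y/2)}\le e^{(z-x-y)/2}=e^{z/2}e^{-(x+y)/2}$ using $\cosh\frac{y}{2}\ge\tfrac12 e^{y/2}$, and again $\frac{x}{1-e^{-x}}\le 1+x$, which yields the claimed inequality. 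The only genuine subtlety is the bookkeeping that leaves the two fractions differing in a single harmless term; once that is arranged the estimate is clean and loses nothing essential.
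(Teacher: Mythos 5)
Your proposal is correct, and a remark on the comparison: the paper contains no proof of this statement at all --- it is imported verbatim from \cite{NWX23} (Lemma 27) --- so your argument is a self-contained verification rather than a variant of anything in the present text. I checked the pivotal computations. The factorization $e^xD-N=(e^x-1)\bigl(\cosh\tfrac y2+\tfrac12e^{(x-z)/2}+\tfrac12e^{-(x+z)/2}\bigr)$ is exact, and since $e^xD=e^x\bigl(\cosh\tfrac y2+\tfrac12e^{(x-z)/2}+\tfrac12e^{(z-x)/2}\bigr)$, your ratio identity in which the two fractions differ only in the final exponential term holds; dropping $\tfrac12e^{-(x+z)/2}$, bounding $B/A$ via $\cosh\tfrac y2\ge\tfrac12e^{y/2}$, and invoking $\log(1+t)\ge t/(1+t)$ together with $x/(1-e^{-x})\le 1+x$ (equivalent to $1+x\le e^x$) are all valid. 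The $\mathcal D$ chain likewise closes via $e^{-x/2}+2\sinh\tfrac x2=e^{x/2}$, giving $\mathcal D\ge 4\sinh(x/2)/(e^{x/2}+e^{(y+z)/2})$. For part (2), the reductions $\sinh\tfrac{x+z}2+\sinh\tfrac{x-z}2=2\sinh\tfrac x2\cosh\tfrac z2$ and the addition formula producing $\sinh x$ are correct, so $\partial_z\mathcal R=-\bigl(2\cosh\tfrac y2\sinh\tfrac x2\cosh\tfrac z2+\sinh x\bigr)/(2ND)\le 0$, and the sign analyses in $y$, in $s=(y+z)/2$, and in $x$ are all right. Your constants ($\tfrac12$ for the $\mathcal D$ bound, $1$ for the $\mathcal R$ bound) are far sharper than the stated $100$, which is harmless since the lemma only asserts an upper bound; the route --- reducing $\mathcal D$ to one variable, rewriting $\mathcal R=\log(e^xD/N)$ multiplicatively, and using the single convexity input $\log(1+t)\ge t/(1+t)$ --- is complete and needs no repair.
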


As a direct consequence  of Theorem \ref{thm McShane id} and  the monotonicity in Part (2) of Lemma \ref{thm estimation R,D}, we have 
\begin{theorem}
 On a surface $Y\in \M_{0,4}(L_1,L_2,L_3,L_4)$, the number of simple closed geodesics of length $\leq L$ which bound a pair of pants with the two boundaries of lengths $L_1$ and $L_2$ has the upper bound
\begin{equation}\label{eq counting in S04}
	\leq \min\left\{ \frac{L_1}{\mathcal R(L_1,L_2,L)}, \ \frac{L_2}{\mathcal R(L_2,L_1,L)},\ \frac{L_3}{\mathcal R(L_3,L_4,L)},\ \frac{L_4}{\mathcal R(L_4,L_3,L)} \right\}.
\end{equation}

 On a surface $Y\in \M_{1,2}(L_1,L_2)$, the number of simple closed geodesics of length $\leq L$ which bound a pair of pants with the two boundaries of lengths $L_1$ and $L_2$ has the upper bound
\begin{equation}\label{eq counting in S12 single}
	\leq \min\left\{ \frac{L_1}{\mathcal R(L_1,L_2,L)},\ \frac{L_2}{\mathcal R(L_2,L_1,L)} \right\}.
\end{equation}

 On a surface $Y\in \M_{1,2}(L_1,L_2)$, the number of unordered pairs of simple closed geodesics of total length $\leq L$ which bound a pair of pants with the boundary of length $L_1$ has the upper bound
\begin{equation}\label{eq counting in S12 pairs}
	\leq \min\left\{ \frac{L_1}{\mathcal D(L_1,L,0)},\ \frac{L_2}{\mathcal D(L_2,L,0)} \right\}.
\end{equation}
\end{theorem}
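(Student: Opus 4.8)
The plan is to read all three bounds off the McShane-Mirzakhani identity (Theorem~\ref{thm McShane id}) by choosing one boundary component as the distinguished one, discarding every summand except the single family that indexes the geodesics (or pairs) we wish to count, and then converting the resulting inequality ``$\sum (\cdots) \le L_1$'' into a cardinality bound via the non-negativity and monotonicity recorded in Lemma~\ref{thm estimation R,D}. The whole argument is the same three-line mechanism applied three times; only the topological identification of which summands are relevant changes.

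For \eqref{eq counting in S04}, first I would record the topology: a non-peripheral simple closed geodesic $\gamma\subset S_{0,4}$ separates the four boundaries into two pairs, and $\gamma$ bounds a pair of pants with $\beta_1$ and $\beta_2$ precisely when it separates $\{\beta_1,\beta_2\}$ from $\{\beta_3,\beta_4\}$. Such $\gamma$ are exactly the ones indexed by the $i=2$ term in the identity applied with $\beta_1$ distinguished. Keeping only that term and dropping all the other non-negative summands gives $\sum_\gamma \mathcal R(L_1,L_2,\ell(\gamma))\le L_1$, the sum ranging over the geodesics to be counted. Since $\mathcal R(L_1,L_2,\cdot)$ is decreasing in its third argument, each $\gamma$ with $\ell(\gamma)\le L$ contributes at least $\mathcal R(L_1,L_2,L)$, so their number is at most $L_1/\mathcal R(L_1,L_2,L)$. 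Running the same computation with $\beta_2$ distinguished (the same $\gamma$ also bounds a pair of pants with $\beta_2$), and with $\beta_3,\beta_4$ distinguished (those $\gamma$ equally separate $\{\beta_3,\beta_4\}$ off), yields the remaining three quotients, and the minimum gives \eqref{eq counting in S04}.

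For \eqref{eq counting in S12 single} the mechanism is identical: a geodesic $\gamma$ bounding a pair of pants with $\beta_1$ and $\beta_2$ splits $S_{1,2}$ into that pair of pants and a one-holed torus, so it is exactly an $i=2$ index in the identity with $\beta_1$ distinguished; keeping only that term gives the bound $L_1/\mathcal R(L_1,L_2,L)$, and distinguishing $\beta_2$ gives $L_2/\mathcal R(L_2,L_1,L)$. For \eqref{eq counting in S12 pairs}, a pair $\{\gamma_1,\gamma_2\}$ bounding a pair of pants $P$ with $\beta_1$ has complement a pair of pants with boundary $\gamma_1,\gamma_2,\beta_2$, so it simultaneously bounds a pair of pants with $\beta_2$; thus these pairs are precisely the $\mathcal D$-summands whether we distinguish $\beta_1$ or $\beta_2$, and discarding the $\mathcal R$-terms leaves $\sum_{\{\gamma_1,\gamma_2\}}\mathcal D(L_1,\ell(\gamma_1),\ell(\gamma_2))\le L_1$. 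The clean point, visible directly from the formula for $\mathcal D$, is that $\mathcal D(x,y,z)$ depends on $y,z$ only through $y+z$ and is decreasing in it; hence a pair of total length $\le L$ contributes at least $\mathcal D(L_1,L,0)$, giving the bound $L_1/\mathcal D(L_1,L,0)$, with $L_2/\mathcal D(L_2,L,0)$ from the symmetric choice.

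The analytic estimates here are immediate once the identity is in hand, so the only genuine content — and the step I would be most careful about — is the topological bookkeeping: checking that in each surface the geodesics or pairs being counted are in bijection with a single designated family of summands of the McShane-Mirzakhani identity, and that each alternative choice of distinguished boundary re-indexes the very same geometric objects (so that the various quotients really do all bound the same count, justifying the minimum). Everything else is a one-line application of Lemma~\ref{thm estimation R,D}.
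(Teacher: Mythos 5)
Your proposal is correct and is essentially the paper's own proof: the authors likewise derive all three bounds by keeping a single non-negative family of summands in the McShane--Mirzakhani identity (Theorem~\ref{thm McShane id}) for each choice of distinguished boundary, invoking the monotonicity in Lemma~\ref{thm estimation R,D}, and, for \eqref{eq counting in S12 pairs}, the fact that $\mathcal D(x,y,z)$ depends only on $y+z$. You merely spell out the topological re-indexing (e.g.\ that a pair bounding a pair of pants with $\beta_1$ in $S_{1,2}$ also bounds one with $\beta_2$) that the paper leaves implicit, which is a fine addition.
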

\bp
These three bounds follow from Theorem \ref{thm McShane id} and the monotonicity of $\mathcal D(x,y,z)$ and $\mathcal R(x,y,z)$. For \eqref{eq counting in S12 pairs}, we also apply the fact that $\mathcal D(x,y,z)$ only depends on $y+z$ but not on $y,z$ respectively. 
\ep

Then by applying the estimates in Lemma \ref{thm estimation R,D}, one may get upper bounds for \eqref{eq counting in S04}, \eqref{eq counting in S12 single} and \eqref{eq counting in S12 pairs}.

\section{Lower bound}\label{s-low}

In this section we compute the expected value of the number of figure-eight closed geodesics of length $\leq L$ over $\sM_g$, and hence deduce the lower bound of the non-simple systole for random hyperbolic surfaces.

For $X\in \sM_g$ and $L>0$, denote by 
\begin{equation}
	\sN_{\feight}(X,L) := \left\{\alpha\sbs X ;\ 
	\begin{array}{l}
		\alpha\ \text{is a figure-eight closed geodesic in}\ X  \\
		\text{and}\ \ell(\alpha)\leq L
	\end{array} \right\} \nonumber
\end{equation}
and 
\begin{equation}
	N_{\feight}(X,L) := \#
 \sN_{\feight}(X,L). \nonumber
\end{equation}
 Then $\ell_{sys}^{ns}(X)> L$ if and only if $N_{\feight}(X,L)=0$. 

The main result of this section is as follows.
\begin{proposition}\label{thm E[N_f8]=}
	For any $L\geq 2\arccosh 3$ and $g>2$, 
	\begin{equation*}
		\E[N_{\feight}(X,L)] = \frac{1}{8\pi^2 g}(L-3-4\log 2)e^L +O\left(\tfrac{1}{g} L^2 e^{\frac{1}{2}L} + \tfrac{1}{g^2} L^{4} e^{L}\right)
	\end{equation*}
where the implied constant is independent of $L$ and $g$.
\end{proposition}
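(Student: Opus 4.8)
The plan is to realize $N_{\feight}(X,L)$ as a sum over mapping-class-group orbits of embedded subsurfaces filled by a figure-eight geodesic, and to apply Mirzakhani's integration formula (Theorem \ref{thm Mirz int formula}) to each topological type. Since a figure-eight geodesic $\delta$ has a single transverse self-intersection, its regular neighborhood is an \emph{embedded} pair of pants $P$ whose three boundary curves are essential disjoint simple closed curves in $X$. Two possibilities arise: either the three geodesic representatives are distinct (so $P$ is an honest $S_{0,3}$ with geodesic boundary), or two of them are freely homotopic, in which case $\delta$ fills an embedded one-holed torus $S_{1,1}$. In the first case the complement $X\setminus P$ is either connected, giving $X\setminus\bigcup_i\gamma_i\simeq S_{0,3}\sqcup S_{g-2,3}$ (this produces the main term), or disconnected. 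Accordingly I split
\[
N_{\feight}(X,L)=N_{\mathrm{main}}(X,L)+N_{S_{1,1}}(X,L)+N_{\mathrm{disc}}(X,L)
\]
and treat the three pieces separately.

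For $N_{\mathrm{main}}$, each pair of pants carries three figure-eights (the remark after \eqref{eq figure-8 length}), the one winding around the boundaries of lengths $x_1,x_2$ having length $L(x_1,x_2,x_3)$ with $\cosh(L/2)=\cosh(x_3/2)+2\cosh(x_1/2)\cosh(x_2/2)$, symmetric in $x_1,x_2$. Counting ordered triples $(\gamma_1,\gamma_2,\gamma_3)$ with complement $S_{0,3}\sqcup S_{g-2,3}$ against the weight $\mathbf 1[L(x_1,x_2,x_3)\le L]$, Theorem \ref{thm Mirz int formula} gives
\[
\E[N_{\mathrm{main}}]=\frac{1}{2V_g}\int_{R} V_{g-2,3}(x_1,x_2,x_3)\,x_1x_2x_3\,dx,\qquad R=\{L(x_1,x_2,x_3)\le L\},
\]
using $V_{0,3}\equiv1$, that $C_\Gamma=1$ for this configuration, and the factor $\tfrac12$ accounting for the two orderings of the pair $\{\gamma_1,\gamma_2\}$ giving the same figure-eight. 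By Theorem \ref{thm Vgn(x) small x} the leading integrand is $V_{g-2,3}(x)\,x_1x_2x_3\approx 8\,V_{g-2,3}\prod_i\sinh(x_i/2)$, and the change of variables $p_i=\cosh(x_i/2)$ turns \eqref{eq figure-8 length} into the linear inequality $2p_1p_2+p_3\le\cosh(L/2)$. Thus the integral is $8V_{g-2,3}$ times the Euclidean volume of $\{p_i\ge1,\ 2p_1p_2+p_3\le\cosh(L/2)\}$, which is elementary and evaluates to $\tfrac{1}{32}(L-3-4\log2)e^L(1+o(1))$; combined with $V_{g-2,3}/V_g=\tfrac{1}{8\pi^2g}(1+O(1/g))$, obtained by chaining the ratios in Theorem \ref{thm Vgn/Vgn+1}, this produces the stated main term $\tfrac{1}{8\pi^2g}(L-3-4\log2)e^L$.

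The error terms come from three sources. First, the one-sided correction $(1-c(3)\sum x_i^2/g)$ in Theorem \ref{thm Vgn(x) small x}: since $\sum x_i^2\prec L^2$ on $R$, it yields an absolute contribution $\prec L^4e^L/g^2$, while the subleading terms of the exact Euclidean-volume formula (expanding $\cosh(L/2)$ in $e^{L/2}$) and the $O(1/g)$ in the volume ratio contribute $\prec L^2e^{L/2}/g$ and $\prec L^4e^L/g^2$ respectively. Second, the one-holed-torus type: here I cut along the two curves $(\gamma,\eta)$ with complement $S_{0,3}(w,w,z)\sqcup S_{g-1,1}(z)$, the relevant figure-eight satisfies $\cosh(\ell/2)=\cosh(z/2)+2\cosh^2(w/2)$, and the same $\cosh$-substitution together with $V_{g-1,1}/V_g\prec1/g$ gives $\E[N_{S_{1,1}}]\prec L^2e^{L/2}/g$. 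Third, the disconnected-complement type has at least two non-pants complementary pieces, so Theorem \ref{thm sum-prod-V} supplies an extra factor $1/g$ and bounds its contribution by $\prec L\,e^L/g^2$. All of these fit inside $O(L^2e^{L/2}/g+L^4e^L/g^2)$.

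The main obstacle is the topological bookkeeping: verifying that the only contribution at the leading order $e^L/g$ is the connected type $S_{0,3}\sqcup S_{g-2,3}$, pinning down the exact multiplicity and symmetry constant $\tfrac12$ (equivalently the value of $C_\Gamma$) that fixes the prefactor $\tfrac{1}{8\pi^2}$, and controlling the one-holed-torus type, which sits \emph{exactly} at the error threshold $L^2e^{L/2}/g$ and must therefore be estimated by its own Mirzakhani computation rather than absorbed into the main term.
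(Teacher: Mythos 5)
Your proposal is correct and follows essentially the same route as the paper's proof: the same decomposition of $N_{\feight}(X,L)$ by the topology of the complement of the filled pair of pants (your $N_{S_{1,1}}$ and $N_{\mathrm{disc}}$ are the paper's $N_{\feight}^{(g-1,1)}$ and the two disconnected types of Lemma \ref{thm E[figure-8] small term}), the same application of Mirzakhani's integration formula with the identical combinatorial constant $\tfrac12$ (the paper's $\tfrac16\cdot 3$, with $C_\Gamma=1$), the same use of Theorems \ref{thm Vgn/Vgn+1}, \ref{thm Vgn(x) small x} and \ref{thm sum-prod-V}, and the same error bookkeeping. The only difference is cosmetic: you evaluate the main integral by the substitution $p_i=\cosh(x_i/2)$, which linearizes the constraint to $2p_1p_2+p_3\le\cosh(L/2)$ and reduces it to a Euclidean volume (which indeed equals $\tfrac{1}{32}(L-3-4\log 2)e^L+O(Le^{L/2})$), whereas the paper changes variables to $t=L(x,y,z)$ and integrates iteratedly in $x,y,t$ — equivalent elementary computations giving the same constant.
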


Based on Proposition \ref{thm E[N_f8]=}, we prove the following result, which is the lower bound in Theorem \ref{mt-1}.
\begin{theorem}\label{thm lower bound of ns-sys}
	For any function $\omega(g)$ satisfying \[\lim \limits_{g\to \infty}\omega(g)= +\infty \ \textit{and} \ \lim \limits_{g\to \infty}\frac{\omega(g)}{\log\log g} = 0,\]  we have 
	$$
	\lim \limits_{g\to \infty} \Prob\left(X\in \M_g ;\  \lns(X) > \log g -\log\log g - \omega(g) \right)=1.
	$$ 
\end{theorem}
\begin{proof}
	Taking $L=L_g=\log g -\log\log g - \omega(g)$ in Proposition \ref{thm E[N_f8]=}, it is clear that $$\E[N_{\feight}(X,L_g)]\to 0, \textit{ as $g\to\infty$}.$$ Then it follows from Markov's inequality that
	$$\Prob\left(X\in \sM_g; \ N_{\feight}(X,L_g)\geq 1 \right)\leq \E[N_{\feight}(X,L_g)]\to 0, \ \textit{as $g\to \infty$}.$$
This also means that
\[\lim\limits_{g\to \infty}\Prob\left(X\in \sM_g; \ N_{\feight}(X,L_g)=0 \right)=1.\] Then the conclusion follows because for any $X\in \sM_g$, $\ell^{ns}_{sys}(X)$ is equal to the length of a shortest figure-eight closed geodesic in $X$. 
\end{proof}

\begin{figure}[htbp]
  \centering
  \begin{subfigure}[b]{0.45\linewidth}
      \centering
      \includegraphics[width=\linewidth]{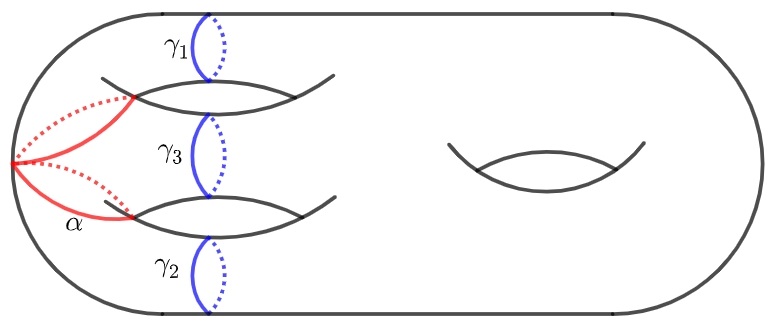}
      \caption{$X\setminus \overline{P(\gamma_1,\gamma_2,\gamma_3)}\cong S_{1,3}$.}
      \label{}
  \end{subfigure}
  \hfill
  \begin{subfigure}[b]{0.45\linewidth}
      \centering
      \includegraphics[width=\linewidth]{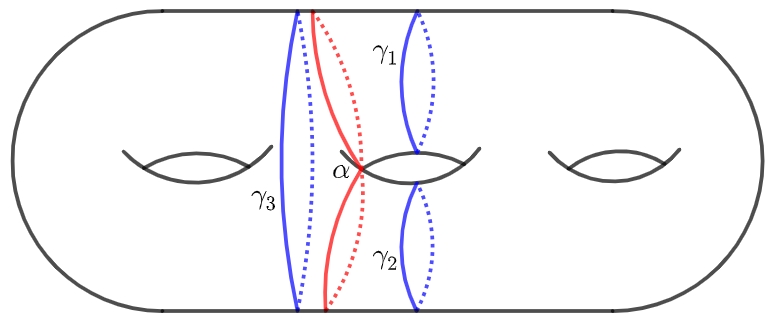}
      \caption{$X\setminus \overline{P(\gamma_1,\gamma_2,\gamma_3)}\cong S_{1,1}\cup S_{1,2}$.}
      \label{}
  \end{subfigure}
  \\
  \begin{subfigure}[b]{0.45\linewidth}
      \centering
      \includegraphics[width=\linewidth]{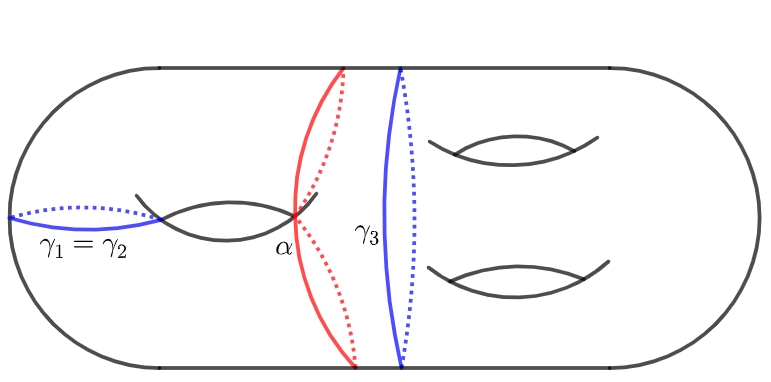}
      \caption{$X\setminus \overline{P(\gamma_1,\gamma_2,\gamma_3)}\cong S_{2,1}$.}
      \label{}
  \end{subfigure}
  \hfill
  \begin{subfigure}[b]{0.45\linewidth}
      \centering
      \includegraphics[width=\linewidth]{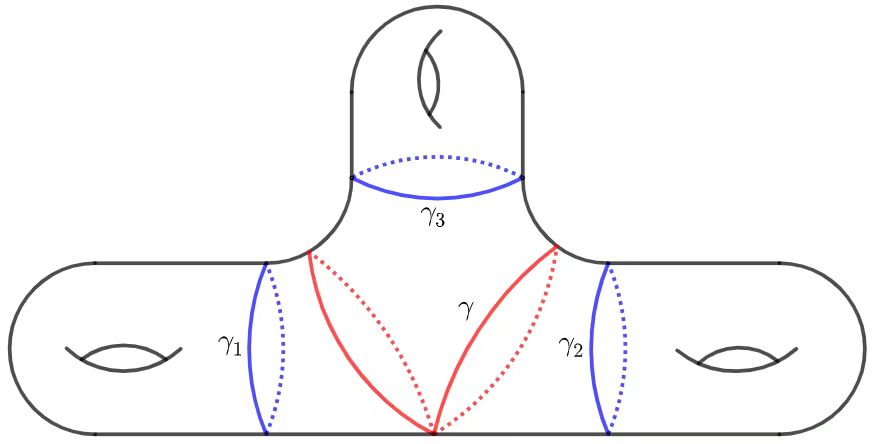}
      \caption{$X\setminus \overline{P(\gamma_1,\gamma_2,\gamma_3)}\cong S_{1,1}\cup S_{1,1}\cup S_{1,1}$.}
      \label{}
  \end{subfigure}
  \caption{Figure-eight geodesics}
  \label{fig:figure-8-in-surface}
\end{figure}

 A figure-eight closed geodesic $\alpha\subset X\in \sM_g$ is always filling in a unique pair of pants $P(\gamma_1,\gamma_2,\gamma_3)$ as shown in Figure \ref{fig:figure-8}. If two of the boundary geodesics of $P(\gamma_1,\gamma_2,\gamma_3)$ coincide in $X$, then the completion $\overline{P(\gamma_1,\gamma_2,\gamma_3)}\subset X$ of $P(\gamma_1,\gamma_2,\gamma_3)$ is a hyperbolic torus with one geodesic boundary (see picture (C) in Figure \ref{fig:figure-8-in-surface}); otherwise $\overline{ P(\gamma_1,\gamma_2,\gamma_3)}\subset X$ is still a pair of pants. So the complement $X\setminus\overline{P(\gamma_1,\gamma_2,\gamma_3)}$ may have one or two, or three connected components (see pictures (A), (B), (D) in Figure \ref{fig:figure-8-in-surface} for illustrations). We classify all figure-eight closed geodesics by the topology of $X\setminus \overline{P(\gamma_1,\gamma_2,\gamma_3)}$. For hyperbolic surface $X$ and  $L>0$, denote
\begin{eqnarray*}
	\sN_{\feight}^{(g-2,3)}(X,L)&:=&\left\{\alpha\in\sN_{\feight}(X,L);\ X\setminus\overline{P(\gamma_1,\gamma_2,\gamma_3)}\cong S_{g-2,3} \right\}, 
\end{eqnarray*}
\begin{eqnarray*}
	\sN_{\feight}^{(g-1,1)}(X,L)&:=&\left\{\alpha\in\sN_{\feight}(X,L);\ X\setminus \overline{P(\gamma_1,\gamma_2,\gamma_3)}\cong S_{g-1,1} \right\}, 
\end{eqnarray*}
\begin{eqnarray*}
	\sN_{\feight}^{(g_1,1)(g_2,2)}(X,L):=\left\{\alpha\in\sN_{\feight}(X,L);\ \begin{matrix}X\setminus \overline{P(\gamma_1,\gamma_2,\gamma_3)}\\ \cong S_{g_1,1}\cup S_{g_2,2} \end{matrix}\right\}, \nonumber
\end{eqnarray*}
\begin{eqnarray*}
	\sN_{\feight}^{(g_1,1)(g_2,1)(g_3,1)}(X,L):=\left\{\alpha\in\sN_{\feight}(X,L);\ \begin{matrix}X\setminus \overline{P(\gamma_1,\gamma_2,\gamma_3)}\\ \cong S_{g_1,1}\cup S_{g_2,1}\cup S_{g_3,1} \end{matrix}\right\},\nonumber
\end{eqnarray*}
and
\begin{eqnarray*}
	&N_{\feight}^{(g-2,3)}(X,L):= \#\sN_{\feight}^{(g-2,3)}(X,L), \\
	&N_{\feight}^{(g-1,1)}(X,L):= \#\sN_{\feight}^{(g-1,1)}(X,L), \\
	&N_{\feight}^{(g_1,1)(g_2,2)}(X,L):= \#\sN_{\feight}^{(g_1,1)(g_2,2)}(X,L), \\
	&N_{\feight}^{(g_1,1)(g_2,1)(g_3,1)}(X,L):= \#\sN_{\feight}^{(g_1,1)(g_2,1)(g_3,1)}(X,L),
\end{eqnarray*}
where $(g_1,g_2)$ satisfies
$$g_1+g_2=g-1\text{ and }g_1,g_2\geq 1;$$ and $(g_1,g_2,g_3)$ satisfies  $$g_1+g_2+g_3=g\text{ and }g_1\geq g_2\geq g_3\geq 1.$$
Then 
\begin{equation}
\label{eq N_f8=}\begin{aligned}
	N_{\feight}(X,L) &= N_{\feight}^{(g-2,3)}(X,L) + \sum_{(g_1,g_2)} N_{\feight}^{(g_1,1)(g_2,2)}(X,L) \\
	&+N_{\feight}^{(g-1,1)}(X,L) +\sum_{(g_1,g_2,g_3)} N_{\feight}^{(g_1,1)(g_2,1)(g_3,1)}(X,L)
    \end{aligned}
\end{equation}
where $(g_1,g_2)$, $(g_1,g_2,g_3)$ are taken over all possible pairs of positive integers.

We now compute $\E\left[N_{\feight}^{(g-2,3)}\right]$, $\E\left[N_{\feight}^{(g-1,1)}\right]$ and sum of all possible \\ $\E\left[N_{\feight}^{(g_1,1)(g_2,2)}\right]$ and $\E\left[N_{\feight}^{(g_1,1)(g_2,1)(g_3,1)}\right]$ in the following Lemmas.

\begin{lemma}\label{thm E[figure-8] main term}
	For any $L\geq 2\arccosh 3$ and $g>2$, 
\begin{equation*}
	\E\left[N_{\feight}^{(g-2,3)}(X,L)\right] = \frac{1}{8\pi^2 g}(L-3-4\log 2)e^L +O\left(\tfrac{1}{g} L e^{\frac{1}{2}L} + \tfrac{1}{g^2} L^{4} e^{L}\right)
\end{equation*}
where the implied constant is independent of $L$ and $g$.
\end{lemma}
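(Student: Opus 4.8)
The plan is to realize $N_{\feight}^{(g-2,3)}(X,L)$ as a geometric counting function and apply Mirzakhani's integration formula (Theorem \ref{thm Mirz int formula}). I fix the topological type $\Gamma=(\gamma_1,\gamma_2,\gamma_3)$ of an ordered triple of disjoint simple closed curves cobounding a pair of pants with $X\setminus(\gamma_1\cup\gamma_2\cup\gamma_3)\cong S_{0,3}\sqcup S_{g-2,3}$. By the change of coordinates principle $\Mod_g$ acts transitively on such ordered triples, and since both complementary pieces admit mapping classes realizing every permutation of their three boundaries, all $3!$ orderings of a given unordered triple lie in $\mathcal O_\Gamma$. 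Recalling that each such pair of pants carries exactly three figure-eight geodesics, of lengths $L(x,y,z),L(y,z,x),L(z,x,y)$ governed by \eqref{eq figure-8 length}, I set
\[F(x,y,z)=\mathbf 1_{\{L(x,y,z)\leq L\}}+\mathbf 1_{\{L(y,z,x)\leq L\}}+\mathbf 1_{\{L(z,x,y)\leq L\}},\]
which is symmetric in its arguments. Then $F^\Gamma=6\,N_{\feight}^{(g-2,3)}(X,L)$, so it remains to evaluate $\tfrac16\E[F^\Gamma]$.

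Applying Theorem \ref{thm Mirz int formula} with $V_{g,n}(\Gamma,x)=V_{0,3}(x,y,z)\,V_{g-2,3}(x,y,z)=V_{g-2,3}(x,y,z)$ gives
\[\E[N_{\feight}^{(g-2,3)}(X,L)]=\frac{C_\Gamma}{6\,V_g}\int_{\R_{\geq0}^3}F(x,y,z)\,V_{g-2,3}(x,y,z)\,xyz\,dx\,dy\,dz.\]
Because the complementary pieces $S_{0,3}$ and $S_{g-2,3}$ are non-homeomorphic and no individual $\gamma_i$ admits a side-reversing symmetry, there are no hidden half-twist factors and $C_\Gamma=1$; confirming this bookkeeping is one delicate point. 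I then invoke the volume sandwich (Theorem \ref{thm Vgn(x) small x}) to replace $V_{g-2,3}(x,y,z)$ by $V_{g-2,3}\prod_i\frac{\sinh(x_i/2)}{x_i/2}$ at the cost of a multiplicative error of size $O(\sum_i x_i^2/g)$, and use $\frac{\sinh(x_i/2)}{x_i/2}\cdot x_i=2\sinh(x_i/2)$ to reduce the weighted integrand to $8\prod_i\sinh(x_i/2)$.

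The core computation is the resulting elementary integral. By symmetry of $\prod_i\sinh(x_i/2)$ the three indicators contribute equally, reducing matters to $\int_{\{L(x,y,z)\leq L\}}\sinh(x/2)\sinh(y/2)\sinh(z/2)\,dx\,dy\,dz$. Substituting $a=\cosh(x/2)$, $b=\cosh(y/2)$, $c=\cosh(z/2)$ turns the constraint into $c+2ab\leq\cosh(\tfrac L2)$ over $a,b,c\geq1$ and the Jacobian absorbs the $\sinh$-factors, leaving $\int_{\{a,b,c\geq1,\ c+2ab\leq T\}}da\,db\,dc$ with $T=\cosh(\tfrac L2)$. I would evaluate this exactly; expanding in powers of $e^{L/2}$, the leading term is $\tfrac{e^L}{32}(L-3-4\log2)$, with the next term of order $Le^{L/2}$. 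Combining with $\tfrac{V_{g-2,3}}{V_g}=\tfrac{1}{8\pi^2 g}\left(1+O(1/g)\right)$ (Theorem \ref{thm Vgn/Vgn+1}, via the chain $V_g\to V_{g-1,2}\to V_{g-2,4}\to V_{g-2,3}$) produces the main term $\tfrac{1}{8\pi^2 g}(L-3-4\log2)e^L$.

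Finally I collect the errors: the subleading $O(Le^{L/2})$ term of the elementary integral yields the $\tfrac1g Le^{L/2}$ error, while the $O(\sum_i x_i^2/g)$ sandwich correction integrated against the $\sinh$-weights (on the effective region $x_i\prec L$), together with the $O(1/g)$ in the volume ratio, contributes $\prec\tfrac1{g^2}L^4e^L$. The hard part will be the precise evaluation and expansion of the triple integral: the linear-in-$L$ coefficient arises from a logarithmic factor $\log(T/2)$, and since the first correction is only $e^{-L/2}$-suppressed relative to the main term, one must expand carefully both to isolate the exact constant $-3-4\log2$ and to certify that the remainder is genuinely $O(Le^{L/2})$ rather than a larger multiple of $e^L$.
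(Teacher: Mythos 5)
Your proposal is correct and follows essentially the same route as the paper's proof: the same reduction from figure-eight geodesics to (ordered) pants-boundary triples with the factor $\tfrac16$ and $C_\Gamma=1$, the same application of Mirzakhani's integration formula together with Theorem \ref{thm Vgn/Vgn+1} and the sandwich Theorem \ref{thm Vgn(x) small x}, and the same error bookkeeping (the sandwich correction integrated over $\{x+y+z\leq 2L\}$ giving $\tfrac{1}{g^2}L^4e^L$, the integral's subleading term giving $\tfrac1g Le^{L/2}$). The only divergence is cosmetic: you evaluate the elementary triple integral via the substitution $a=\cosh(x/2)$, $b=\cosh(y/2)$, $c=\cosh(z/2)$, reducing to the exactly computable volume of $\{a,b,c\geq 1,\ c+2ab\leq \cosh(L/2)\}$, whereas the paper changes variables to $t=L(x,y,z)$ and integrates iteratively in $x,y,t$; both expansions produce the same main term and the same $O(Le^{L/2})$ remainder.
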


\begin{proof}

   Consider function $\ell:\mathbb{R}^3_{\geq 0}\to\mathbb{R}_{\geq 0}$ which is determined by $$\cosh\frac{\ell(x_1,x_2,x_3)}{2}=\cosh\frac{x_3}{2}+2\cosh\frac{x_1}{2}\cosh\frac{x_2}{2}$$
    and for $L>0$, define $$\phi_L(x_1,x_2,x_3)=\mathbf{1}_{[0,L]}(\ell(x_1,x_2,x_3)) + \mathbf{1}_{[0,L]}(\ell(x_3,x_1,x_2)) + \mathbf{1}_{[0,L]}(\ell(x_2,x_3,x_1)).$$
    Then from \eqref{eq figure-8 length}, one may conclude that for any pair of pants $P(\gamma_1,\gamma_2,\gamma_3)$, it contains exactly $\phi_L(\ell(\gamma_1),\ell(\gamma_2),\ell(\gamma_3))$ figure-eight closed geodesics of lengths $\leq L$. Conversely, for any figure-eight closed geodesic in $X$, it is contained in a unique pair of pants. Assume $\Gamma=(\alpha_1,\alpha_2,\alpha_3)$ is a simple closed multi-curve  such that $$X\setminus\{\alpha_1,\alpha_2,\alpha_3\}\cong S_{0,3}\cup S_{g-2,3}.$$ 
Then we have 
$$N_{\feight}^{(g-2,3)}(X,L)=\frac{1}{6}\sum\limits_{(\beta_1,\beta_2,\beta_3)\in\mathcal{O}_{\Gamma}}\phi_L(\ell_{\beta_1}(X),\ell_{\beta_2}(X),\ell_{\beta_3}(X))$$
where $\mathcal{O}_{\Gamma}$ is the orbit of $\Gamma$ introduced in subsection \ref{sec mirz int fml}.
    
    Applying Mirzakhani's integration formula Theorem \ref{thm Mirz int formula} to function $\phi_L(x_1,x_2,x_3)$ and $\Gamma$ together with the fact that $C_\Gamma = 1$, we have
	\begin{equation}
	\label{eq E[N_f8^(g-2,3)]}
    \begin{aligned}
		& \E\left[N_{\feight}^{(g-2,3)}(X,L)\right] \\
           =&\frac{1}{6} \E\left[\sum\limits_{(\beta_1,\beta_2,\beta_3)\in\mathcal{O}_{\Gamma}}\phi_L(\ell_{\beta_1}(X),\ell_{\beta_2}(X),\ell_{\beta_3}(X))\right]\\
		=& \frac{1}{6} \int_{x_1,x_2,x_3\geq 0} \phi_{L}(x_1,x_2,x_3)\cdot
        \frac{V_{g-2,3}(x_1,x_2,x_3)V_{0,3}(x_1,x_2,x_3)}{V_g}\\
        \cdot &x_1x_2x_3\ dx_1dx_2dx_3 \\
		=& \frac{1}{2} \int_{x_1,x_2,x_3\geq 0} \mathbf{1}_{[0,L]}(\ell(x_1,x_2,x_3))\cdot \frac{V_{g-2,3}(x_1,x_2,x_3)}{V_g} x_1x_2x_3\ dx_1dx_2dx_3 
        \end{aligned}
	\end{equation}
	where  in the last equation we apply that $V_{0,3}(x_1,x_2,x_3)=1$ and the product $V_{g-2,3}(x_1,x_2,x_3)x_1x_2x_3$ is symmetric with respect to $x_1,x_2,x_3$. By Part (2) of Theorem \ref{thm Vgn/Vgn+1} we have
\be\label{e-vol-1}
\frac{V_{g-2,3}}{V_g}=\frac{1}{8\pi^2 g}\cdot \left(1+O\left(\frac{1}{g}\right)\right).
\ene
This together with Theorem \ref{thm Vgn(x) small x} implies that 
\begin{equation}\label{eq V_g-2,3(x,y,z)}
\begin{aligned}
	&\frac{V_{g-2,3}(x_1,x_2,x_3)}{V_g} x_1x_2x_3 \\
    =& \frac{1}{\pi^2 g} \sinh\tfrac{x_1}{2}\sinh\tfrac{x_2}{2}\sinh\tfrac{x_3}{2} \cdot \left(1+O\left(\tfrac{1+x_1^2+x_2^2+x_3^2}{g}\right)\right).
    \end{aligned}
\end{equation}
From \eqref{eq V_g-2,3(x,y,z)} and the fact that $\ell(x_1,x_2,x_3)>\tfrac{1}{2}(x_1+x_2+x_3)$, we have
\begin{equation}\label{eq E[N_f8^(g-2,3)] remainder}
\begin{aligned}
	&\left|\frac{1}{2} \int_{\{\ell(x_1,x_2,x_3)\leq L\}}\! \frac{1}{\pi^2 g}\!\sinh\tfrac{x_1}{2}\!\sinh\tfrac{x_2}{2}\!\sinh\tfrac{x_3}{2}\!\cdot O\left(\tfrac{1+x_1^2+x_2^2+x_3^2}{g}\right) dx_1dx_2dx_3\right| \\
	\prec& \frac{1}{g^2} \int_{\{x_1+x_2+x_3\leq 2L\}} \left(1+x_1^2+x_2^2+x_3^2\right) e^{\tfrac{1}{2}(x_1+x_2+x_3)} dx_1dx_2dx_3 \\
    \prec& \frac{1}{g^2} L^{4} e^{L}. 
\end{aligned}
\end{equation}
Now we consider the term
\begin{equation}
	\frac{1}{2} \int_{x_1,x_2,x_3\geq 0} \mathbf{1}_{[0,L]}(\ell(x_1,x_2,x_3)) \frac{1}{\pi^2 g}\sinh\tfrac{x_1}{2}\sinh\tfrac{x_2}{2}\sinh\tfrac{x_3}{2} dx_1dx_2dx_3.
\end{equation}
 Change the variables $(x_1,x_2,x_3)$ into $(x_1,x_2,t)$ with $t=\ell(x_1,x_2,x_3)$. By \eqref{eq figure-8 length}, 
\begin{equation}
	\tfrac{1}{2}\sinh\tfrac{t}{2}dt = \tfrac{1}{2}\sinh\tfrac{x_3}{2} dx_3 + * dx_1 + * dx_2. \nonumber
\end{equation}
So 
\begin{equation}\label{eq-e-1}
\begin{aligned}
	&\frac{1}{2} \int_{x_1,x_2,x_3\geq 0} \mathbf{1}_{[0,L]}(\ell(x_1,x_2,x_3)) \frac{1}{\pi^2 g}\sinh\tfrac{x_1}{2}\sinh\tfrac{x_2}{2}\sinh\tfrac{x_3}{2} dx_1dx_2dx_3 \\
	=& \int_{\textbf{Cond}}  \frac{1}{2\pi^2 g} \sinh\tfrac{x_1}{2}\sinh\tfrac{x_2}{2}\sinh\tfrac{t}{2} dx_1dx_2dt \nonumber 
    \end{aligned}
\end{equation}
where the integration region $\textbf{Cond}$ is
\begin{equation}
	\begin{cases}
		x_1,x_2,x_3\geq 0 \\ \ell(x_1,x_2,x_3)\leq L 
	\end{cases} 
	\Longleftrightarrow 
	\begin{cases}
		x_1\geq 0, \ \cosh\tfrac{x_1}{2}\leq \frac{\cosh\tfrac{t}{2}-1}{2\cosh\tfrac{x_2}{2}} \\
		x_2\geq 0, \ 2\cosh\tfrac{x_2}{2}\leq \cosh\tfrac{t}{2}-1 \\ 
		0\leq t\leq L,\ \cosh\tfrac{t}{2}\geq 3
	\end{cases}.\nonumber
\end{equation}
We consider the integral for $x_1,\ x_2$ and $t$ in order. First taking an integral for $x_1$,
\begin{equation}\label{eq int Cond_x}
	\int_{x_1\geq 0} \mathbf{1}_{\left\{\cosh\tfrac{x_1}{2}\leq \frac{\cosh\tfrac{t}{2}-1}{2\cosh\tfrac{x_2}{2}}\right\}}(x_1,x_2,t) \cdot  \frac{1}{2\pi^2 g}\sinh\tfrac{x_1}{2} dx_1 
	= \frac{1}{\pi^2 g} \left(\frac{\cosh\tfrac{t}{2}-1}{2\cosh\tfrac{x_2}{2}}-1\right). \nonumber
\end{equation}
Then taking an integral for $x_2$,
\begin{eqnarray}\label{eq int Cond_y}
	&&\int_{x_2\geq 0} \mathbf{1}_{\left\{ 2\cosh\tfrac{x_2}{2}\leq \cosh\tfrac{t}{2}-1 \right\}}(x_2,t) \cdot \sinh\tfrac{x_2}{2} \cdot\frac{1}{\pi^2 g} \left(\frac{\cosh\tfrac{t}{2}-1}{2\cosh\tfrac{x_2}{2}}-1\right) dy \nonumber \\
	&&= \frac{1}{2\pi^2 g}\left(\cosh\tfrac{t}{2}-1\right) \left(2\log\left(\tfrac{\cosh\tfrac{t}{2}-1}{2}\right) \right)  -  \frac{2}{\pi^2 g}\left(\tfrac{\cosh\tfrac{t}{2}-1}{2}-1\right) \nonumber\\
	&&= \frac{4}{\pi^2 g}\sinh^2\tfrac{t}{4} \log\left(\sinh\tfrac{t}{4}\right)- \frac{2}{\pi^2 g}\sinh^2\tfrac{t}{4} +\frac{2}{\pi^2 g}. \nonumber
\end{eqnarray}
 It is clear that $$\log\left(\sinh\left(\tfrac{t}{4}\right)\right) =\tfrac{t}{4}-\log 2 +O\left(e^{-\frac{t}{2}}\right).$$
Finally taking an integral for $t$, we have
\begin{align}\label{eq E[N_f8^(g-2,3)] main}
\nonumber	&\int_{\textbf{Cond}}  \frac{1}{2\pi^2 g} \sinh\tfrac{x_1}{2}\sinh\tfrac{x_2}{2}\sinh\tfrac{t}{2} dx_1dx_2dt \\
\nonumber	=& \int_{2\arccosh 3}^{L} \sinh\tfrac{t}{2}\bigg( \frac{4}{\pi^2 g} \sinh^2\tfrac{t}{4} \log\left(\sinh\tfrac{t}{4}\right) \\
	&\quad\quad  - \frac{2}{\pi^2 g}\sinh^2\tfrac{t}{4} +\frac{2}{\pi^2 g} \bigg) dt\\
\nonumber	=& \int_{2\arccosh 3}^{L} \sinh\tfrac{t}{2} \bigg( \frac{4}{\pi^2 g}\sinh^2\tfrac{t}{4}\cdot \left(\tfrac{t}{4}-\log 2 +O\left(e^{-\frac{t}{2}}\right)\right)\\
\nonumber	&\quad\quad  - \frac{2}{\pi^2 g}\sinh^2\tfrac{t}{4} +\frac{2}{\pi^2 g} \bigg) dt \\
\nonumber	=& \frac{1}{8\pi^2 g}(L-3-4\log 2)e^L +O\left(\tfrac{1}{g}L e^{\frac{L}{2}}\right). 
\end{align}
Combining \eqref{eq E[N_f8^(g-2,3)]}, \eqref{eq E[N_f8^(g-2,3)] remainder}, \eqref{eq-e-1} and \eqref{eq E[N_f8^(g-2,3)] main}, we obtain 
\begin{equation}
	\E[N_{\feight}^{(g-2,3)}(X,L)] = \frac{1}{8\pi^2 g}(L-3-4\log 2)e^L +O\left(\tfrac{1}{g}L e^{\frac{L}{2}} + \tfrac{1}{g^2} L^{4} e^{L}\right) \nonumber
\end{equation}
as desired.
\end{proof}

\begin{rem*}
Similar computations were taken in \cite{AM23}.
\end{rem*}

\begin{lemma}\label{thm E[figure-8] small term}
For any $L\geq 2\arccosh 3$ and $g>2$ we have 
\begin{equation*}
	\E\left[N_{\feight}^{(g-1,1)}(X,L)\right] \prec \frac{1}{g}L^2 e^{\frac{L}{2}},
\end{equation*}
\begin{equation*}
	\sum_{(g_1,g_2)} \E\left[N_{\feight}^{(g_1,1)(g_2,2)}(X,L)\right] \prec \frac{1}{g^2}L^2 e^L,
\end{equation*}
\begin{equation*}
	\sum_{(g_1,g_2,g_3)} \E\left[N_{\feight}^{(g_1,1)(g_2,1)(g_3,1)}(X,L)\right] \prec \frac{1}{g^3}L^2 e^L,
\end{equation*}
where the first sum is taken over all $(g_1,g_2)$ with $g_1+g_2=g-1$ and $g_1,g_2\geq 1$; the second sum is taken over all $(g_1,g_2,g_3)$ with $g_1+g_2+g_3=g$ and $g_1\geq g_2\geq g_3\geq 1$. The implied constants are independent of $L$ and $g$.
\end{lemma}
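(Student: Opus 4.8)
The plan is to handle all three estimates by the same scheme as in the proof of Lemma~\ref{thm E[figure-8] main term}: apply Mirzakhani's integration formula (Theorem~\ref{thm Mirz int formula}) to the appropriate multicurve, then bound the resulting volume integral from above. Since only upper bounds are needed, I would discard the factor $C_\Gamma\in(0,1]$ and use that each pair of pants carries exactly three figure-eight geodesics, so the counting function is bounded by the constant $3$. The only genuinely new feature relative to the main term is that the complementary subsurface now ranges over all admissible genus partitions, so those sums must be controlled by Theorem~\ref{thm sum-prod-V}. I would first record the three multicurves. For $N_{\feight}^{(g-1,1)}$ the completed pair of pants is a one-holed torus, so the relevant multicurve is $(\gamma,\delta)$, where $\delta$ (length $z$) separates off the one-holed torus and $\gamma$ (length $x$) is the interior curve whose removal opens it into $P(x,x,z)$; cutting $X$ yields $S_{0,3}(x,x,z)\sqcup S_{g-1,1}(z)$. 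For the other two cases the multicurve is the genuine boundary $(\gamma_1,\gamma_2,\gamma_3)$ of $P(x,y,z)$, and cutting yields $S_{0,3}(x,y,z)\sqcup S_{g_1,1}(x)\sqcup S_{g_2,2}(y,z)$ and $S_{0,3}(x,y,z)\sqcup S_{g_1,1}(x)\sqcup S_{g_2,1}(y)\sqcup S_{g_3,1}(z)$ respectively.

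In each case Theorem~\ref{thm Mirz int formula} produces an integrand equal to a product of the $V_{g_i,n_i}(\cdot)$ times the Jacobian factors ($xyz$, or $xz$ in the torus case). I would bound each volume polynomial by the upper estimate of Theorem~\ref{thm Vgn(x) small x}, $V_{g_i,n_i}(x_1,\dots)\le V_{g_i,n_i}\prod_j\frac{\sinh(x_j/2)}{x_j/2}$, so that every length variable attached to a nontrivial piece is absorbed into $\sinh(x_j/2)\prec e^{x_j/2}$. The prefactor $\tfrac1{V_g}\prod_i V_{g_i,n_i}$ is then estimated using volume ratios: Theorem~\ref{thm Vgn/Vgn+1} gives $\frac{V_{g-1,1}}{V_g}\asymp\frac1g$, which settles $N_{\feight}^{(g-1,1)}$ directly, while for the two disconnected cases I would sum $\sum V_{g_1,1}V_{g_2,2}$ and $\sum V_{g_1,1}V_{g_2,1}V_{g_3,1}$ over the partitions by Theorem~\ref{thm sum-prod-V} with $r=2g-3$ (so $W_r=V_{g-1,1}$), obtaining $\prec\tfrac1g V_{g-1,1}$ and $\prec\tfrac1{g^2}V_{g-1,1}$, hence prefactors $\prec\tfrac1{g^2}$ and $\prec\tfrac1{g^3}$ after dividing by $V_g$. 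This genus summation, which manufactures the extra powers of $1/g$, is the core of the argument.

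It remains to bound the length integral over the region where some figure-eight has length $\le L$. For the two cases with three distinct boundaries all three lengths sit in an exponential, and I would use only the crude inclusion $\{L(x,y,z)\le L\}\subset\{x+y+z\le 2L\}$ coming from $L(x,y,z)>\tfrac12(x+y+z)$; this gives $\int_{x+y+z\le 2L}e^{(x+y+z)/2}\,dx\,dy\,dz\prec L^2e^L$ and hence the claimed $\tfrac1{g^2}L^2e^L$ and $\tfrac1{g^3}L^2e^L$. The torus case $N_{\feight}^{(g-1,1)}$ is the one needing care: its integral is the two-dimensional $\int x\,e^{z/2}\,dx\,dz$ (the interior length $x$ carries no exponential volume factor, only the linear Jacobian), and the crude perimeter bound $2x+z<2L$ over-counts, since it would allow $e^{z/2}$ to reach $e^L$. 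Instead I would read off from \eqref{eq figure-8 length} that $L(x,x,z)\le L$ forces $\cosh(z/2)\le\cosh(L/2)$ and $2\cosh^2(x/2)\le\cosh(L/2)$, i.e.\ $z\le L$ and $x\le L$ \emph{separately}; the $x$-integral then runs over a range of length $\prec L$ and contributes $\prec L^2$, while $\int_0^L e^{z/2}\,dz\prec e^{L/2}$, giving $\prec L^2e^{L/2}$ in total (the remaining figure-eight type $L(x,z,x)\le L$ only yields the smaller region $x+z\le L$ and is negligible). Combined with $\frac{V_{g-1,1}}{V_g}\asymp\frac1g$ this produces $\tfrac1g L^2e^{L/2}$. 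The main obstacle is therefore twofold: packaging the genus sums through Theorem~\ref{thm sum-prod-V} to extract decay in $g$, and, in the one-holed-torus case, replacing the lossy perimeter estimate by the sharper separate constraints $z\le L$ and $x\le L$ read directly from the figure-eight length formula.
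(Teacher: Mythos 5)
Your proposal matches the paper's proof essentially step for step: you bound the count by three figure-eight geodesics per pair of pants, apply Mirzakhani's integration formula to the same multicurves (in particular the two-curve system yielding $V_{0,3}(x,x,z)V_{g-1,1}(z)\,xz\,dxdz$ in the one-holed-torus case), control the volume factors via Theorem \ref{thm Vgn(x) small x}, Theorem \ref{thm Vgn/Vgn+1}, and Theorem \ref{thm sum-prod-V} with $r=2g-3$ and $W_{2g-3}=V_{g-1,1}$ to extract the $1/g$, $1/g^2$, $1/g^3$ prefactors, and use the region $x+y+z\le 2L$ for the disconnected cases but the separate constraints from \eqref{eq figure-8 length} in the torus case. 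Your observation that the crude perimeter bound $2x+z\le 2L$ would only give $\prec e^L$ there, so the separate bounds $x\le L$ and $z\le L$ are needed for the stated $L^2e^{L/2}$, is correct and is precisely how the paper argues.
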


\begin{proof}
Consider the following three special types of simple closed multi-curves:
\begin{enumerate}
    \item $\Gamma_0=(\gamma_1,\gamma_3
    )$ such that $X\setminus\{\gamma_1,\gamma_3\}\cong S_{0,3}\cup S_{g-1,1}$ where $\gamma_3$ is the  boundary curve of the part $S_{g-1,1}$ (see  picture (C) in Figure \ref{fig:figure-8-in-surface});
    \item $\Gamma_{g_1,g_2}=(\gamma_1,\gamma_2,\gamma_3)$ such that $X\setminus\{\gamma_1,\gamma_2,\gamma_3\}\cong S_{0,3}\cup S_{g_1,1}\cup S_{g_2,2}$ and $\gamma_3$ is the boundary curve of the part $S_{g_1,1}$, $\gamma_1,\gamma_2$ are the boundary curves of the part $S_{g_2,2}$. $(g_1,g_2)$ is a pair of positive integers such that $g_1+g_2=g-1$ (see picture (B) in Figure \ref{fig:figure-8-in-surface}); 
    \item $\Gamma_{g_1,g_2,g_3}=(\gamma_1,\gamma_2,\gamma_3)$ such that $X\setminus\{\gamma_1,\gamma_2,\gamma_3\}\cong S_{0,3}\cup S_{g_1,1}\cup S_{g_2,1}\cup S_{g_3,1}$ and $\gamma_i$ is the boundary curve of the part $S_{g_i,1}\ (i=1,2,3)$. $(g_1,g_2,g_3)$ is a pair of positive integers such that $g_1+g_2+g_3=g$ (see picture (D) in Figure \ref{fig:figure-8-in-surface}).
\end{enumerate}
 From \eqref{eq figure-8 length}, one may deduce that if a pair of pants $P(\gamma_1,\gamma_2,\gamma_3)$ contains a figure-eight closed geodesic of length $\leq L$, then  
\begin{align}\label{e-condition}
\sum\limits_{i=1}^3\ell(\gamma_i)\leq 2L\text{ and }\ell(\gamma_i)\leq L\text{ for }i=1,2,3.
\end{align}
Since for any pair of pants $P(\gamma_1,\gamma_2,\gamma_3)$, it contains exactly three figure-eight closed geodesics, together with \eqref{e-condition}, 
we have
\begin{equation*}
	N_{\feight}^{(g-1,1)}(X,L)\leq 3\cdot\sum\limits_{(\gamma_1,\gamma_3)\in\mathcal{O}_{\Gamma_0}}1_{[0,L]^2}(\ell(\gamma_1), \ell(\gamma_3));
\end{equation*} 
\begin{equation*}
	N_{\feight}^{(g_1,1)(g_2,2)}(X,L)\leq 3\cdot \sum\limits_{(\gamma_1,\gamma_2,\gamma_3)\in\mathcal{O}_{\Gamma_{g_1,g_2}}}1_{[0,2L]}\left(\ell(\gamma_1)+\ell(\gamma_2)+\ell(\gamma_3)\right);
\end{equation*}
\begin{equation*}
	N_{\feight}^{(g_1,g_2,g_3)}(X,L)\leq 3\cdot
    \sum\limits_{(\gamma_1,\gamma_2,\gamma_3)\in\mathcal{O}_{\Gamma_{g_1,g_2,g_3}}}1_{[0,2L]}\left(\ell(\gamma_1)+\ell(\gamma_2)+\ell(\gamma_3)\right).
\end{equation*}

Applying Mirzakhani's integration formula Theorem \ref{thm Mirz int formula} to function $$\phi_L(x_1,x_2)=1_{[0,L]^2}(x_1,x_2)$$ and simple closed multi-curve $\Gamma_0$, together with Theorem \ref{thm Vgn(x) small x}, we have
\begin{align}\label{e-q-+1}
	\E\left[N_{\feight}^{(g-1,1)}(X,L)\right] 
	\leq& 3 \int_{\substack{0\leq x_1\leq L \\ 0\leq x_2\leq L}} \frac{V_{g-1,1}(x_1)V_{0,3}(x_1,x_2,x_2)}{V_g} x_1x_2\ dx_1dx_2  \nonumber  \\
	\prec& \int_{\substack{0\leq x_1\leq L \\ 0\leq x_2\leq L}} x_2\sinh\tfrac{x_1}{2}\cdot\frac{V_{g-1,1}}{V_g} dx_1dx_2\\
	\prec& \frac{V_{g-1,1}}{V_g} L^2 e^{\frac{1}{2}L}.  \nonumber 
\end{align}

Applying Mirzakhani's integration formula Theorem \ref{thm Mirz int formula} to function $$\psi_L(x_1,x_2,x_3)=1_{[0,2L]}(x_1+x_2+x_3)$$ and simple closed multi-curves $\Gamma_{g_1,g_2}$ for all possible $(g_1,g_2)$'s, together with Theorem \ref{thm Vgn(x) small x}, we have
\begin{equation}\begin{aligned}
	&\sum_{(g_1,g_2)} \E\left[N_{\feight}^{(g_1,1)(g_2,2)}(X,L)\right
    ] \\
	\leq& 3 \sum_{(g_1,g_2)} \int_{\substack{x_1,x_2,x_3\geq0 \\ x_1+x_2+x_3\leq 2L}} \frac{V_{g_1,1}(x_1)V_{g_2,2}(x_2,x_3)V_{0,3}(x_1,x_2,x_3)}{V_g}\\
    \cdot&x_1x_2x_3\ dx_1dx_2dx_3 \\
	\prec& \sum_{(g_1,g_2)}  \int_{\substack{x_1,x_2,x_3\geq0 \\ x_1+x_2+x_3\leq 2L}} \sinh\tfrac{x_1}{2}\sinh\tfrac{x_2}{2}\sinh\tfrac{z_3}{2} \frac{V_{g_1,1}V_{g_2,2}}{V_g} dx_1dx_2dx_3  \\
	\prec& \sum_{(g_1,g_2)}  \frac{V_{g_1,1}V_{g_2,2}}{V_g} L^2 e^{L}.  
\end{aligned}\end{equation}

Applying Mirzakhani's integration formula Theorem \ref{thm Mirz int formula} to function $$\psi_L(x_1,x_2,x_3)=1_{[0,2L]}(x_1+x_2+x_3)$$ and simple closed multi-curve $\Gamma_{g_1,g_2,g_3}$ for all possible $(g_1,g_2,g_3)$'s, together with Theorem \ref{thm Vgn(x) small x} and similar computations as above, we have
\begin{equation}
	\sum_{(g_1,g_2,g_3)} \E\left[N_{\feight}^{(g_1,1)(g_2,1)(g_3,1)}(X,L)\right] \prec \sum_{(g_1,g_2,g_3)}  \frac{V_{g_1,1}V_{g_2,1}V_{g_3,1}}{V_g} L^2 e^{L}.
\end{equation}
Applying Theorem \ref{thm Vgn/Vgn+1} and Theorem \ref{thm sum-prod-V} for $q=2,3$ we have
\begin{equation}
	\frac{V_{g-1,1}}{V_g} \prec \frac{1}{g},
\end{equation}
\begin{equation}
	\sum_{(g_1,g_2)} \frac{V_{g_1,1}V_{g_2,2}}{V_g} \prec \frac{1}{g} \frac{W_{2g-3}}{V_g} \prec \frac{1}{g^2},
\end{equation}
\begin{equation}\label{e-q--1}
	\sum_{(g_1,g_2,g_3)}  \frac{V_{g_1,1}V_{g_2,1}V_{g_3,1}}{V_g} \prec \frac{1}{g^2} \frac{W_{2g-3}}{V_g} \prec \frac{1}{g^3}.
\end{equation}
Then the conclusion follows from all these equations \eqref{e-q-+1}-\eqref{e-q--1}.
\end{proof}

\begin{proof}[Proof of Proposition \ref{thm E[N_f8]=}]
    The conclusion clearly follows from \eqref{eq N_f8=}, Lemma \ref{thm E[figure-8] main term} and Lemma \ref{thm E[figure-8] small term}.
\end{proof}

\section{Upper bound}\label{s-upp}

In this section, we will prove the upper bound of the length of the non-simple systole for random hyperbolic surfaces. That is, we show   
\begin{theorem}\label{thm upper bound of ns-sys}
		For any function $\omega(g)$ satisfying \[\lim \limits_{g\to \infty}\omega(g)= +\infty \ \textit{and} \ \lim \limits_{g\to \infty}\frac{\omega(g)}{\log\log g} = 0,\]  we have 
	$$
	\lim \limits_{g\to \infty} \Prob\left(X\in \M_g ;\  \lns(X) < \log g -\log\log g +  \omega(g) \right)=1.
	$$ 
\end{theorem}

In order to prove Theorem \ref{thm upper bound of ns-sys}, it suffices to show that \begin{equation}\label{eq up f8}
     \lim_{g\to\infty}\Prob\left(X\in \M_g; \ N_{\feight}(X,L_g)=0 \right)=0
 \end{equation}
where $$L_g=\log g-\log\log g+\omega(g).$$ Instead of working on $N_{\feight}(X,L_g)$, we consider $\Nnumber(X,L_g)$ defined as follows.

 \begin{definition}
     For any $L>1$ and $X\in\mathcal{M}_g$, denote by 
$$\mathcal{N}_{(0,3),\star}^{(g-2,3)}(X,L)=\left\{(\gamma_1,\gamma_2,\gamma_3);\
\begin{matrix}(\gamma_1,\gamma_2,\gamma_3)\text{ is a pair of ordered simple closed}\\  \text{geodesic  such  that }X\setminus \bigcup_{i=1}^3\gamma_i\cong  S_{0,3} \cup S_{g-2,3},\\ \ell(\gamma_1)\leq L,\ \ell(\gamma_2)+\ell(\gamma_3)\leq L\\ \text{ and }\ell(\gamma_1),\ \ell(\gamma_2),\ \ell(\gamma_3)\geq 10\log L\end{matrix}\right\}$$
and 
$$N_{(0,3),\star}^{(g-2,3)}(X,L)=\#\mathcal{N}_{(0,3),\star}^{(g-2,3)}(X,L).$$
 \end{definition}
\begin{rem*}
    Since for any figure-eight closed geodesic, it is always contained in a pair of pants, it is natural to consider simple closed multi-geodesic $(\gamma_1,\gamma_2,\gamma_3)$ such that 
    \begin{align}\label{e-con03}
    X\setminus \bigcup_{i=1}^3 \gamma_i\cong S_{0,3}\cup S_{g-2,3}.
    \end{align}
\end{rem*}
For any ordered simple closed geodesic $(\gamma_1,\gamma_2,\gamma_3)$ satisfying \eqref{e-con03}, it bounds a pair of pants $P(\gamma_1,\gamma_2,\gamma_3)$. Consider the figure-eight closed geodesic $\alpha$ in $P(\gamma_1,\gamma_2,\gamma_3)$ which winds around $\gamma_2$ and $\gamma_3$. From Equation (\ref{eq figure-8 length}), we have 
$$\cosh\frac{\ell(\alpha)}{2}= \cosh\frac{\ell(\gamma_1)}{2} + 2\cosh\frac{\ell(\gamma_2)}{2}\cosh\frac{\ell(\gamma_3)}{2}.$$
If $\ell(\alpha)\leq L$, then we have
$$\ell(\gamma_1)\leq L\text{ and }\ell(\gamma_2)+\ell(\gamma_3)\leq L,$$
which is the second condition in the definition of $\Nset(X,L)$. 

For any $(\gamma_1,\gamma_2,\gamma_3)\in \Nset(X,L)$, we have
\begin{align*}
    e^{\frac{\ell(\alpha)}{2}}\leq 2e^{\frac{\ell(\gamma_1)}{2}}+4e^{\frac{\ell(\gamma_2)}{2}+\frac{\ell(\gamma_3)}{2}}\leq 6e^{\frac{L}{2}}
\end{align*}
and 
\begin{align}\label{length-f-8}
\ell(\alpha)\leq L+2\log 6.
\end{align}
It means that the pair of pants $P(\gamma_1,\gamma_2,\gamma_3)$ contains a figure-eight closed geodesic of length $\leq L+2\log 6$. It follows that
\begin{equation}
 \label{link f-8 with Nstar}\begin{aligned}
     &\Prob\left( X\in \M_g ;\ N_{\feight}(X,L_g)=0 \right)\\
     \leq &\Prob\left(X\in \M_g ;\ \Nnumber(X,L_g-2\log 6)=0 \right). \\
     \end{aligned}
 \end{equation}
 For any $L>1$, we view $\Nnumber(X,L)$ as a nonnegative integer-valued random variable on $\sM_g$. Then by the standard Cauchy-Schwarz inequality we have
\[\Prob\left(X\in \sM_g; \ \Nnumber(X,L)>0\right)\geq \frac{\E\left[\Nnumber(X,L)\right]^2}{\E\left[\big(\Nnumber(X,L)\big)^2\right]},\]
which implies
\begin{equation}\label{ineq Nset markov}
\begin{aligned}
&\ \ \ \  \Prob\left(X\in \sM_g; \ \Nnumber(X,L)=0   \right)\\
& \leq \frac{\E\left[\big(\Nnumber(X,L)\big)^2\right]-\E\left[\Nnumber(X,L)\right]^2}{\E\left[\Nnumber(X,L)\right]^2}.
\end{aligned} 
 \end{equation}
Such a second moment method has been used in \cite{MP19,NWX23}.
  \noindent For any $\Gamma=(\gamma_1,\gamma_2,\gamma_3)\in\mathcal{N}_{(0,3),\star}^{(g-2,3)}(X,L)$, denote by $P(\Gamma)$ the pair of pants with boundary geodesics $\gamma_1,\ \gamma_2$ and $\gamma_3$.  Set 
\begin{align*}
&\mathcal{A}(X,L)=\left\{(\Gamma_1,\Gamma_2)\in \left(\mathcal{N}_{(0,3),\star}^{(g-2,3)}(X,L)\right)^2;\ P(\Gamma_1)=P(\Gamma_2)\right\},\\
&\mathcal{B}(X,L)=\left\{(\Gamma_1,\Gamma_2)\in\left(\mathcal{N}_{(0,3),\star}^{(g-2,3)}(X,L)\right)^2; \ \overline{P(\Gamma_1)}\cap \overline{P(\Gamma_2)}=\emptyset \right\},\\
&\mathcal{C}(X,L)=\left\{(\Gamma_1,\Gamma_2)\in\left(\mathcal{N}_{(0,3),\star}^{(g-2,3)}(X,L)\right)^2;\ \begin{matrix} P(\Gamma_1)\neq P(\Gamma_2),\\ {P(\Gamma_1)}\cap {P(\Gamma_2)}\neq\emptyset\end{matrix}\right\},\\
&\mathcal{D}(X,L)=\left\{(\Gamma_1,\Gamma_2)\in\left(\mathcal{N}_{(0,3),\star}^{(g-2,3)}(X,L)\right)^2;\ \begin{matrix} \overline{P(\Gamma_1)}\cap \overline{P(\Gamma_2)}\neq\emptyset,\\
P(\Gamma_1)\cap P(\Gamma_2)=\emptyset,\end{matrix}\right\}.\\
\end{align*}
\begin{figure}[htbp]
  \centering
  \begin{subfigure}[b]{0.45\linewidth}
      \centering
      \includegraphics[width=\linewidth]{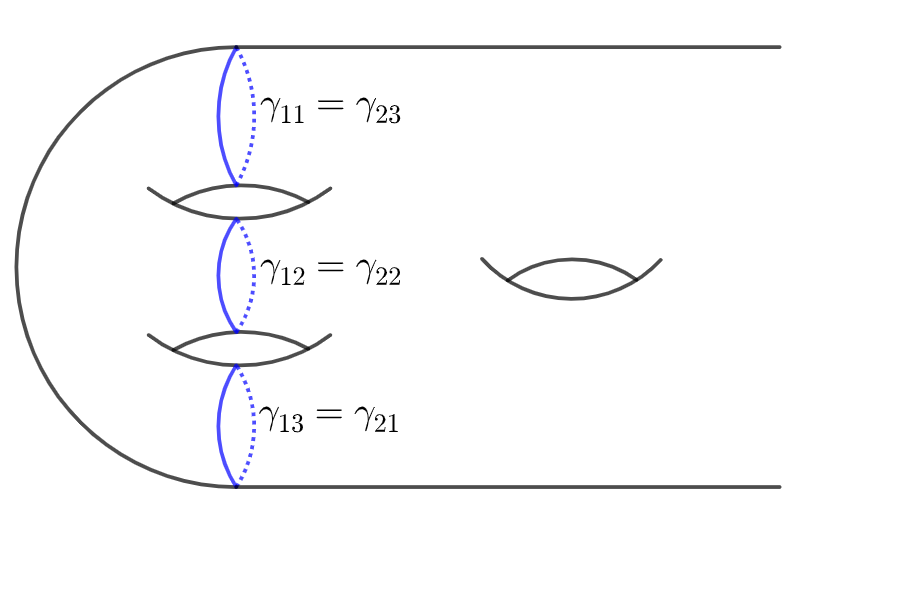}
      \caption{$\mathcal{A}(X,L)$.}
      \label{}
  \end{subfigure}
  \hfill
  \begin{subfigure}[b]{0.45\linewidth}
      \centering
      \includegraphics[width=\linewidth]{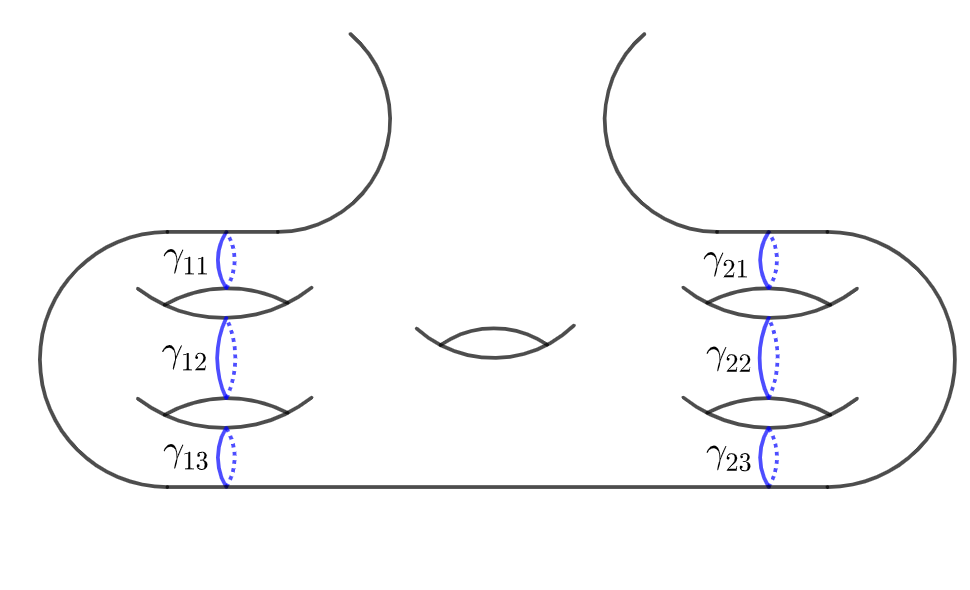}
      \caption{$\mathcal{B}(X,L)$.}
      \label{}
  \end{subfigure}
  \\
  \begin{subfigure}[b]{0.45\linewidth}
      \centering
      \includegraphics[width=\linewidth]{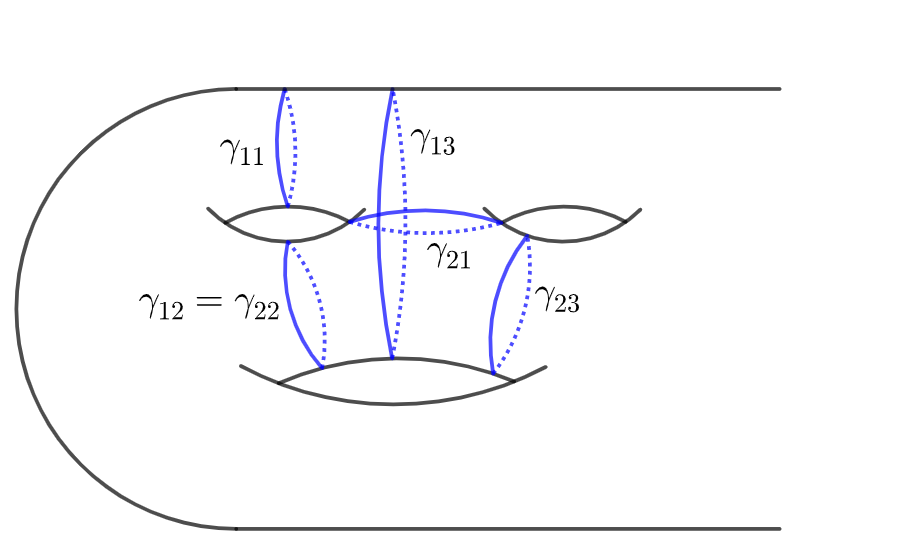}
      \caption{$\mathcal{C}(X,L)$.}
      \label{}
  \end{subfigure}
  \hfill
  \begin{subfigure}[b]{0.45\linewidth}
      \centering
      \includegraphics[width=\linewidth]{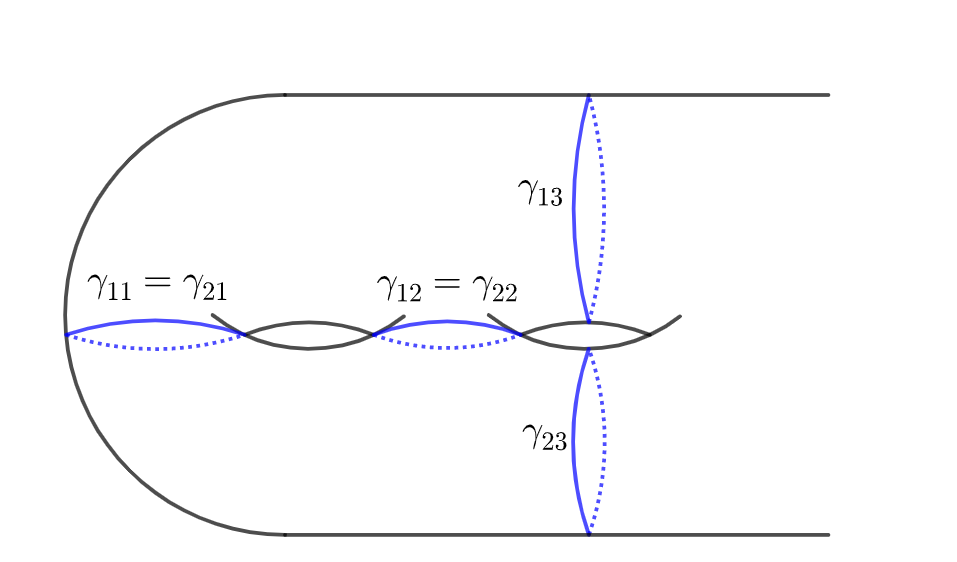}
      \caption{$\mathcal{D}(X,L)$.}
      \label{}
  \end{subfigure}
  \caption{Four types of $(\Gamma_1,\Gamma_2)$ on $X$}
  \label{fig:classifyABCD}
\end{figure}
\noindent Assume $\Gamma_1=(\gamma_{11},\gamma_{12},\gamma_{13})$ and $\Gamma_2=(\gamma_{21},\gamma_{22},\gamma_{23})$, as shown in Figure \ref{fig:classifyABCD}:
\begin{enumerate}[1.]
\item In Picture $(A)$, $\gamma_{1,i}=\gamma_{2,4-i}\ (i=1,2,3)$. Then we have $\Gamma_1\neq\Gamma_2$ and $P(\Gamma_1)=P(\Gamma_2)$. Hence $(\Gamma_1,\Gamma_2)\in\mathcal{A}(X,L)$;
\item In Picture $(B)$, $\overline{P(\Gamma_1)}\cap\overline{P(\Gamma_2)}=\emptyset$. Hence $(\Gamma_1,\Gamma_2)\in\mathcal{B}(X,L)$ and $$X\setminus(\Gamma_1\cup\Gamma_2)\cong S_{0,3}\cup S_{0,3}\cup S_{g-4,n+6};$$
\item In Picture $(C)$, ${P(\Gamma_1)}\cap{P(\Gamma_2)}\neq\emptyset$. Hence $(\Gamma_1,\Gamma_2)\in\mathcal{C}(X,L)$;
\item In Picture $(D)$,
$P(\Gamma_1)\cap P(\Gamma_2)=\emptyset$ and $\gamma_{1i}=\gamma_{2i} \ (i=1,2)$.  Hence $(\Gamma_1,\Gamma_2)\in \MD(X,L).$
\end{enumerate}
Denote by
\begin{align*}
    &A(X,L)=\#\mathcal{A}(X,L),\ B(X,L)=\#\mathcal{B}(X,L),\\
    &C(X,L)=\#\mathcal{C}(X,L),\ D(X,L)=\#\mathcal{D}(X,L).
\end{align*}
It is clear that 
\begin{equation}\label{separate N into ABCD}
\begin{aligned}
\E\left[\big(\Nnumber(X,L)\big)^2\right]&=\E\left[A(X,L)\right]+\E\left[B(X,L)\right]\\&+\E\left[C(X,L)\right]+\E\left[D(X,L)\right].
\end{aligned}
\end{equation}
\begin{rem*}
One important part in the proof of the upper bound of $\lns(X)$ for a random hyperbolic surface $X$ is to demonstrate that
 $$\E\left[C(X,L)\right]=o\left(E\left[\Nnumber(X,L)\right]^2\right).$$
 The condition $\ell(\gamma_i)\geq 10\log L\ (i=1,2,3)$ for elements in $\Nnumber(X,L)$ is a technical condition to make the computation easier. It still works without this condition, but with more complicated procedures.
\end{rem*}
Since any $\Gamma\in\mathcal{N}^{(g-2,3)}_{(0,3),\star}(X,L)$ is an ordered simple closed multi-curve, it follows that for any pair of pants $P$ in $X$, there exist at most $3!=6$ different $\Gamma'$s such that $P=P(\Gamma)$ and 
\begin{align}\label{compare AN}
\E\left[A(X,L)\right]\leq 6\cdot \E\left[\Nnumber(X,L)\right].
\end{align}

Now we split the proof of Theorem \ref{thm upper bound of ns-sys} into the following several subsections. The estimate for $\E\left[C(X,L)\right]$ is the hard part. And the estimates for $\E\left[A(X,L)\right]$,\\ $\E\left[B(X,L)\right]$ and 
$\E\left[D(X,L)\right]$ are relatively easier. 

\subsection{Estimates of $\E\left[\Nnumber(X,L)\right]$}

Recall that by Proposition \ref{thm E[N_f8]=} we have 
\be
\E\left[N_{\feight}(X,L_g) \right]\sim \frac{L_ge^{L_g}}{8\pi^2g}
\ene 
where $L_g=\log g-\log\log g+\omega(g)$ and $\omega(g)=o(\log\log g)$. We will see that\\ $\E\left[ \Nnumber(X,L_g) \right]$ is of the same growth rate.

\begin{proposition}\label{p-e-1}
Assume $L>1$ and $L=O\left(\log g\right)$, then
$$\E\left[N_{(0,3),\star}^{(g-2,3)}(X,L)\right]=\frac{1}{2\pi^2 g}Le^L\left(1+O\left(\frac{\log L}{L}\right)\right)$$  
where the implied constant is independent of $L$ and $g$.
\end{proposition}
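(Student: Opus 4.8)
The plan is to compute the expectation directly by Mirzakhani's integration formula (Theorem~\ref{thm Mirz int formula}). Write $\Gamma=(\gamma_1,\gamma_2,\gamma_3)$ for the ordered triple of simple closed curves whose complement is $S_{0,3}\sqcup S_{g-2,3}$; then $\Nnumber(X,L)=F^\Gamma(X)$ for the indicator $F=\mathbf 1_R$ of
\[
R=\{(x,y,z):\ 10\log L\le x\le L,\ y,z\ge 10\log L,\ y+z\le L\}.
\]
Cutting along $\Gamma$ yields $S_{0,3}\sqcup S_{g-2,3}$ with $V_{0,3}\equiv1$, and the ordered configuration is exactly the one for which $C_\Gamma=1$ was recorded in the proof of Lemma~\ref{thm E[figure-8] main term}. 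Hence
\[
\E\left[\Nnumber(X,L)\right]=\frac{1}{V_g}\int_R V_{g-2,3}(x,y,z)\,xyz\,dx\,dy\,dz.
\]

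First I would linearize the integrand. Combining $\frac{V_{g-2,3}}{V_g}=\frac{1}{8\pi^2 g}(1+O(\frac1g))$ from \eqref{e-vol-1} with Theorem~\ref{thm Vgn(x) small x} gives, exactly as in \eqref{eq V_g-2,3(x,y,z)},
\[
\frac{V_{g-2,3}(x,y,z)}{V_g}\,xyz=\frac{1}{\pi^2 g}\sinh(\tfrac x2)\sinh(\tfrac y2)\sinh(\tfrac z2)\Big(1+O\big(\tfrac{1+x^2+y^2+z^2}{g}\big)\Big).
\]
On $R$ we have $x,y,z\le L$, so the multiplicative error is $O(L^2/g)$; since the leading integral will have size $\asymp Le^L$, this produces a relative error $O(L^2/g)=O((\log g)^2/g)$ under $L=O(\log g)$, which is negligible against the claimed $O(\log L/L)$. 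It then remains to evaluate $\frac{1}{\pi^2 g}\int_R \sinh(\tfrac x2)\sinh(\tfrac y2)\sinh(\tfrac z2)\,dx\,dy\,dz$.

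Since $R$ factors as $\{10\log L\le x\le L\}\times\{y,z\ge 10\log L,\ y+z\le L\}$, the integral splits as a product. The $x$-factor is $\int_{10\log L}^{L}\sinh(\tfrac x2)\,dx=e^{L/2}\big(1+O(L^5e^{-L/2})\big)$. For the $(y,z)$-factor I would write $\sinh(\tfrac y2)\sinh(\tfrac z2)=\tfrac14 e^{(y+z)/2}-\tfrac12\cosh(\tfrac{y-z}2)+\tfrac14 e^{-(y+z)/2}$ and integrate the leading term over the triangle using the slice variable $u=y+z$, whose fiber has length $u-20\log L$:
\[
\tfrac14\int_{20\log L}^{L} e^{u/2}\,(u-20\log L)\,du=\tfrac{L}{2}e^{L/2}\Big(1+O\big(\tfrac{\log L}{L}\big)\Big),
\]
the two correction terms being $O(e^{L/2}L^{-10})$ and hence absorbed. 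Multiplying the two factors gives $\tfrac{L}{2}e^{L}(1+O(\tfrac{\log L}{L}))$, so $\E[\Nnumber(X,L)]=\frac{1}{2\pi^2 g}Le^L(1+O(\tfrac{\log L}{L}))$.

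The one point requiring care — and the main obstacle — is the bookkeeping of the lower cutoffs $\ell_{\gamma_i}\ge 10\log L$: I expect the cutoff in the constraint $y+z\le L$ to produce exactly the relative loss $\tfrac{20\log L}{L}$ that is the source of the stated $O(\log L/L)$ error, and the plan is to verify that all remaining cutoff effects (in the $x$-factor and in the cross terms of the $\sinh$ product), together with the volume error $O(L^2/g)$, are of strictly smaller order. Everything else reduces to routine elementary integration.
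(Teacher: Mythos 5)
Your proposal is correct and follows essentially the same route as the paper's proof: Mirzakhani's integration formula with $C_\Gamma=1$ over the region $D_L$, the volume approximations \eqref{e-vol-1} and Theorem~\ref{thm Vgn(x) small x} contributing a negligible $O(L^2/g)$ relative error, and the factorization of the integral into the $x$-factor and the $(y,z)$-factor with the $O(\log L/L)$ loss coming from the $10\log L$ cutoffs. The only difference is cosmetic: you evaluate the $(y,z)$-integral via the product-to-sum identity and the slice $u=y+z$ (fiber length $u-20\log L$), whereas the paper performs the iterated integral directly through $\cosh\frac{L-z}{2}$; both yield $\tfrac{L}{2}e^{L/2}\left(1+O\left(\tfrac{\log L}{L}\right)\right)$.
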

\begin{proof}
For any $L>1$, let $D_L\subset\mathbb{R}_{\geq 0}^3$ be a domain defined by
$$D_L:=\left\{(x_1,x_2,x_3)\in\mathbb{R}^3_{\geq 0};\ x_1\leq L,\ x_2+x_3\leq L,\ x_1,\ x_2,\ x_3\geq 10\log L\right\}.$$
Assume $\phi_{L}:\mathbb{R}^3_+\to\mathbb{R}_{\geq 0}$ is the characteristic function of $D_L$, i.e.
$$\phi_L(u)=\begin{cases}0 & u\notin D_L,\\ 1 & u\in D_L.\end{cases}$$
Assume $\Gamma=(\gamma_1,\gamma_2,\gamma_3)$ is a pair of \emph{ordered} simple closed curves in $X$ such that
$$X\setminus\bigcup_{i=1}^3\gamma_i\cong S_{0,3}\cup S_{g-2,3}.$$
Applying Mirzakhani's integration formula Theorem \ref{thm Mirz int formula} to the ordered simple closed multi-curve $\Gamma$ and function $\phi_L$, together with \eqref{eq V_g-2,3(x,y,z)}, we have
\begin{align}\label{e-exp}
&\ \ \ \  \E\left[N_{(0,3),\star}^{(g-2,3)}(X,L)\right]\nonumber\\
&=\frac{1}{V_g}\int_{D_L}V_{0,3}(x_1,x_2,x_3)V_{g-2,3}(x_1,x_2,x_3)x_1x_2x_3\ dx_1dx_2dx_3 \\
&=\int_{D_L}\prod\limits_{i=1}^3\sinh\frac{x_i}{2}dx_1dx_2dx_3\cdot\left(1+O\left(\frac{L^2}{g}\right)\right).\nonumber
\end{align}
Direct computation implies that
\begin{align}\label{e-int-1}
\int_{10\log L}^L\sinh\frac{x_1}{2}dx_1=2\left(\cosh\frac{L}{2}-\cosh (5\log L)\right)=e^{\frac{L}{2}}+O\left(L^5\right)
\end{align}
and
\begin{align}\label{e-int-2}
& \int_{\mbox{\tiny$\begin{array}{c}x_2+x_3\leq L\\x_2,x_3\geq 10\log L\end{array}$}}\sinh \frac{x_2}{2}\sinh \frac{x_3}{2}dx_2dx_3 \nonumber\\
=&\int_{10\log L}^{L-10\log L}\sinh\frac{x_3}{2}\int_{10\log L}^{L-x_3}\sinh\frac{x_2}{2}dx_2dx_3\nonumber\\
=&2\int_{10\log L}^{L-10\log L}\sinh\frac{x_3}{2}\left(\cosh\frac{L-x_3}{2}-\cosh (5\log L)\right)dx_3\\
=&2\int_{10\log L}^{L-10\log L}\sinh\frac{x_3}{2}\cosh\frac{L-x_3}{2}dx_3+O\left( e^{\frac{L}{2}}\right)\nonumber\\
=&\frac{1}{2}Le^{\frac{L}{2}}+O\left(e^{\frac{L}{2}}\log L\right)\nonumber,
\end{align}
where the implied constants are independent of $L$. From \eqref{e-int-1} and \eqref{e-int-2} we have
\begin{equation}\label{e-int-3}
\begin{aligned}
& \int_{D_L}\sinh\frac{x_1}{2}\sinh\frac{x_2}{2}\sinh\frac{x_3}{2}dx_1dx_2dx_3\\
=&\int_{10\log L}^L\sinh\frac{x_1}{2}dx_1\times\int_{\mbox{\tiny$\begin{array}{c}x_2+x_3\leq L\\x_2,x_3\geq 10\log L\end{array}$}}\sinh \frac{x_2}{2}\sinh \frac{x_3}{2}dx_2dx_3\\
=&\left(e^{\frac{L}{2}}+O\left(L^5\right)\right)\times\left(\frac{1}{2}Le^{\frac{L}{2}}+O\left(e^{\frac{L}{2}}\log L\right)\right)\\
=&\frac{1}{2}Le^L+O\left(e^L\log L\right).
\end{aligned}
\end{equation}
Combining \eqref{e-exp}, \eqref{e-int-3} and the assumption that $L=O\left(\log g\right)$, we obtain
\begin{align*}
&\ \ \ \ \E\left[N_{(0,3),\star}^{(g-2,3)}(X,L)\right]\\
&=\frac{1}{\pi^2 g}\cdot\left(\frac{1}{2}Le^L+O\left(e^L\log L\right)\right)\cdot\left(1+O\left(\frac{L^2}{g}\right)\right)\\
&=\frac{1}{2\pi^2 g}Le^L\left(1+O\left(\frac{\log L}{L}\right)\right)
\end{align*}as desired.
\end{proof}

\subsection{Estimates of $\E\left[B(X,L)\right]$}

For $B(X,L)$, we will show that 
\[\E\left[B(X,L_g)\right]\sim \E\left[\Nnumber(X,L_g)\right]^2, \quad \textit{as $g\to \infty$},\]
where $L_g=\log g-\log \log g+\omega(g)$ and $\omega(g)=o(\log\log g)$. More precisely,

\begin{proposition}\label{estimation B(X,L)}
Assume $L>1$ and $L=O(\log g)$, then
\begin{align}
\mathbb{E}_{\textnormal{WP}}^g\left[B(X,L)\right]=\frac{1}{4\pi^4 g^2}L^2e^{2L}\left(1+O\left(\frac{\log L}{L}\right)\right),
\end{align}
where the implied constant is independent of $g$ and $L$.
\end{proposition}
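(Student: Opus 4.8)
The plan is to run the computation of Proposition \ref{p-e-1} twice in parallel, using that the hypothesis $\overline{P(\Gamma_1)}\cap\overline{P(\Gamma_2)}=\emptyset$ forces the two countings to decouple both topologically and at the level of Weil--Petersson volumes. First I would apply Mirzakhani's integration formula (Theorem \ref{thm Mirz int formula}) to the ordered $6$-tuple $\Gamma=(\gamma_{11},\gamma_{12},\gamma_{13},\gamma_{21},\gamma_{22},\gamma_{23})$. For every $(\Gamma_1,\Gamma_2)\in\mathcal{B}(X,L)$ the complement satisfies $X\setminus(\Gamma_1\cup\Gamma_2)\simeq S_{0,3}\cup S_{0,3}\cup S_{g-4,6}$, so the associated volume factors as $V_{0,3}(\vec x_1)V_{0,3}(\vec x_2)V_{g-4,6}(\vec x_1,\vec x_2)$ with $\vec x_i=(x_i,y_i,z_i)$. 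As in Proposition \ref{p-e-1}, each of the six curves separates one of the $S_{0,3}$'s from the $S_{g-4,6}$ and the tuple is ordered, so there is no symmetry or half-twist reduction and the Mirzakhani constant equals $C_\Gamma=1$. Using $V_{0,3}\equiv 1$ this gives
\[
\E[B(X,L)]=\frac{1}{V_g}\int_{D_L\times D_L}V_{g-4,6}(\vec x_1,\vec x_2)\,x_1y_1z_1\,x_2y_2z_2\,d\vec x_1\,d\vec x_2,
\]
where $D_L$ is the domain of Proposition \ref{p-e-1}.

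Next I would replace $V_{g-4,6}(\vec x_1,\vec x_2)$ by its asymptotics. The key input is the volume ratio: chaining Part (2) of Theorem \ref{thm Vgn/Vgn+1} exactly as in the derivation of \eqref{e-vol-1} yields
\[
\frac{V_{g-4,6}}{V_g}=\frac{1}{64\pi^4 g^2}\left(1+O\left(\tfrac{1}{g}\right)\right)=\left(\frac{V_{g-2,3}}{V_g}\right)^2\left(1+O\left(\tfrac{1}{g}\right)\right),
\]
which is precisely the identity responsible for the factorization $\E[B]\sim\E[\Nnumber]^2$. Combining this with Theorem \ref{thm Vgn(x) small x} applied with $n=6$ --- whose error is $O\big(\sum_j x_j^2/g\big)=O(L^2/g)$ on $D_L\times D_L$ because each coordinate is $\le L$ --- and using $\tfrac{\sinh(x/2)}{x/2}\cdot x=2\sinh(x/2)$ for all six variables, the integrand becomes $\frac{1}{\pi^4 g^2}\prod_{j=1}^6\sinh(x_j/2)$ up to a factor $1+O(L^2/g)$, since $2^6/(64\pi^4)=1/\pi^4$.

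Finally the integral factors as the square of the one already evaluated in Proposition \ref{p-e-1}. By \eqref{e-int-3},
\[
\int_{D_L}\sinh\tfrac{x}{2}\sinh\tfrac{y}{2}\sinh\tfrac{z}{2}\,dx\,dy\,dz=\tfrac{1}{2}Le^{L}\left(1+O\left(\tfrac{\log L}{L}\right)\right),
\]
so that
\[
\E[B(X,L)]=\frac{1}{\pi^4 g^2}\left(\tfrac{1}{2}Le^{L}\right)^2\left(1+O\left(\tfrac{\log L}{L}\right)\right)\left(1+O\left(\tfrac{L^2}{g}\right)\right)=\frac{1}{4\pi^4 g^2}L^2e^{2L}\left(1+O\left(\tfrac{\log L}{L}\right)\right),
\]
where the $O(L^2/g)$ error is absorbed because $L=O(\log g)$ makes $L^2/g=o(\log L/L)$. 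No step is genuinely hard; the only points requiring care are the topological identification of the complement as $S_{0,3}\cup S_{0,3}\cup S_{g-4,6}$ (which gives both the volume factorization and $C_\Gamma=1$) and the bookkeeping of the ratio $V_{g-4,6}/V_g$, which supplies the $g^{-2}$ and the matching leading constant $1/(4\pi^4)$.
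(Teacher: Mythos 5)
Your proposal is correct and follows essentially the same route as the paper: apply Mirzakhani's integration formula to the ordered $6$-tuple using the identification $X\setminus(\Gamma_1\cup\Gamma_2)\simeq S_{0,3}\cup S_{0,3}\cup S_{g-4,6}$, substitute the ratio $\frac{V_{g-4,6}}{V_g}=\frac{1}{64\pi^4 g^2}\left(1+O\left(\frac{1}{g}\right)\right)$ together with the sinh-approximation of Theorem \ref{thm Vgn(x) small x}, and factor the resulting integral over $D_L\times D_L$ as the square of the integral \eqref{e-int-3} already computed in Proposition \ref{p-e-1}. The constants, error terms, and the absorption of $O(L^2/g)$ via $L=O(\log g)$ all match the paper's computation.
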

\begin{proof}
Assume $(\Gamma_1,\Gamma_2)\in\mathcal{B}(X,L)$ and $\Gamma_1=(\alpha_1,\alpha_2,\alpha_3)$, $\Gamma_2=(\beta_1,\beta_2,\beta_3)$. From the definition of $\mathcal{B}(X,L)$, it is not hard to check that 
$$X\setminus\left(\Gamma_1\cup\Gamma_2\right)=P(\Gamma_1)\cup P(\Gamma_2)\cup Y,\text{ where } Y\cong S_{g-4,6}.$$
Define a function $\phi_{L,2}:\mathbb{R}^3_+\times\mathbb{R}^3_+\to\mathbb{R}_{\geq 0}$ as follows:
$$\phi_{L,2}(u,v):=\phi_L(u)\cdot \phi_L(v)$$
where $\phi_L$ is defined in the proof of Proposition \ref{p-e-1}. Assume $(\gamma_1,\gamma_2, \cdots, \gamma_6)$ is an ordered simple closed multi-curve in $S_g$ such that $$X\setminus\bigcup\limits_{i=1}^6\gamma_i\cong S_{0,3}^1\cup S_{0,3}^2\cup S_{g-4,6}$$
where the boundary of $S_{0,3}^1$ consists of $\gamma_i\ (1\leq i\leq 3)$ and the boundary of $S_{0,3}^2$ consists of $\gamma_j\ (4\leq j\leq 6)$. By Part $(2)$ of Theorem \ref{thm Vgn/Vgn+1} we have \begin{equation}\label{e-vol-2}
    \frac{V_{g-4,6}}{V_g}=\frac{1}{64\pi^4g^2}\left(1+O\left(\frac{1}{g}\right)\right).
\end{equation}
By Theorem \ref{thm Vgn(x) small x} we have
\begin{align}\label{e-vol-q}
V_{g-4,6}(x_1,...,x_6)=V_{g-4,6}\cdot\prod\limits_{i=1}^6\frac{2\sinh\frac{x_i}{2}}{x_i}\cdot\left(1+O\left(\frac{L^2}{g}\right)\right)
\end{align}
for $(x_1,\cdots,x_6)\in \textbf{supp}(\phi_{L,2})$,
where the implied constant is independent of $L$ and $g$. Applying Mirzakhani's integration formula Theorem \ref{thm Mirz int formula}  to ordered simple closed multi-curve $(\gamma_1,\gamma_2, \cdots, \gamma_6)$ and function $\phi_{L,2}$, together with \eqref{e-int-3}, \eqref{e-vol-2} and \eqref{e-vol-q}, we have 
\begin{align}
& \mathbb{E}_{\textnormal{WP}}^g\left[B(X,L)\right]\nonumber\\
=&\frac{1}{V_g}\int_{D_L\times D_L}V_{g-4,6}(x_1,...,x_6)\prod\limits_{i=1}^6x_idx_1...dx_6\nonumber\\
=&\frac{V_{g-4,6}}{V_g}\left(\int_{D_L}8\prod_{i=1}^3\sinh\frac{x_i}{2}dx_1dx_2dx_3\right)^2\cdot\left(1+O\left(\frac{L^2}{g}\right)\right)\\
=&\frac{1}{64\pi^4 g^2}\left(1+O\left(\frac{1}{g}\right)\right)\cdot\left(4Le^L+O\left(e^L\log L\right)\right)^2\cdot\left(1+O\left(\frac{L^2}{g}\right)\right)\nonumber\\
=&\frac{1}{4\pi^4 g^2}L^2e^{2L}\left(1+O\left(\frac{\log L}{L}\right)\right)\nonumber
\end{align}
where the implied constant is independent of $L$ and $g$.
\end{proof}

\subsection{Estimates of $\E\left[C(X,L)\right]$}

For $(\Gamma_1,\Gamma_2)\in \mathcal{C}(X,L)$, $\Gamma_1\cup\Gamma_2$ is not the union of disjoint simple closed geodesics but with intersections. This is the hard case. We will need the following construction as in \cite{MP19, WX22-GAFA, NWX23} to deform $\Gamma_1\cup\Gamma_2$.

\begin{con*} Fix a closed hyperbolic surface $X\in\mathcal{M}_g$ and let $X_1,X_2$ be two distinct connected, precompact subsurfaces of $X$ with geodesic boundaries, such that $X_1\cap X_2\neq\emptyset$ and neither of them contains the other. Then the union $X_1\cup X_2$ is a subsurface whose boundary consists of only piecewise geodesics. We can construct from it a new subsurface, with geodesic boundary, by deforming each of its boundary components $\xi\subset\partial\left(X_1\cup X_2\right)$ as follows:
\begin{enumerate}[1.]
\item if $\xi$ is homotopically nontrivial, we deform $X_1\cup X_2$ by shrinking $\xi$ to the unique simple closed geodesic homotopic to it;
\item if $\xi$ is homotopically trivial, we fill into $X_1\cup X_2$ the disc bounded by $\xi$.
\end{enumerate}
Denote by $S(X_1,X_2)$ the subsurface with geodesic boundary constructed from $X_1$ and $X_2$ as above. For any $(\Gamma_1,\Gamma_2)\in\mathcal{C}(X,L)$, denote by $S(\Gamma_1,\Gamma_2)=S\left(P(\Gamma_1),P(\Gamma_2)\right)$ the subsurface of $X$ constructed from $P(\Gamma_1)$ and $P(\Gamma_2)$. 
\end{con*}

By the  construction, it is clear that 
\begin{equation}\label{ell partial S(X_1,X_2)}
\ell(\partial S(X_1,X_2))\leq \ell(\partial X_1)+\ell(\partial X_2),\end{equation}
and by isoperimetric inequality (see e.g. \cite[Section 8.1]{Buser10} or \cite{WX18}) we have\begin{equation}\label{area S(X_1,X_2)}
\area(S(X_1,X_2))\leq \area(X_1)+\area(X_2)+\ell(\partial X_1)+\ell(\partial X_2).
\end{equation}

\begin{figure}[htbp]
  \centering
  \begin{subfigure}[b]{0.45\linewidth}
      \centering
      \includegraphics[width=\linewidth]{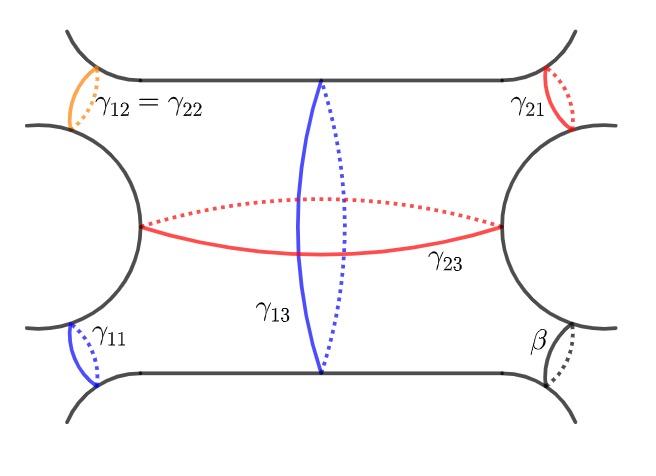}
      \caption{$S(\Gamma_1,\Gamma_2)\cong S_{0,4}$.}
      \label{}
  \end{subfigure}
  \hfill
  \begin{subfigure}[b]{0.45\linewidth}
      \centering
      \includegraphics[width=\linewidth]{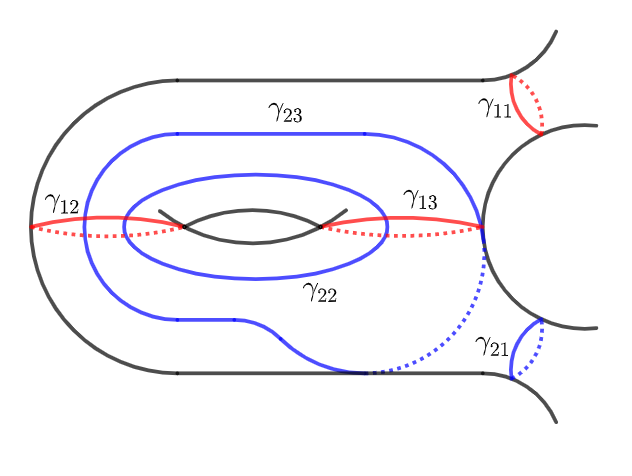}
      \caption{$S(\Gamma_1,\Gamma_2)\cong S_{1,2}$.}
      \label{}
  \end{subfigure}
  \\
  \begin{subfigure}[b]{0.45\linewidth}
      \centering
      \includegraphics[width=\linewidth]{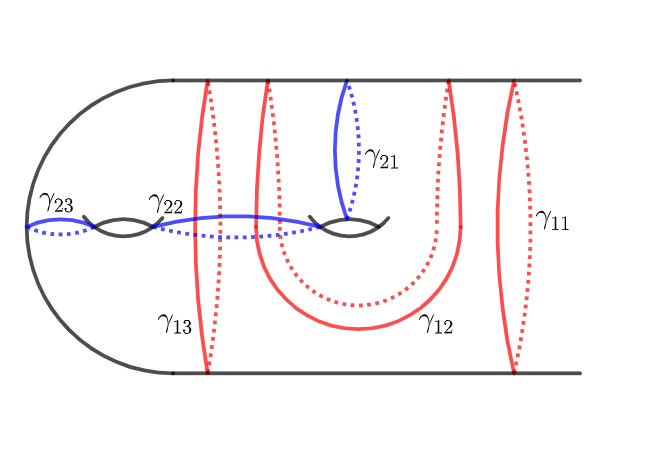}
      \caption{$|\chi(S(\Gamma_1,\Gamma_2))|\geq 3$.}
      \label{}
  \end{subfigure}
  \caption{Three types of $S(\Gamma_1,\Gamma_2)$}
  \label{fig:classifyC}
\end{figure}
Recall that by the definition of $\mathcal{N}_{(0,3),\star}^{(g-2,3)}(X,L)$, we know that for any $\Gamma \in \mathcal{N}_{(0,3),\star}^{(g-2,3)}(X,L)$, $$X\setminus \Gamma \cong  S_{0,3} \cup S_{g-2,3}.$$
Thus we may divide $\mathcal{C}(X,L)$ into following pairwisely disjoint three parts:
\be\label{e-C-3-p}
\mathcal{C}(X,L)=\mathcal{C}_{0,4}(X,L)\cup\mathcal{C}_{1,2}(X,L)\cup\mathcal{C}_{\geq 3}(X,L)
\ene
where 
\begin{align*}
&\mathcal{C}_{0,4}(X,L)=\left\{(\Gamma_1,\Gamma_2)\in\mathcal{C}(X,L);\ S(\Gamma_1,\Gamma_2)\cong S_{0,4}\right\},\\
&\mathcal{C}_{1,2}(X,L)=\left\{(\Gamma_1,\Gamma_2)\in\mathcal{C}(X,L);\ S(\Gamma_1,\Gamma_2)\cong S_{1,2}\right\},\\
&\mathcal{C}_{\geq 3}(X,L)=\{(\Gamma_1,\Gamma_2)\in\mathcal{C}(X,L);\ |\chi(S(\Gamma_1,\Gamma_2))|\geq 3\}.
\end{align*}
Denote by \begin{align*}
    C_{0,4}(X,L)=\#\mathcal{C}_{0,4}(X,L),\ C_{1,2}(X,L)=\#\mathcal{C}_{1,2}(X,L),\  C_{\geq 3}(X,L)=\#\mathcal{C}_{\geq 3}(X,L).
\end{align*}

Assume $\Gamma_1=(\gamma_{11},\gamma_{12},\gamma_{13})$ and $\Gamma_2=(\gamma_{21},\gamma_{22},\gamma_{23})$. As in Figure \ref{fig:classifyC}:
\begin{enumerate}[1.]
\item In Picture $(A)$, the simple closed geodesic $\gamma_{12}$ coincides with $\gamma_{22}$. We have $S(\Gamma_1,\Gamma_2)\cong S_{0,4}$ of geodesic boundaries $\gamma_{11},\ \gamma_{12}=\gamma_{22},\ \gamma_{21}$ and $\beta$. Hence $(\Gamma_1,\Gamma_2)\in\mathcal{C}_{0,4}(X,L)$;
\item In Picture $(B)$, we have $S(\Gamma_1,\Gamma_2)\cong S_{1,2}$ with boundary geodesics $\gamma_{11}$ and $\gamma_{21}$. Hence $(\Gamma_1,\Gamma_2)\in\mathcal{C}_{1,2}(X,L)$;
\item In picture $(C)$, we have $S(\Gamma_1,\Gamma_2)\cong S_{0,5}$ with boundary geodesics $\gamma_{11},\gamma_{21},\gamma_{23}$ where $\gamma_{21}$ and $\gamma_{23}$ appear twice in the boundary of $S(\Gamma_1,\Gamma_2)$. Hence $(\Gamma_1,\Gamma_2)\in\mathcal{C}_{\geq 3}(X,L)$.
\end{enumerate}

For $L>0$ and $X\in\mathcal{M}_g$, define 
$$\textnormal{Sub}_{L}(X)\overset{\textnormal{def}}{=}\left\{\begin{matrix}Y\subset X\text{ is a connected subsurface of geodesic boundary}\\ \text{such that }\ell(\partial Y)\leq 2L\text{ and }\textnormal{Area}(Y)\leq 4L+4\pi\end{matrix}\right\}.$$

\begin{lemma}\label{l-cor}
For any $(\Gamma_1,\Gamma_2)\in\mathcal{C}(X,L)$, there exists a triple $(\alpha_1,\alpha_2,Y)$ such that
\begin{enumerate}
\item $Y=S(\Gamma_1,\Gamma_2)\in\textnormal{Sub}_L(X)$;
\item $\alpha_i$ is a  figure-eight closed geodesic contained in $P(\Gamma_i)$ for $i=1,2$;
\item $(\alpha_1,\alpha_2)$ is a filling 2-tuple in $Y$ and $\ell(\alpha_1),\ell(\alpha_2)\leq L+2\log 6$.
\end{enumerate}
\end{lemma}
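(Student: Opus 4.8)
The plan is to let $\alpha_i$ be the figure-eight closed geodesic inside $P(\Gamma_i)$ that winds around the two boundary curves $\gamma_{i2}$ and $\gamma_{i3}$, and then to check the three assertions in turn; the filling statement in (3) is the real content and I will treat it last. Writing $x_i=\ell_{\gamma_{i1}}(X)$, $y_i=\ell_{\gamma_{i2}}(X)$, $z_i=\ell_{\gamma_{i3}}(X)$, formula \eqref{eq figure-8 length} gives $\cosh(\ell(\alpha_i)/2)=\cosh(x_i/2)+2\cosh(y_i/2)\cosh(z_i/2)$. Since $(\gamma_{i1},\gamma_{i2},\gamma_{i3})\in\Nset(X,L)$ we have $x_i\le L$ and $y_i+z_i\le L$, so $\cosh(x_i/2)\le\cosh(L/2)$ and $2\cosh(y_i/2)\cosh(z_i/2)=\cosh(\tfrac{y_i+z_i}{2})+\cosh(\tfrac{y_i-z_i}{2})\le 2\cosh(L/2)$. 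Hence $\cosh(\ell(\alpha_i)/2)\le 3\cosh(L/2)$ and $\ell(\alpha_i)\le L+c$ for a universal constant $c$; this establishes (2) and the length bound in (3), and is exactly the observation recorded just after the definition of $\Nset$.

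For (1), recall that $Y=S(P(\Gamma_1),P(\Gamma_2))$ by construction. Each pair of pants $P(\Gamma_i)$ has area $2\pi$ and perimeter $\ell_{\gamma_{i1}}+\ell_{\gamma_{i2}}+\ell_{\gamma_{i3}}\le 2L$. Then \eqref{ell partial S(X_1,X_2)} controls $\ell(\partial Y)$ by $\ell(\partial P(\Gamma_1))+\ell(\partial P(\Gamma_2))$, and \eqref{area S(X_1,X_2)} gives $\area(Y)\le 2\pi+2\pi+2L+2L=4\pi+4L$; both quantities meet the bounds in the definition of $\textnormal{Sub}_L(X)$, so $Y\in\textnormal{Sub}_L(X)$.

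It remains to prove that $(\alpha_1,\alpha_2)$ fills $Y$. A figure-eight geodesic fills its pair of pants---its complement in $P(\Gamma_i)$ consists of three boundary collars---so the smallest subsurface with geodesic boundary containing $\alpha_i$ is $P(\Gamma_i)$ itself. Therefore any geodesic-bordered subsurface containing both $\alpha_1$ and $\alpha_2$ must contain $P(\Gamma_1)\cup P(\Gamma_2)$, and the smallest such is precisely $S(P(\Gamma_1),P(\Gamma_2))=Y$ by the definition of the construction $S(\cdot,\cdot)$. Conversely $\alpha_1\cup\alpha_2\subset P(\Gamma_1)\cup P(\Gamma_2)\subset Y$ and $Y$ has geodesic boundary, so the subsurface filled by $\alpha_1\cup\alpha_2$ is contained in $Y$; hence it equals $Y$. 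This says exactly that every complementary component of $\alpha_1\cup\alpha_2$ in $Y$ is a disc or a boundary-parallel cylinder, i.e. $(\alpha_1,\alpha_2)$ is a filling $2$-tuple in $Y$. I expect this last step to be the main obstacle: one must argue carefully that, despite the overlap $P(\Gamma_1)\cap P(\Gamma_2)\neq\emptyset$ and the boundary-shrinking and disc-filling performed in forming $S(\Gamma_1,\Gamma_2)$, the filling subsurface of $\alpha_1\cup\alpha_2$ is neither smaller than $Y$ (because each $\alpha_i$ already fills its own pants) nor larger than $Y$ (because $\alpha_1\cup\alpha_2$ lies inside $P(\Gamma_1)\cup P(\Gamma_2)$). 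The length and area estimates, by contrast, are immediate from \eqref{eq figure-8 length} and \eqref{ell partial S(X_1,X_2)}--\eqref{area S(X_1,X_2)}.
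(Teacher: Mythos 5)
Your proposal is correct and takes essentially the same route as the paper's proof: the same choice of $\alpha_i$ as the figure-eight geodesic winding around $\gamma_{i2}$ and $\gamma_{i3}$, the same length bound from \eqref{eq figure-8 length} (your explicit estimate $\cosh(\ell(\alpha_i)/2)\leq 3\cosh(L/2)$ just makes precise the paper's ``one may check''), and the same appeal to \eqref{ell partial S(X_1,X_2)} and \eqref{area S(X_1,X_2)} for $Y\in\textnormal{Sub}_L(X)$. Your filling argument is the paper's contradiction argument run in contrapositive form: where the paper takes a simple closed geodesic $\beta$ disjoint from $\alpha_1\cup\alpha_2$, concludes $\beta$ is disjoint from $P(\Gamma_1)\cup P(\Gamma_2)$ because each $\alpha_i$ fills its pants, and then disjoint from $S(\Gamma_1,\Gamma_2)$ by the construction, you package the same two facts as monotonicity of filled subsurfaces (containing $\alpha_i$ forces containing $P(\Gamma_i)$, and $Y$ is the minimal geodesic-bordered subsurface containing the union), which is an equivalent formulation.
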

\begin{proof}
For Part (1), it follows from \eqref{ell partial S(X_1,X_2)} and \eqref{area S(X_1,X_2)}  that for any $(\Gamma_1,\Gamma_2)\in \mathcal{C}(X,L)$, $$S(\Gamma_1,\Gamma_2)\in \textnormal{Sub}_{L}(X).$$

Part (2) is clear.

For Part (3), we first assume $\Gamma_i=(\gamma_{i1},\gamma_{i2},\gamma_{i3})\ (i=1,2)$. For $i=1,2$, let $\alpha_i$ be the figure-eight closed geodesic contained in $P(\Gamma_i)$ winding around $\gamma_{i2}$ and $\gamma_{i3}$. Then from the assumption that $(\Gamma_1,\Gamma_2)\in\mathcal{C}(X,L)$ and inequality \eqref{length-f-8}, we have
$$\ell(\alpha_1),\ell(\alpha_2)\leq L+2\log 6.$$ \noindent Now we show that $(\alpha_1,\alpha_2)$ is a filling 2-tuple in $S(\Gamma_1,\Gamma_2)$. Suppose not, then there exists a simple closed geodesic $\beta$ in $Y$ such that $\beta\cap (\alpha_1\cup\alpha_2)=\emptyset.$ Since $\alpha_i$ fills $P(\Gamma_i)\ (i=1,2)$, it follows that  $\beta\cap (P(\Gamma_1)\cup P(\Gamma_2))=\emptyset$.  Then by the construction of $S(\Gamma_1,\Gamma_2)$ we have $\beta\cap S(\Gamma_1,\Gamma_2)=\emptyset$, which is a contradiction. 

The proof is complete.
\end{proof}
Similar to \cite{WX22-GAFA}, we set the following assumption.

\textbf{Assumption $(\star)$.} Let $Y_0\in\textnormal{Sub}_{L}(X)$ satisfy
\begin{enumerate}
\item $Y_0$ is homeomorphic to $S_{g_0,k}$ for some $g_0\geq 0$ and $k>0$ with $$m=|\chi(Y_0)|=2g_0-2+k\geq 1;$$
\item the boundary $\partial Y_0$ is a simple closed multi-geodesic in $X$ consisting of $k$ simple closed geodesics which has $n_0$ pairs of simple closed geodesics for some $n_0\geq 0$ such that each pair corresponds to a single simple closed geodesic in $X$;
\item the interior of its complement $X\setminus S_{g_0,k}$ consists of $q$ components \\ $S_{g_1,n_1},...,S_{g_q,n_q}$ for some $q\geq 1$ where $\sum_{i=1}^q n_i=k-2n_0$.
\end{enumerate}

Our aim is to bound $\E\left[C(X,L)\right]$, from \eqref{e-C-3-p} it suffices to bound the three terms $\E\left[C_{\geq 3}(X,L)\right], \ \E\left[C_{0,4}(X,L)\right]$ and $\E\left[C_{1,2}(X,L)\right]$
separately.

\subsubsection{Bounds for $\E\left[C_{\geq 3}(X,L)\right]$}
We first bound $\E\left[C_{\geq 3}(X,L)\right]$ through using the method in \cite{WX22-GAFA}.
\begin{proposition}\label{prop C geq3}
Assume $L>1$ and $L=O (\log g)$, then for any fixed small $\epsilon>0$,
$$\E\left[C_{\geq 3}(X,L)\right]\prec \left(L^{67}e^{2L+\epsilon L}\frac{1}{g^3}+\frac{L^3e^{8L}}{g^{11}}\right).$$
\end{proposition}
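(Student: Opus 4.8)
The plan is to reduce $C_{\geq 3}(X,L)$ to a count of \emph{filling $2$-tuples} over embedded subsurfaces of $X$, and then to estimate its expectation via Mirzakhani's integration formula (Theorem \ref{thm Mirz int formula}) together with the filling-geodesic count of Theorem \ref{thm count fill k-tuple}. First I would invoke Lemma \ref{l-cor}: to each $(\Gamma_1,\Gamma_2)\in\mathcal{C}_{\geq 3}(X,L)$ it attaches a figure-eight pair $(\alpha_1,\alpha_2)$ that fills $Y=S(\Gamma_1,\Gamma_2)\in\text{Sub}_L(X)$, with $\ell(\alpha_i)\leq L+c$ and $|\chi(Y)|\geq 3$. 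Since a figure-eight geodesic fills a unique pair of pants and that pair of pants admits at most $6$ orderings of its boundary, the assignment $(\Gamma_1,\Gamma_2)\mapsto(\alpha_1,\alpha_2)$ is at most $36$-to-one, and every filling $2$-tuple fills a unique subsurface. Hence
\[
C_{\geq 3}(X,L)\ \leq\ 36\sum_{\substack{Y\subset X \text{ of geodesic boundary},\\ Y\in\text{Sub}_L(X),\ |\chi(Y)|\geq 3}} N_2^{\text{fill}}(Y,\,2L+2c).
\]

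Taking expectation, I would organize the embedded subsurfaces $Y$ by their topological type, exactly as in Assumption $(\star)$: a choice $Y\simeq S_{g_0,k}$ with $m=2g_0-2+k=|\chi(Y)|\geq 3$, a number $n_0$ of self-glued boundary pairs, and a complementary decomposition $X\setminus Y\simeq\bigsqcup_{i=1}^q S_{g_i,n_i}$. For each fixed type, $\partial Y$ is a simple closed multicurve in $X$, so Mirzakhani's integration formula converts the expected sum into an integral over the boundary-length variables of the product $V_{g_0,k}(\mathbf{x})\prod_i V_{g_i,n_i}(\mathbf{x}^{(i)})$ against the counting function, weighted by the lengths. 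The analytic input is then Theorem \ref{thm count fill k-tuple} with $k=2$, giving $N_2^{\text{fill}}(Y,2L+2c)\leq c(2,\epsilon,m)(1+2L+2c)\,e^{2L+2c-\frac{1-\epsilon}{2}\ell(\partial Y)}$.

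The exponential discount $e^{-\frac{1-\epsilon}{2}\ell(\partial Y)}$ is precisely what absorbs the $\sinh$-growth of the Weil-Petersson volume polynomials. Applying the upper bound of Theorem \ref{thm Vgn(x) small x} to both $V_{g_0,k}(\mathbf{x})$ and the complementary factors replaces each boundary length $x$ by $\sinh(x/2)\prec e^{x/2}$; combining these with the discount and integrating over the region cut out by $\text{Sub}_L(X)$ (so that $\ell(\partial Y)\leq 2L$ with all lengths $\geq 10\log L$) leaves the fixed budget factor $e^{2L}$ together with an $e^{\epsilon L}$ loss and a power of $L$. The volume ratio $V_{g_0,k}\prod_i V_{g_i,n_i}/V_g$ is then controlled by Theorem \ref{thm Vgn/Vgn+1} and the sum-of-products estimate Theorem \ref{thm sum-prod-V}, and contributes a power of $1/g$ governed by $m$; the minimal case $m=3$ produces the leading $1/g^3$.

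The main obstacle is the summation over \emph{all} topological types: the constant $c(2,\epsilon,m)$ in the filling count depends on $m$, and by Gauss-Bonnet the constraint $Y\in\text{Sub}_L(X)$ forces $3\leq m=\tfrac{1}{2\pi}\area(Y)\prec L$, so the range of $m$ itself grows with $L$. I would split the sum into a bounded-complexity regime and a high-complexity tail. For small $m$ the $m$-dependence of $c(2,\epsilon,m)$ is harmless and the sum is dominated by $m=3$, yielding the first term $L^{67}e^{(2+\epsilon)L}/g^3$, where the polynomial power of $L$ accumulates from the constants and the number of admissible types. For large $m$ the sharp filling count is too lossy, so there I would instead bound the configurations crudely — e.g. controlling the constituent boundary geodesics via Theorem \ref{thm count ge^L upp} — and exploit the very strong volume decay $1/g^m$ coming from $|\chi(Y)|$ large, which dominates the coarser exponential and produces the harmless tail term $L^3 e^{8L}/g^{11}$. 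Since $L\prec\log g$, both terms are $o(1)$, giving the claim; the delicate bookkeeping is verifying that these two regimes together never lose more than the stated powers of $L$, $e^{L}$, and $g$.
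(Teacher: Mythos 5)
Your overall architecture coincides with the paper's: the reduction via Lemma \ref{l-cor} with the $36$-to-one multiplicity, giving $C_{\geq 3}(X,L)\leq 36\sum_{Y}N_2^{\text{fill}}(Y,2L+2c)$; a bounded-complexity regime treated with Theorem \ref{thm count fill k-tuple}; and a high-complexity tail ($11\leq|\chi(Y)|\prec L$ by Gauss--Bonnet) treated with the crude count of Theorem \ref{thm count ge^L upp} squared, plus the strong $1/g^{m}$ volume decay. The paper splits at $|\chi(Y)|=10$ versus $11$ and then simply quotes \cite[Propositions 33 and 34]{WX22-GAFA} for the expectations of the subsurface sums, rather than rerunning Mirzakhani's integration formula; your plan to redo that integration by hand is in principle the same computation.

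However, your execution of the bounded-complexity estimate has a genuine quantitative error. You propose to apply the upper bound of Theorem \ref{thm Vgn(x) small x} to \emph{both} the subsurface factor $V_{g_0,k}(\mathbf{x})$ and the complementary factors, and assert that the discount $e^{-\frac{1-\epsilon}{2}\ell(\partial Y)}$ then absorbs all the $\sinh$-growth up to an $e^{\epsilon L}$ loss. It does not: each curve of the multicurve $\partial Y$ occupies exactly two boundary slots among the pieces ($Y$ and its complement), so the $\sinh$ bound on both sides yields growth $\prec e^{\sum_j x_j}$, which for $Y$ with no self-glued boundary component equals $e^{\ell(\partial Y)}$. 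Against $e^{-\frac{1-\epsilon}{2}\ell(\partial Y)}$ this leaves $e^{\frac{1+\epsilon}{2}\ell(\partial Y)}\leq e^{(1+\epsilon)L}$, so your main term becomes $\prec L^{O(1)}e^{(3+\epsilon)L}/g^{3}$ --- weaker than the stated bound by a factor $e^{L}\asymp g/\log g$, and fatal downstream: at $L_g=\log g-\log\log g+\omega(g)$, after dividing by $\E\left[\Nnumber(X,L_g)\right]^2\asymp e^{2\omega(g)}$ the quotient is of order $g^{\epsilon}$ up to polylog factors and does not tend to $0$. The repair, which is what \cite[Proposition 34]{WX22-GAFA} encapsulates, is to treat the two sides asymmetrically: for bounded type $m=2g_0-2+k\leq 10$, the factor $V_{g_0,k}(x_1,\cdots,x_k)$ is by Theorem \ref{mirz07} a \emph{polynomial} of bounded degree $3g_0-3+k$, so the $Y$-side contributes only a power of $L$; only the complementary factors are bounded via Theorem \ref{thm Vgn(x) small x}, contributing $e^{\frac{1}{2}\sum_j x_j}\leq e^{\frac{1}{2}\ell(\partial Y)}$ (each curve has at most one complement slot), which the discount beats with margin $e^{\frac{\epsilon}{2}\ell(\partial Y)}\leq e^{\epsilon L}$. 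Your high-complexity tail, by contrast, is sound and matches the paper's (via \cite[Proposition 33]{WX22-GAFA}): there the full $\sinh$ bound is affordable because the decay $1/g^{11}$ dominates $e^{8L}\prec g^{8}$.
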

\begin{proof}
For $(\Gamma_1,\Gamma_2)\in \mathcal{C}_{\geq 3}(X,L)$, by Lemma \ref{l-cor}, there exist a subsurface $Y\in\textnormal{Sub}_L(X)$ and a filling 2-tuple
 $(\alpha_1,\alpha_2)$   in $Y$  with total length $\leq 2L+4\log 6$, and $\alpha_i$ is a filling figure-eight closed geodesic in $P(\Gamma_i)$ for $i=1,2$. Since for any such triples $(\alpha_1,\alpha_2,Y)$, there are at most $36$ different pairs  $(\Gamma_1,\Gamma_2)\in\mathcal{C}_{\geq 3}(X,L)$ corresponding to it, 
 it follows that \begin{equation*}
C_{\geq 3}(X,L)\\
\leq \sum_{\substack{Y \in \operatorname{Sub}_L(\mathrm{X}) ; \\ 3 \leq|\chi(Y)|\leq \left[\frac{4 L+4\pi}{2 \pi}\right]}}36\cdot N_2^{\text{fill}}(Y,2L+4\log 6)
 \end{equation*}
where $N_2^{\text{fill}}(Y,2L+4\log 6)$ is defined in Subsection \ref{ss-count-2}. Therefore we have \begin{equation}\label{eq C geq3 expectation}
\begin{aligned}
     &\ \ \ \ \E\left[C_{\geq 3}(X,L)\right]\\
     &\leq \frac{1}{V_g}\int_{\M_g}\sum_{\substack{Y \in \operatorname{Sub}_L(\mathrm{X}) ; \\ 3 \leq|\chi(Y)|\leq \left[\frac{4 L+4\pi}{2 \pi}\right] }}36N_2^{\text{fill}}(Y,2L+4\log 6)\cdot 1_{[0,2L]}(\ell(\partial Y))dX.
 \end{aligned}
 \end{equation}
 Now we divide the summation above into following two parts: the first part consists of all subsurfaces $Y\in\textnormal{Sub}_L(X)$ such that $3\leq |\chi(Y)|\leq 10$; the second part consists of all subsurfaces $Y\in\textnormal{Sub}_L(X)$ such that $10< |\chi(Y)|\leq \left[\frac{4 L+4\pi}{2 \pi}\right]$.

 For the first part, assume that $Y\cong S_{g_0,k}\in\textnormal{Sub}_L(X)$ satisfies \textit{Assumption} $(\star)$ with an additional assumption that
\begin{align}\label{e-ass}
3\leq m=2g_0-2+k\leq 10.
\end{align}
From \cite[Proposition 34]{WX22-GAFA} and Theorem \ref{thm count fill k-tuple}, we have that for any fixed $0<\epsilon<\frac{1}{2}$,
\begin{equation}\label{e-in}\begin{aligned}
& \frac{1}{V_g}\int_{\mathcal{M}_g}\sum_{\substack{Y \in \operatorname{Sub}_L(\mathrm{X}) ; \\ Y\cong S_{g_0,k}}}36N_2^{\text{fill}}(Y,2L+4\log 6)\cdot 1_{[0,2L]}(\ell(\partial Y))dX\\
\prec&\frac{1}{V_g}\int_{\mathcal{M}_g}\sum_{\substack{Y \in \operatorname{Sub}_L(\mathrm{X}) ; \\ Y\cong S_{g_0,k}}}Le^{2L-\frac{1-\epsilon}{2}\ell(\partial Y)}\cdot 1_{[0,2L]}(\ell(\partial Y))dX \ (\textit{\rm{by Theorem \ref{thm count fill k-tuple}}})\\
=&Le^{\frac{3}{2}L}\times\frac{1}{V_g}\int_{\mathcal{M}_g}\sum_{\substack{Y \in \operatorname{Sub}_L(\mathrm{X}) ; \\ Y\cong S_{g_0,k}}}e^{\frac{1}{2}L-\frac{1-\epsilon}{2}\ell(\partial Y)}\cdot 1_{[0,2L]}(\ell(\partial Y))dX\\
\prec& Le^{\frac{3}{2}L}\times L^{66}e^{\frac{1}{2}L+\epsilon L}\frac{1}{g^m}  \quad (\textit{\rm{by \cite[Proposition 34]{WX22-GAFA}}})\\
=&L^{67}e^{2L+\epsilon L}\frac{1}{g^m}.
\end{aligned} \end{equation}
Since there are at most finite pairs $(g_0,k)$ satisfying the assumption \eqref{e-ass}, take sum over all possible subsurfaces $Y$ for inequality \eqref{e-in}, we have 
\begin{equation}\label{part-1}
\begin{aligned}
&\frac{1}{V_g}\int_{\mathcal{M}_g}\sum_{\substack{Y \in \operatorname{Sub}_L(\mathrm{X}) ; \\ 3 \leq|\chi(Y)| \leq 10}}36N_2^{\text{fill}}(Y,2L+4\log 6)\cdot 1_{[0,2L]}(\ell(\partial Y))dX\\
\prec& L^{67}e^{2L+\epsilon L}\frac{1}{g^3}.
\end{aligned}
\end{equation}

For the second part, firstly by \eqref{area S(X_1,X_2)} and the Gauss-Bonnet formula we know that $|\chi(Y)|\prec L$. Since $\ell(\partial Y)\leq 2L$, by  Theorem \ref{thm count ge^L upp} we have
\begin{align}\label{e-count-3}
N_2^{\text{fill}}(Y,2L+4\log 6)&\prec \left(\left|\chi(Y)\right|e^{2L}\right)^2
\prec L^2e^{\frac{9}{2}L-\frac{1}{4}\ell(\partial Y)}.
\end{align}
From \eqref{e-count-3} and \cite[Proposition 33]{WX22-GAFA}, we have
\begin{equation}\label{part-2}
\begin{aligned}
& \frac{1}{V_g}\int_{\mathcal{M}_g}\sum_{\substack{Y \in \operatorname{Sub}_L(\mathrm{X}) ; \\ 11 \leq|\chi(Y)| \leq \left[\frac{4 L+4\pi}{2 \pi}\right] }}36N_2^{\text{fill}}(Y,2L+8\log 2)\cdot 1_{[0,2L]}(\ell(\partial Y))dX\\
\prec&\frac{1}{V_g}\int_{\mathcal{M}_g}\sum_{\substack{Y \in \operatorname{Sub}_L(\mathrm{X}) ; \\ 11 \leq|\chi(Y)| \leq \left[\frac{4 L+4\pi}{2 \pi}\right] }}L^2e^{\frac{9}{2}L-\frac{1}{4}\ell(\partial Y)}\cdot 1_{[0,2L]}(\ell(\partial Y))dX\\
\prec& L^2e^{\frac{9}{2}L}\times \frac{1}{V_g}\int_{\mathcal{M}_g}\sum_{\substack{Y \in \operatorname{Sub}_L(\mathrm{X}) ; \\ 11 \leq|\chi(Y)| \leq \left[\frac{4 L+4\pi}{2 \pi}\right] }}e^{-\frac{1}{4}\ell(\partial Y)}\cdot 1_{[0,2L]}(\ell(\partial Y))dX\\
\prec& L^2e^{\frac{9}{2}L}\times Le^{\frac{7}{2}L}\frac{1}{g^{11}}\quad (\textit{\rm{by \cite[Proposition 33]{WX22-GAFA}}})\\
=&\frac{L^3e^{8L}}{g^{11}}.
\end{aligned}
\end{equation}
Then combining \eqref{eq C geq3 expectation}, \eqref{part-1} and \eqref{part-2}, we complete the proof.
\end{proof}

\subsubsection{Bounds for $\mathbb{E}_{\textnormal{WP}}^g\left[C_{1,2}(X,L)\right]$}
In this subsection, we give an estimate for $\E\left[C_{1,2}(X,L)\right]$, the method is different from the one in Proposition \ref{prop C geq3}.
\begin{proposition}\label{c12}
      For $L>1$ and large $g$,  
    $$
\E\left[C_{1,2}(X,L)\right]\prec \frac{e^{2L}}{g^2}.
    $$
\end{proposition}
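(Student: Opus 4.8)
The plan is to exploit the fact that, although $\Gamma_1\cup\Gamma_2$ is \emph{not} a disjoint union of simple closed geodesics (they intersect, which is the entire source of difficulty in $\mathcal{C}(X,L)$), its filling subsurface $S(\Gamma_1,\Gamma_2)\simeq S_{1,2}$ has genuinely \emph{simple} geodesic boundary. Writing $\Gamma_i=(\gamma_{i1},\gamma_{i2},\gamma_{i3})$, the classification recorded in Figure~\ref{fig:classifyC} tells us that $\partial S(\Gamma_1,\Gamma_2)=\gamma_{11}\cup\gamma_{21}$ is a union of two disjoint simple closed geodesics, so that $(\gamma_{11},\gamma_{21})$ is an ordered pair of disjoint simple closed curves with $X\setminus(\gamma_{11}\cup\gamma_{21})\simeq S_{1,2}\cup S_{g-2,2}$. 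First I would reorganize $C_{1,2}(X,L)$ as a sum over this simple boundary multicurve $(\gamma_{11},\gamma_{21})$, whose weight at a given copy $Y=S(\Gamma_1,\Gamma_2)\in\M_{1,2}(L_1,L_2)$ (with $L_1=\ell(\gamma_{11})$, $L_2=\ell(\gamma_{21})$) is the number of admissible ``internal'' data $(\Gamma_1,\Gamma_2)$ lying inside $Y$. This recasts $C_{1,2}$ into a Mirzakhani-type sum to which Theorem~\ref{thm Mirz int formula} can be applied on the simple multicurve $(\gamma_{11},\gamma_{21})$.

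The key step is then to bound this internal count uniformly over all $Y\in\M_{1,2}(L_1,L_2)$. Inside $Y$ the pair $\{\gamma_{12},\gamma_{13}\}$ bounds a pair of pants with the boundary $\gamma_{11}$, and likewise $\{\gamma_{22},\gamma_{23}\}$ bounds a pair of pants with $\gamma_{21}$. After checking the finitely many relative positions of $P(\Gamma_1)$ and $P(\Gamma_2)$ compatible with $S(\Gamma_1,\Gamma_2)\simeq S_{1,2}$ (the overlap condition $P(\Gamma_1)\cap P(\Gamma_2)\neq\emptyset$ excludes the degenerate cases in which an interior curve coincides with a boundary of $Y$, which would otherwise bring in the $\mathcal R$-type bound \eqref{eq counting in S12 single}), each of these families is precisely one of the McShane--Mirzakhani ``pairs'' sums of Theorem~\ref{thm McShane id}. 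Hence the bound \eqref{eq counting in S12 pairs} applies to each factor, and the number of internal configurations is at most
\[
\overline{F}(L_1,L_2):=\left(\min\left\{\tfrac{L_1}{\mathcal D(L_1,L,0)},\ \tfrac{L_2}{\mathcal D(L_2,L,0)}\right\}\right)^2,
\]
a quantity depending only on the boundary lengths. Using Lemma~\ref{thm estimation R,D}(3) with $x=L_i$, $y=L$, $z=0$ and $L_i\leq L$ gives $\tfrac{L_i}{\mathcal D(L_i,L,0)}\prec L\,e^{(L-L_i)/2}=L\,e^{L/2}e^{-L_i/2}$, so $\overline F(L_1,L_2)\prec L^2 e^{L}e^{-(L_1+L_2)/2}$.

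Finally I would feed $\overline F$ into Mirzakhani's integration formula for the ordered pair $(\gamma_{11},\gamma_{21})$, using $V_{g-2,2}(L_1,L_2)/V_g\prec g^{-2}\prod_i \tfrac{\sinh(L_i/2)}{L_i/2}$ (from Theorems~\ref{thm Vgn/Vgn+1} and~\ref{thm Vgn(x) small x}) together with the fact that $V_{1,2}(L_1,L_2)$ is a fixed polynomial. The crucial mechanism is an \emph{exact exponential cancellation}: the product $\prod_i\tfrac{\sinh(L_i/2)}{L_i/2}\cdot L_1L_2\prec e^{(L_1+L_2)/2}$ coming from the volume factor is annihilated by the $e^{-(L_1+L_2)/2}$ decay built into $\overline F$ via the $\mathcal D$-denominators, so that after integrating $L_1,L_2\in[10\log L,L]$ the surviving expression is only $\mathrm{poly}(L)\,e^{L}/g^2$, which is $\prec e^{2L}/g^2$ since $L\prec\log g$.

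The main obstacle, and the reason the method of Proposition~\ref{prop C geq3} cannot simply be reused, is precisely this cancellation. Applying the generic filling-multigeodesic count Theorem~\ref{thm count fill k-tuple} to the pair $(\alpha_1,\alpha_2)$ produced by Lemma~\ref{l-cor} gives weight $\prec L\,e^{2L-\frac{1-\epsilon}{2}\ell(\partial Y)}$, whose decay exponent $-\tfrac{1-\epsilon}{2}$ fails to absorb the volume growth $e^{+\ell(\partial Y)/2}$, leaving a factor $e^{\frac{\epsilon}{2}\ell(\partial Y)}$ that integrates to a fatal $e^{\epsilon L}\asymp g^{\epsilon}$ over the small-complexity subsurface $S_{1,2}$ (where the volume gain is only $g^{-2}$). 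Only the sharp, essentially linear-in-length behaviour of the McShane--Mirzakhani function $\mathcal D$ produces the matching $e^{-L_i/2}$ decay needed for exact cancellation. Carrying out and bookkeeping this cancellation across all admissible relative positions of $(\Gamma_1,\Gamma_2)$ in $S_{1,2}$ is the heart of the argument.
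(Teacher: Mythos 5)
You have correctly isolated the paper's central mechanism for $C_{1,2}$ --- anchor on simple closed curves, use the McShane--Mirzakhani bounds of Theorem~\ref{thm McShane id} and Lemma~\ref{thm estimation R,D} to manufacture $e^{-\ell/2}$ decay that Theorem~\ref{thm count fill k-tuple} cannot supply, then integrate via Theorem~\ref{thm Mirz int formula} --- but your classification of the relative positions is wrong, and this is a genuine gap rather than bookkeeping. Figure~\ref{fig:classifyC} depicts one instance, not a classification, and the overlap condition $P(\Gamma_1)\cap P(\Gamma_2)\neq\emptyset$ does \emph{not} exclude the other configurations. Besides your case ``one boundary component of $Y=S(\Gamma_1,\Gamma_2)$ from each tuple'' (the paper's Lemma~\ref{lemma122}), two non-degenerate families remain: (i) one tuple contains \emph{both} components of $\partial Y$ (Lemma~\ref{lemma121}); there the companion tuple may also contain both, leaving a single interior curve to count, so the $\mathcal R$-type bound \eqref{eq counting in S12 single} that you claim is excluded is genuinely needed; and (ii) both tuples contain the \emph{same} single component $\gamma_1$, and the other boundary $\gamma_2$ of $Y$ belongs to neither tuple (Lemma~\ref{lemma123}). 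Case (ii) breaks your setup outright: since $\gamma_2\notin\Gamma_1\cup\Gamma_2$, nothing forces $\ell(\gamma_2)\leq L$ --- it can be as large as $4L-20\log L$ --- so your Mirzakhani sum over boundary pairs with both lengths in $[10\log L,L]$ misses this family entirely. Controlling it requires a geometric input absent from your proposal: because $P(\Gamma_1)\cup P(\Gamma_2)$ fills $Y$, the complementary component adjacent to $\gamma_2$ is a cylinder whose other boundary consists of arcs of $\alpha_1,\beta_1,\alpha_2,\beta_2$, whence $\ell(\gamma_2)\leq\ell(\alpha_1)+\ell(\beta_1)+\ell(\alpha_2)+\ell(\beta_2)\leq 4L-2\ell(\gamma_1)$ (see \eqref{eq cylinder gamma2}--\eqref{eqs gamma2}); only with this constraint does the corresponding integral \eqref{expectation123} close, producing the borderline terms $L^5 e^{2L-5\log L}$ that still fit under $e^{2L}/g^2$.

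There is also a quantitative flaw inside the one case you do treat. The tuples in $\Nset(X,L)$ are \emph{ordered}, with the asymmetric constraints $\ell(\gamma_1)\leq L$, $\ell(\gamma_2)+\ell(\gamma_3)\leq L$, and the boundary curve of $Y$ need not be the first entry; hence the internal pair of each tuple is only guaranteed total length $\leq 2L$, so the correct pairs bound is $L_i/\mathcal D(L_i,2L,0)\prec L\,e^{L}e^{-L_i/2}$, not $L_i/\mathcal D(L_i,L,0)\prec L\,e^{L/2}e^{-L_i/2}$ (the paper consistently uses $\mathcal D(\cdot,2L,0)$). With the correct threshold your product $\overline F\prec L^2e^{2L}e^{-(L_1+L_2)/2}$ cancels the volume growth $e^{(L_1+L_2)/2}$ only down to a \emph{flat} $L^2e^{2L}$, and after the $V_{1,2}$ polynomial and the area of $[10\log L,L]^2$ you get $\prec L^{8}e^{2L}/g^2$, which is larger than $\E\big[\Nnumber(X,L)\big]^2\asymp L^2e^{2L}/g^2$ by a power of $L$ --- fatal for \eqref{i-e-aim}. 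This is precisely why the paper does not take a symmetric product of two McShane counts: it anchors Mirzakhani's formula on the quadruple $(\gamma_1,\gamma_2,\alpha_1,\beta_1)$, paying only polynomially for $\Gamma_1$'s internal pair, and spends a \emph{single} $\mathcal D$-count on $\Gamma_2$, so only one factor $e^{L}e^{-y/2}$ appears and the net exponential in \eqref{expectation122} is $e^{3L/2}$, with polynomial room to spare. (Two smaller omissions: $X\setminus\overline Y$ can also be $S_{g_1,1}\cup S_{g_2,1}$ with $g_1+g_2=g-1$, handled via Theorem~\ref{thm sum-prod-V} in \eqref{cond s12 vg-2/vg}; and $\overline{S(\Gamma_1,\Gamma_2)}$ itself enters the $C_{0,4}$ analysis, not this one.) In short: your Lemma~\ref{lemma122}-type computation has the right shape, but as written the proof covers one of three configurations, and even there the cancellation only ``closes'' because of the unjustified threshold $\Lambda=L$.
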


The estimates for pairs $(\Gamma_1,\Gamma_2)$ with  $S(\Gamma_1,\Gamma_2)=Y$  in  Lemma \ref{l-cor} are not good enough. We need to accurately classify the relative position of $(\Gamma_1,\Gamma_2)$ in $Y$. For any $(\Gamma_1,\Gamma_2)\in\mathcal{C}_{1,2}(X,L)$, we have $S(\Gamma_1,\Gamma_2)\cong S_{1,2}$, hence $\partial S(\Gamma_1,\Gamma_2)=\{\gamma_1,\gamma_2\}$ consists of two simple closed geodesics. 
 Recall that both $\Gamma_1$ and $\Gamma_2$ are simple closed multi-geodesics satisfying $X\setminus \Gamma_i\cong  S_{0,3} \cup S_{g-2,3}$, so each of $\Gamma_i$ contains at least one of $\{\gamma_1, \gamma_2\}$. There are three different possible cases: 
 \begin{enumerate}
     \item One of $\Gamma_i$ contains both two curves $\{\gamma_1, \gamma_2\}$;
     \item $\Gamma_1$ contains exactly one of $\{\gamma_1,\gamma_2\}$ and $\Gamma_2$ contains the other;
     \item Exactly one of $\{\gamma_1,\gamma_2\}$ is contained in both  $\Gamma_1$ and $\Gamma_2$.
 \end{enumerate}
 For $i=1,2,3$, denote by 
 $$\mathcal{C}_{1,2}^i(X,L)=\{(\Gamma_1,\Gamma_2)\in \MC_{1,2}(X,L);\ \Gamma_1,\Gamma_2 \text{ satisfy the i-th condition above}\}$$
 and 
 $$C_{1,2}^i(X,L)=\#\mathcal{C}_{1,2}^i(X,L).$$
 We estimate $C_{1,2}^i(X,L)$ for $L>1$ and  $i=1,2,3$ in the following lemmas.
\begin{lemma}\label{lemma121}For compact hyperbolic surface $X\in \M_g$ and $L>1$, 
\begin{equation}\label{equation121}
    \begin{aligned}
C_{1,2}^1(X,L)\prec&\sum_{(\gamma_1,\gamma_2,\gamma_3)\in\mathcal{G}_{1,2}^1(X)} 1_{[0,L]^3}(\ell(\gamma_1),\ell(\gamma_2),\ell(\gamma_3)) \\
\cdot &\left(\frac{\ell(\gamma_1)}{\cR(\ell(\gamma_1),\ell(\gamma_2),L)}+\frac{\ell(\gamma_1)}{\cD(\ell(\gamma_1),2L,0)}\right),\\
\end{aligned}
\end{equation} 
where $\mathcal{G}_{1,2}^1(X)$ consists of all simple closed multi-geodesics $(\gamma_1,\gamma_2,\gamma_3)$ such that $\gamma_1,\ \gamma_2$ cut off a subsurface $Y_0\cong S_{1,2}$ from $X$ and
$Y_0\setminus \gamma_3\cong S_{0,3}\cup S_{1,1}$.
\end{lemma}
\begin{proof}
    For any $(\Gamma_1,\Gamma_2)\in\mathcal{C}_{1,2}^1(X,L)$, WLOG, one may assume that $Y={S(\Gamma_1,\Gamma_2)}$ and  $\Gamma_1$ contains $\partial Y=\{\gamma_1,\gamma_2\}.$ Denote  by $\gamma_3$ the remaining simple closed geodesic in $\Gamma_1$. Then $(\gamma_1,\gamma_2,\gamma_3)\in\mathcal{G}_{1,2}^1(X)$ and 
    \begin{equation}\label{condition s121}
\ell(\gamma_1),\ \ell(\gamma_2),\ \ell(\gamma_3)\in [0,L].
 \end{equation}
 Consider the map $$\pi: (\Gamma_1,\Gamma_2)\mapsto (\gamma_1,\gamma_2,\gamma_3).$$
 Then we have 
 \begin{align}\label{count-s1}
     C_{1,2}^1(X,L)\leq\sum\limits_{(\gamma_1,\gamma_2,\gamma_3)\in\mathcal{G}_{1,2}^1(X)}1_{[0,L]^3}(\ell(\gamma_1),\ell(\gamma_2),\ell(\gamma_3))\cdot\#\pi^{-1}(\gamma_1,\gamma_2,\gamma_3).
 \end{align}
  Now we estimate $\#\pi^{-1}(\gamma_1,\gamma_2,\gamma_3)$ for fixed $(\gamma_1,\gamma_2,\gamma_3)$. Since $\Gamma_1$ is fixed, it suffices to count the number of all $\Gamma_2$'s such that $$(\Gamma_1,\Gamma_2)\in \MC_{1,2}(X,L) \text{ and }P(\Gamma_2)\subset Y.$$ 
  Since $\Gamma_1, \Gamma_2\in \mathcal{N}_{(0,3),\star}^{(g-2,3)}(X,L)$ and $(\Gamma_1,\Gamma_2)\in \MC_{1,2}(X,L)$, it follows that $\Gamma_2$ must contain at least one of $\gamma_1$ and $\gamma_2$. 

 \underline{Case-1: $\Gamma_2$ contains $\gamma_1\cup\gamma_2$} (see Figure \ref{figure-s12-1-case1} for an illustration). 
For this case, the remaining simple closed geodesic $\tilde{\gamma}$ in $\Gamma_2$ satisfies $\ell_{\tilde{\alpha}}(X)\leq L$ and $\tilde{\alpha},\ \gamma_1,\ \gamma_2$ bound a pair of pants in $Y$ as in Figure \ref{figure-s12-1-case1}. 
 \begin{figure}
    \centering
    \includegraphics[width=2.5 in]{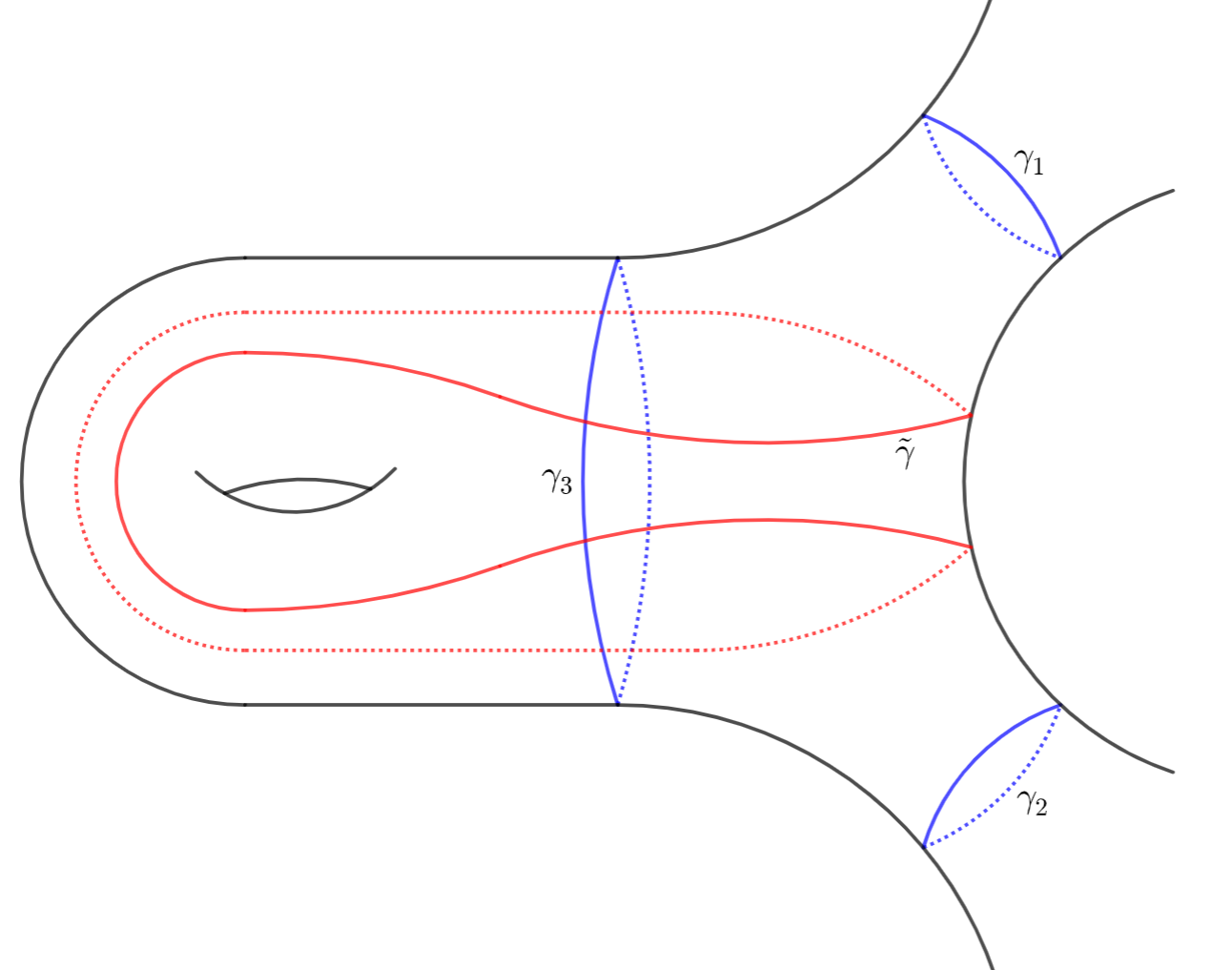}
    \caption{Case-1: 
 $\Gamma_1=(\gamma_1,\gamma_2,\gamma_3)$ and $\Gamma_2=(\gamma_1,\gamma_2,\tilde{\gamma})$ in $Y=S(\Gamma_1,\Gamma_2)\cong S_{1,2}$ with $\partial Y=\{\gamma_1,\gamma_2\}$ }
    \label{figure-s12-1-case1}
\end{figure}
Then it follows from \eqref{eq counting in S12 single}  that the number of such $\tilde\gamma$'s is at most $$
\frac{\ell(\gamma_1)}{\cR(\ell(\gamma_1),\ell(\gamma_2),L)}.$$

\underline{Case-2: $\Gamma_2$ contains only one of  $\gamma_1,\gamma_2$} (see Figure \ref{figure-s12-1-case2} for an illustration). 
 \begin{figure}
    \centering
    \includegraphics[width=2.5 in]{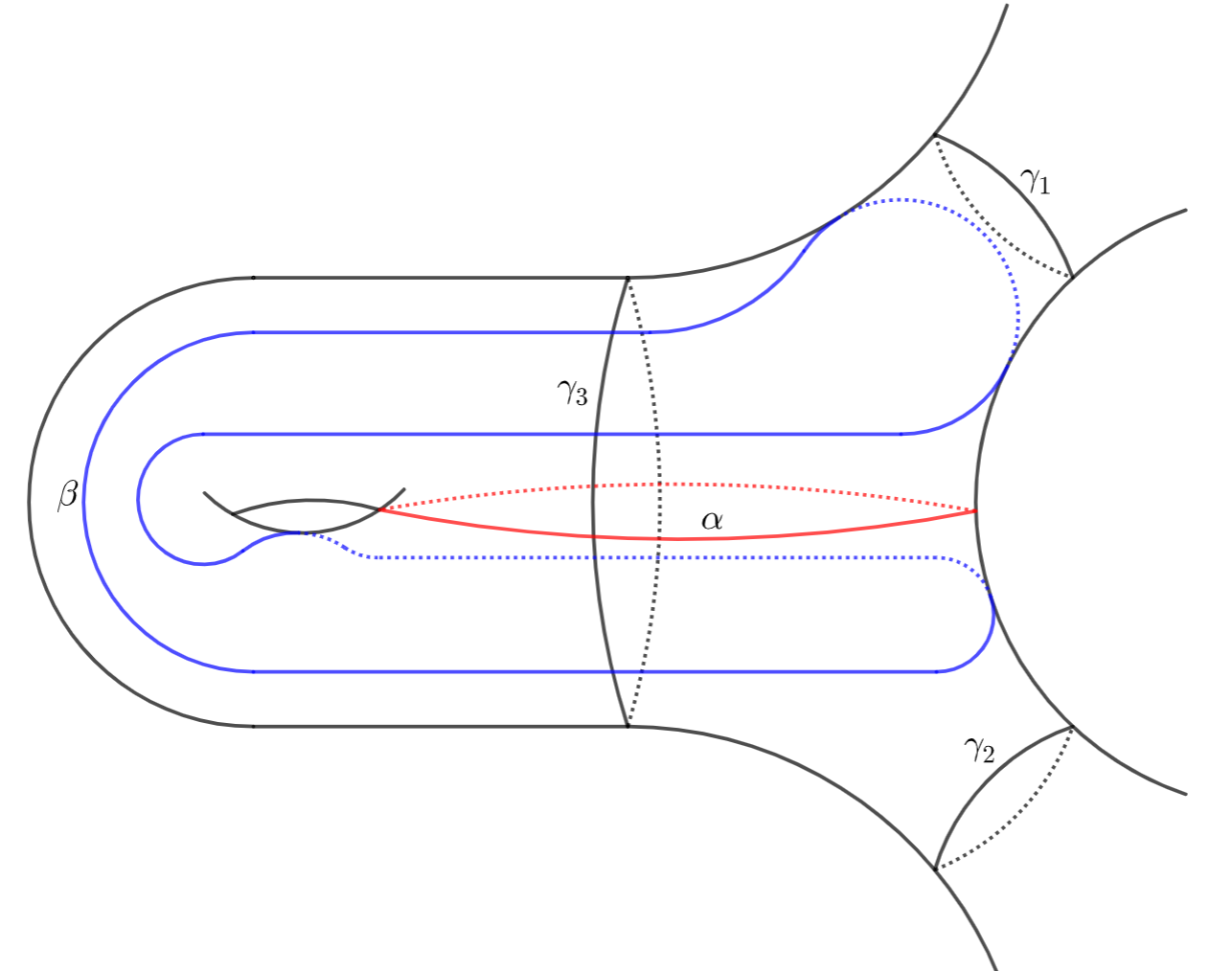}
    \caption{Case-2: 
 $\Gamma_1=(\gamma_1,\gamma_2,\gamma_3)$, $\Gamma_2=(\gamma_1,\alpha,\beta)$ in $Y=S(\Gamma_1,\Gamma_2)\cong S_{1,2}$ with $\partial Y=\{\gamma_1,\gamma_2\}$ }
    \label{figure-s12-1-case2}
\end{figure}
WLOG, one may assume that $\Gamma_2$ contains only $\gamma_1$. Consider the remaining two simple closed geodesics $\alpha,\beta$ in $\Gamma_2$, then we have $\ell(\alpha)+\ell(\beta)\leq 2L$ and $\alpha,\ \beta,\ \gamma_1$ bound a pair of pants in $Y$ as in  Figure \ref{figure-s12-1-case2}. Then it follows from \eqref{eq counting in S12 pairs}  that the number of such pairs of $(\alpha,\beta)$'s  is at most $$
\frac{\ell(\gamma_1)}{\cD(\ell(\gamma_1),2L,0)}.$$
 Combining these two cases, we have  
\[\#\pi^{-1}(\gamma_1,\gamma_2,\gamma_3)\leq 200\cdot \left(\frac{\ell(\gamma_1)}{\cR(\ell(\gamma_1),\ell(\gamma_2),L)}+\frac{\ell(\gamma_1)}{\cD(\ell(\gamma_1),2L,0)}\right).\]
Together with \eqref{count-s1}, one may complete the proof.
\end{proof}
\begin{rem*}
     The coefficient $200$ in the proof of Lemma \ref{lemma121} comes from the symmetry of the three boundary components of a pair of pants, the symmetry of $\Gamma_1$ and $\Gamma_2$,  and is not essential. What we need is a universal positive constant.
\end{rem*}
\begin{lemma}\label{lemma122}
For compact hyperbolic surface $X\in\M_g$ and $L>1$, 
    \begin{equation}\label{equation122}
    \begin{aligned}
C_{1,2}^2(X,L)
\prec&\sum_{(\gamma_1,\gamma_2,\alpha_1,\beta_1)\in\mathcal{G}_{1,2}^2(X)} 1_{[0,L]^4}(\ell(\gamma_1),\ell(\gamma_2),\ell(\alpha_1),\ell(\beta_1)) \\
\cdot&\frac{\ell(\gamma_2)}{\cD(\ell(\gamma_2),2L,0)},
\end{aligned}
\end{equation}
where $\mathcal{G}_{1,2}^2(X)$ consists of all simple closed multi-geodesics $(\gamma_1,\gamma_2,\alpha_1,\beta_1)$ such that $\gamma_1,\ \gamma_2$ cut off a subsurface $Y_0\cong S_{1,2}$ from $X$ and $Y_0\setminus\{\alpha_1,\beta_1\}\cong S_{0,3}\cup S_{0,3}$ (see Figure \ref{figure-s12-2} for an illustration). 

\end{lemma}

\begin{proof}
 For any $(\Gamma_1,\Gamma_2)\in\mathcal{C}_{1,2}^2(X,L)$,
WLOG, one may assume that $Y=S(\Gamma_1,\Gamma_2)$, $\partial Y=\{\gamma_1,\gamma_2\}$, $\Gamma_1$ only contains $\gamma_1$ and $\Gamma_2$ only contains $\gamma_2$. The remaining two simple closed geodesics $\alpha_1,\beta_1$ contained in $\Gamma_1$ separate $Y$ into two copies of $S_{0,3}$. Then $(\gamma_1,\gamma_2,\alpha_1,\beta_1)\in\mathcal{G}_{1,2}^2(X)$  and \begin{equation}\label{condition s122}
\ell(\gamma_1),\ell(\gamma_2),\ell(\alpha_1),\ell(\beta_1)\in [0, L].
\end{equation} 
Consider the map $$
 \pi:(\Gamma_1,\Gamma_2)\mapsto (\gamma_1,\gamma_2,\alpha_1,\beta_1).
 $$ 
Then we have 
 \begin{equation}\label{count-s2}
\begin{aligned}
C_{1,2}^2(X,L)&\leq\sum\limits_{(\gamma_1,\gamma_2,\alpha_1,\beta_1)\in\mathcal{G}_{1,2}^2(X)}1_{[0,L]^4}(\ell(\gamma_1),\ell(\gamma_2),\ell(\alpha_1),\ell(\beta_1))\\
     &\cdot\#\pi^{-1}(\gamma_1,\gamma_2,\alpha_1,\beta_1). 
 \end{aligned}
 \end{equation}
For any fixed $(\gamma_1,\gamma_2,\alpha_1,\beta_1)$ and $(\Gamma_1,\Gamma_2)\in\mathcal{C}_{1,2}^2(X,L)$ with 
 $$\pi(\Gamma_1,\Gamma_2)=(\gamma_1,\gamma_2,\alpha_1,\beta_1),$$
 we have $\Gamma_1$ is fixed, $\gamma_1\notin \Gamma_2, \gamma_2\in \Gamma_2$ and $P(\Gamma_2)\subset Y$. 
 \begin{figure}[b]
    \centering
    \includegraphics[width=2.5 in]{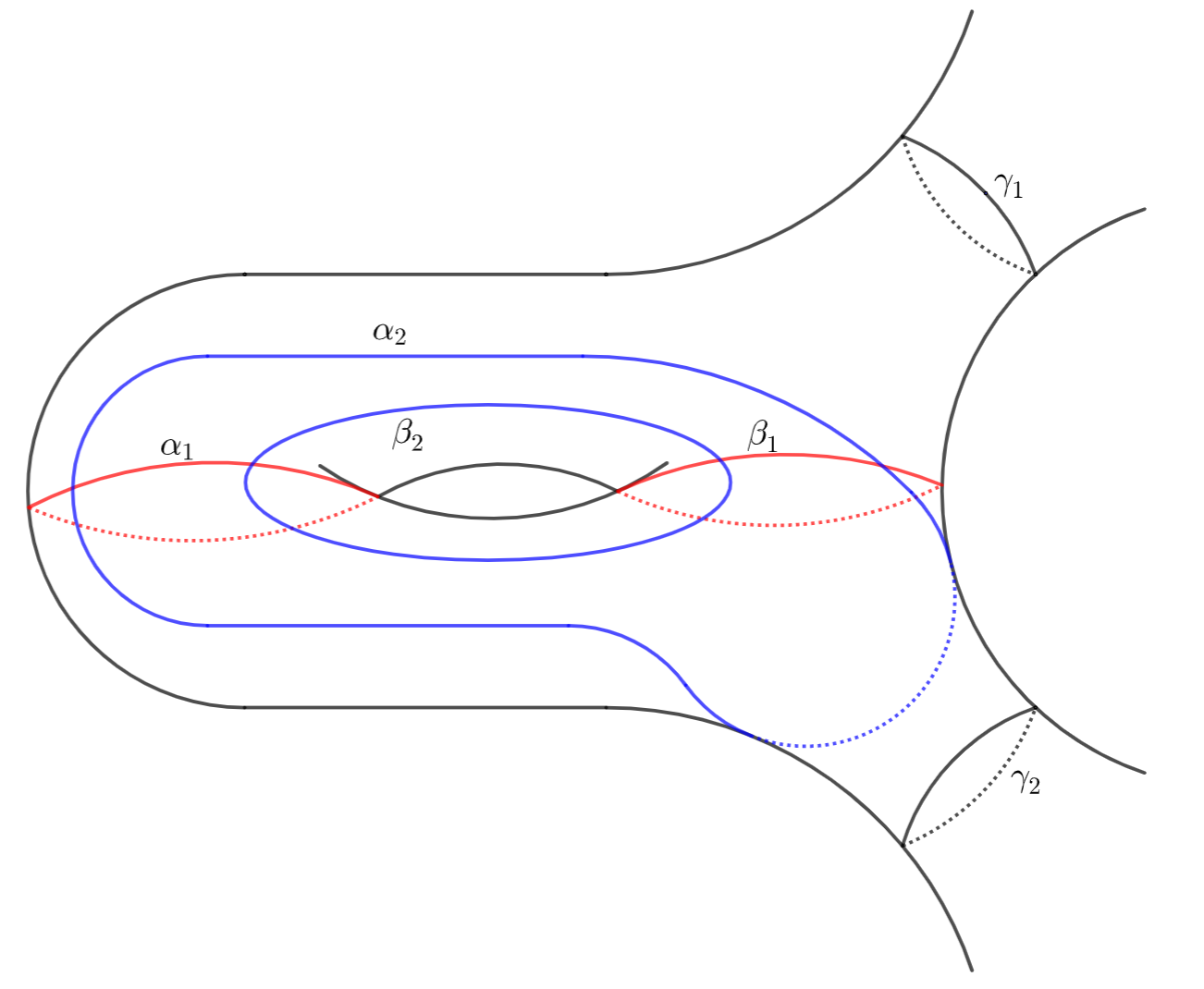}
    \caption{
    $\Gamma_1=\{\gamma_1,\alpha_1,\beta_1\}$ and $\Gamma_2=\{\gamma_2,\alpha_2,\beta_2\}$ in $Y=S(\Gamma_1,\Gamma_2)\cong S_{1,2}$ with $\partial Y=\{\gamma_1,\gamma_2\}$
    }
    \label{figure-s12-2}
\end{figure}
 Such  a $\Gamma_2$ is uniquely determined by the remaining two simple closed geodesics $\alpha_2,\ \beta_2$ contained in $Y$, and  
 $$Y\setminus\{\alpha_2,\beta_2\}\cong S_{0,3}\cup S_{0,3}.$$ 
 It is clear that $\ell(\alpha_2)+\ell(\beta_2)\leq 2L$ and $\alpha_2,\ \beta_2,\ \gamma_2$ bound a pair of pants in $Y$ as shown in Figure \ref{figure-s12-2}.  Then it follows from
\eqref{eq counting in S12 pairs} that there are at most $$
\frac{\ell(\gamma_2)}{\cD(\ell(\gamma_2),2L,0)}$$
such pairs of  $(\alpha_2,\beta_2)$'s. This implies that
\[\# \pi^{-1}(\gamma_1,\gamma_2,\alpha_1,\beta_1)\leq 
\frac{200\ell(\gamma_2)}{\cD(\ell(\gamma_2),2L,0)}.\]
Together with \eqref{count-s2}, one may complete the proof.
\end{proof}

\begin{lemma}\label{lemma123}
   For compact hyperbolic surface $X\in\M_g$ and $L>1$,    \begin{equation}\label{equation123}
    \begin{aligned}
        C_{1,2}^3(X,L)
        &\prec \sum_{(\gamma_1,\gamma_2,\alpha_1,\beta_1)\in\mathcal{G}_{1,2}^2(X)} 1_{D_{1,2}^3(L)}(\ell(\gamma_1),\ell(\gamma_2),\ell(\alpha_1),\ell(\beta_1)) \\
        &\cdot\frac{\ell(\gamma_2)}{\cD(\ell(\gamma_2),2L,0)},
    \end{aligned}
\end{equation}
where $\mathcal{G}_{1,2}^2(X)$ is defined in Lemma \ref{lemma122} and the domain $D_{1,2}^3(L)$ is defined as 
$$D_{1,2}^3(L)=\left\{(x_1,x_2,y_1,y_2)\in\mathbb{R}^4_{\geq 0};\ \begin{matrix}
    10\log L\leq x_1\leq L,\ 2x_1+x_2\leq 4L\\
    y_1,\ y_2\leq L
\end{matrix}\right\}.$$
\end{lemma}

\begin{proof}
 For any $(\Gamma_1,\Gamma_2)\in\mathcal{C}_{1,2}^3(X,L)$, WLOG, one may assume that $Y={S(\Gamma_1,\Gamma_2)}$, $\partial Y=\{\gamma_1,\gamma_2\}$, 
  both $\Gamma_1$ and $\Gamma_2$ contain $\gamma_1$ and do not contain $\gamma_2$. Assume that $$\Gamma_1=(\gamma_1,\alpha_1,\beta_1)\text{ and }\Gamma_2=(\gamma_1,\alpha_2,\beta_2).$$ 
  In this situation, we warn here that \emph{$\ell(\gamma_2)$ may exceed $L$}. Since $P(\Gamma_1)\cup P(\Gamma_2)$ fills $Y$, there is a connected component $C$ of $Y \setminus P(\Gamma_1)\cup P(\Gamma_2)$ such that $C$ is topologically a cylinder and $\gamma_2$ is a  connected component of  $\partial C$. The other connected component of $\partial C$, denoted by $\eta$, is a closed piecewise smooth geodesic loop, freely homotopical to $\gamma_2$. All geodesic arcs in $\eta$ are different parts of 
 arcs in $\alpha_1,\beta_1,\alpha_2,\beta_2$ as shown in Figure \ref{figure-s12-3}, then \begin{equation}\label{eq cylinder gamma2}
 \ell(\gamma_2)\leq \ell(\eta)\leq \ell(\alpha_1)+\ell(\beta_1)+\ell(\alpha_2)+\ell(\beta_2).
  \end{equation}
\begin{figure}[b]
    \centering
    \includegraphics[width=2.5 in]{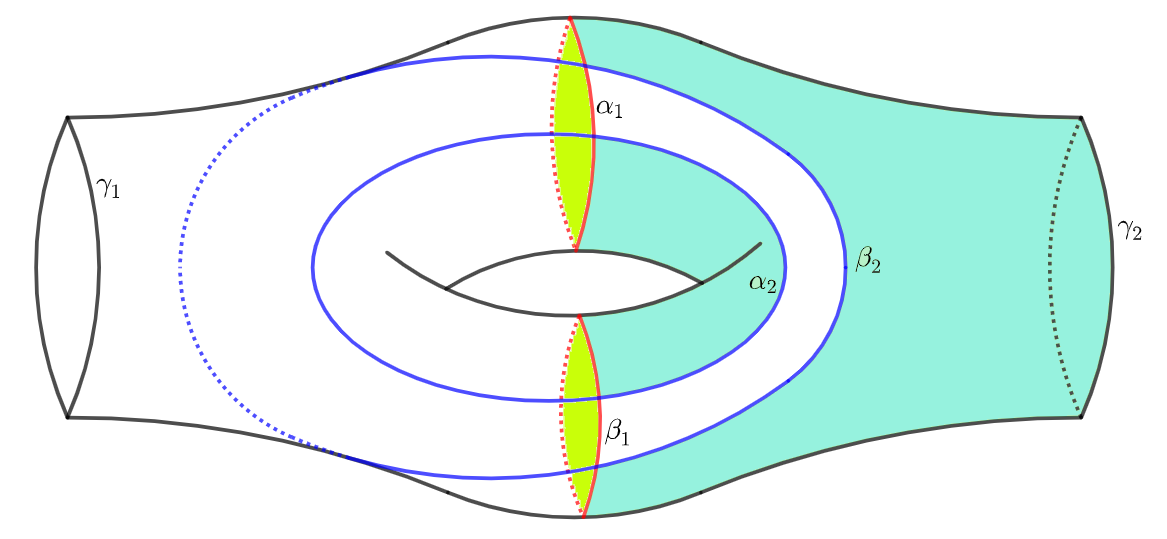}
    \caption{ $\Gamma_1=\{\gamma_1,\alpha_1,\beta_1\}$ and $\Gamma_2=\{\gamma_1,\alpha_2,\beta_2\}$ in $Y=S(\Gamma_1,\Gamma_2)\cong S_{1,2}$ with $\partial Y=\{\gamma_1,\gamma_2\}$}
    \label{figure-s12-3}
\end{figure}
Since $\Gamma_1,\Gamma_2\in \Nset(X,L)$,  we have \begin{equation}\label{eqs gamma2}
 \left\{
 \begin{aligned}
\ell(\gamma_1)+\ell(\alpha_1)+&\ell(\beta_1)\leq 2 L\\
\ell(\gamma_1)+\ell(\alpha_2)+&\ell(\beta_2)\leq 2 L\\
\ell(\gamma_1)\geq& 10\log L\\ 
 \end{aligned}\right..
 \end{equation}
    It follows from \eqref{eq cylinder gamma2}  and \eqref{eqs gamma2} that  \begin{equation}
\ell(\gamma_2)\leq 4L-2\ell(\gamma_1)\leq 4L-20\log L.
    \end{equation}
Then one may check that $(\gamma_1,\gamma_2,\alpha_1,\beta_1)\in\mathcal{G}_{1,2}^2(X)$ and \begin{equation}\label{condition s123}
\ell(\gamma_1),\ell(\alpha_1),\ell(\beta_1)\in[10\log L,L],\ 2\ell(\gamma_1)+\ell(\gamma_2)\leq 4L.
\end{equation} 
Consider the map $$
\pi:(\Gamma_1,\Gamma_2)\mapsto (\gamma_1,\gamma_2,\alpha_1,\beta_1).
$$
Then we have  \begin{equation}\label{count-s3}
\begin{aligned}
C_{1,2}^3(X,L)&\leq\sum\limits_{(\gamma_1,\gamma_2,\alpha_1,\beta_1)\in\mathcal{G}_{1,2}^2(X)}1_{D_{1,2}^3(L)}(\ell(\gamma_1),\ell(\gamma_2),\ell(\alpha_1),\ell(\beta_1))\\
    &\cdot\#\pi^{-1}(\gamma_1,\gamma_2,\alpha_1,\beta_1).
 \end{aligned}
 \end{equation}
For any fixed $(\gamma_1,\gamma_2,\alpha_1,\beta_1)$ and $(\Gamma_1,\Gamma_2)\in \mathcal{C}_{1,2}^3(X,L)$ with 
$$\pi(\Gamma_1,\Gamma_2)=(\gamma_1,\gamma_2,\alpha_1,\beta_1),$$
we have $\Gamma_1$ is fixed, $\gamma_1\in \Gamma_2, \gamma_2\notin \Gamma_2$   and $P(\Gamma_2)\subset Y$. Since $\ell(\alpha_2)+\ell(\beta_2)\leq 2L$,  it follows from \eqref{eq counting in S12 pairs} that there are at most $$ 
\frac{\ell(\gamma_2)}{\cD(\ell(\gamma_2),2L,0)}$$
such pairs of  $(\alpha_2,\beta_2)$'s. This implies that
\[\#\pi^{-1}(\gamma_1,\gamma_2,\alpha_1,\beta_1)\leq \frac{200\ell(\gamma_2)}{\cD(\ell(\gamma_2),2L,0)}.\]
Together with \eqref{count-s3}, one may complete the proof.
\end{proof}

Now we are ready to prove Proposition \ref{c12}.

\begin{proof}[Proof of Proposition \ref{c12}]

Following  Lemma \ref{lemma121}, Lemma \ref{lemma122} and Lemma \ref{lemma123}, for compact hyperbolic surface $X$ and $L>1$ we have 
 \begin{equation}\label{equ12main}
 \begin{aligned}
&\E\left[C_{1,2}(X,L)\right] =\sum\limits_{i=1}^3 \E\left[C_{1,2}^i(X,L)\right]\\
\prec & \E\Bigg[
\sum_{(\gamma_1,\gamma_2,\gamma_3)\in\mathcal{G}_{1,2}^1(X)} 1_{[0,L]^3}(\ell(\gamma_1),\ell(\gamma_2),\ell(\gamma_3))\\
\cdot &\left(\frac{\ell(\gamma_1)}{\cR(\ell(\gamma_1),\ell(\gamma_2),L)}+\frac{\ell(\gamma_1)}{\cD(\ell(\gamma_1),2L,0)}\right)\\
+&\sum_{(\gamma_1,\gamma_2,\alpha_1,\beta_1)\in\mathcal{G}_{1,2}^2(X)}\bigg( 1_{[0,L]^4}(\ell(\gamma_1),\ell(\gamma_2),\ell(\alpha_1),\ell(\alpha_2))
\cdot \frac{\ell(\gamma_2)}{\cD(\ell(\gamma_2),2L,0)}\\
+&1_{D_{1,2}^3(X)}(\ell(\gamma_1),\ell(\gamma_2),\ell(\alpha_1),\ell(\alpha_2))\cdot\frac{\ell(\gamma_2)}{\cD(\ell(\gamma_2),2L,0)}\bigg)\Bigg],
\end{aligned}
\end{equation} 
where $\mathcal{G}_{1,2}^1(X)$ and $\mathcal{G}_{1,2}^2(X)$ are defined in Lemma \ref{lemma121} and \ref{lemma122}.

Now we try to estimate the right hand side of \eqref{equ12main}. For the first term, assume 
$(\gamma_1,\gamma_2,\gamma_3)\in\mathcal{G}_{1,2}^1(X)$  and 
$Y_0\cong S_{1,2}$ is the subsurface in $X$ which is cutted off by $\gamma_1,\ \gamma_2$. Then  $X\setminus Y_0$ can be of type $$S_{g-2,2}\text{ or }S_{g_1,1}\cup S_{g_2,1}$$ with $g_1+g_2=g-1$. 
Hence we consider set $\mathcal{H}_{1,2}^1$ consists of all simple closed multi-curves with following types:
\begin{enumerate}
    \item $(\gamma_1,\gamma_2,\gamma_3)$ satisfying
    $$X\setminus\{\gamma_1,\gamma_2,\gamma_3\}\cong S_{0,3}\cup S_{1,1}\cup S_{g-2,2}$$
    where $\gamma_1,\ \gamma_2$ are the boundary curves of the part $S_{g-2,2}$, $\gamma_3$ is the boundary curve of the part $S_{1,1}$;
    \item $(\gamma_1,\gamma_2,\gamma_3)$ satisfying
    $$X\setminus\{\gamma_1,\gamma_2,\gamma_3\}\cong S_{0,3}\cup S_{1,1}\cup S_{g_1,1}\cup S_{g_2,1},$$
    where $\gamma_i$ is the boundary curve of the part $S_{g_i,1}\ (i=1,2)$, $\gamma_3$ is the boundary curve of the part $S_{1,1}$.
\end{enumerate}
For $L>1$, applying Mirzakhani's integration formula, i.e. Theorem \ref{thm Mirz int formula}  to all simple closed multi-curves in $\mathcal{H}_{1,2}^1$ and the function 
$$1_{[0,L]^3}(x_1,x_2,x_3)
\cdot \left(\frac{x_1}{\cR(x_1,x_2,L)}+\frac{x_1}{\cD(x_1,2L,0)}\right),$$
together with Theorem \ref{mirz07}, Theorem \ref{thm Vgn(x) small x},  and Theorem \ref{thm estimation R,D},  we have
       \begin{equation}\label{expectation121}
       \begin{aligned}
          &\E\Bigg[    \sum_{(\gamma_1,\gamma_2,\gamma_3)\in\mathcal{G}_{1,2}^1(X)} 1_{[0,L]^3}(\ell(\gamma_1),\ell(\gamma_2),\ell(\gamma_3))\\
\cdot &\left(\frac{\ell(\gamma_1)}{\cR(\ell(\gamma_1),\ell(\gamma_2),L)}+\frac{\ell(\gamma_1)}{\cD(\ell(\gamma_1),2L,0)}\right)\Bigg]\\  \prec &\frac{1}{V_g}
\int_{[0,L]^3}
\Big[ (1+x_1)\left(1+e^{\frac{L-x_1-x_2}{2}}\right)+(1+x_1)\left(1+e^{\frac{2L-x_1}{2}}\right)   \Big]V_{1,1}(x_3)
\\
\cdot& \left( 
V_{g-2,2}(x_1,x_2)+\sum_{(g_1,g_2)}V_{g_1,1}(x_1)V_{g_2,1}(x_2)
\right)x_1x_2x_3\cdot dx_1dx_2dx_3\\
\prec &\frac{1}{V_g}\int_{[0,L]^3}\Big[ (1+x_1)\left(1+e^{\frac{L-x_1-x_2}{2}}\right)+(1+x_1)\left(1+e^{\frac{2L-x_1}{2}}\right)   \Big]\\
\cdot& (1+x_3^2)\left(V_{g-2,2}+\sum_{(g_1,g_2)}V_{g_1,1}V_{g_2,1} \right)\sinh\frac{x_1}{2}\sinh\frac{x_2}{2}\cdot x_3\cdot dx_1dx_2dx_3\\
\prec&\frac{V_{g-2,2}+\sum\limits_{(g_1,g_2)}V_{g_1,1}V_{g_2,1}}{V_g}\cdot \left(L^5e^L+L^7e^{\frac L2}+   L^6 e^{\frac{3}{2}L}    \right),
       \end{aligned}
   \end{equation}

For the second term, assume 
$(\gamma_1,\gamma_2,\alpha_1,\beta_1)\in\mathcal{G}_{1,2}^2(X)$  and 
$Y_0\cong S_{1,2}$ is the subsurface in $X$ which is cutted off by $\gamma_1,\ \gamma_2$. Then  $X\setminus Y_0$ can be of type $$S_{g-2,2}\text{ or }S_{g_1,1}\cup S_{g_2,1}$$ with $g_1+g_2=g-1$. Hence we consider the set $\mathcal{H}_{1,2}^2$ consists of all simple closed curves with following types:
\begin{enumerate}
    \item $(\gamma_1,\gamma_2,\alpha_1,\beta_1)$ satisfying
    $$X\setminus\{\gamma_1,\gamma_2,\alpha_1,\beta_1\}\cong S_{0,3}\cup S_{0,3}\cup S_{g-2,2}$$
    where $\gamma_1,\ \gamma_2$ are boundary curves of the part $S_{g-2,2}$, $\alpha_1,\ \beta_1$ are common boundary curves of two different $S_{0,3}$;
    \item $(\gamma_1,\gamma_2,\alpha_1,\beta_1)$ satisfying
    $$X\setminus\{\gamma_1,\gamma_2,\alpha_1,\beta_1\}\cong S_{0,3}\cup S_{0,3}\cup S_{g_1,1}\cup S_{g_2,1},$$
    where $\gamma_i$ is the boundary curve of the part $S_{g_i,1}\ (i=1,2)$, $\alpha_1,\ \beta_1$ are common boundary curves of two different $S_{0,3}$.
\end{enumerate}

For $L>1$, applying Mirzakhani's integration formula, i.e. Theorem \ref{thm Mirz int formula} to all simple closed multi-curves contained in $\mathcal{H}_{1,2}^2$ and the function
$$1_{[0,L]^4}(x_1,x_2,y_1,y_2)\cdot\frac{x_2}{\cD(x_2,2L,0)},$$
together with Theorem \ref{mirz07},  Theorem \ref{thm Vgn(x) small x},  and Theorem \ref{thm estimation R,D}, we have  \begin{equation}
   \label{expectation122} \begin{aligned}
\nonumber&\E\Bigg[\sum_{(\gamma_1,\gamma_2,\alpha_1,\beta_1)\in\mathcal{G}_{1,2}^2(X)} 1_{[0,L]^4}(\ell(\gamma_1),\ell(\gamma_2),\ell(\alpha_1),\ell(\alpha_2))\cdot\frac{\ell(\gamma_2)}{\cD(\ell(\gamma_2),2L,0)}\Bigg]\\
\nonumber\prec&\frac{1}{V_g}\int_{[0,L]^4}(1+x_2)\left(1+e^{\frac{2L-x_2}{2}}\right)V_{0,3}(x_1,y_1,y_2)V_{0,3}(x_2,y_1,y_2)\\
\cdot&\left(V_{g-2,2}(x_1,x_2)+\sum_{(g_1,g_2)}V_{g_1,1}(x_1)V_{g_2,1}(x_2)\right)x_1x_2y_1y_2\cdot dx_1dx_2dy_1dy_2\\
\nonumber\prec&\frac{1}{V_g}\int_{[0,L]^4} (1+x_2)\left(1+e^{\frac{2L-x_2}{2}}\right)\left(V_{g-2,2}+\sum_{(g_1,g_2)}V_{g_1,1}V_{g_2,1}\right)\\
\nonumber\cdot&\sinh\frac{x_1}{2}\sinh\frac{x_2}{2}\cdot y_1y_2\cdot dx_1dx_2dy_1dy_2\\
\nonumber\prec&\frac{V_{g-2,2}+\sum\limits_{(g_1,g_2)}V_{g_1,1}V_{g_2,1}}{V_g}\cdot \left(L^5e^L+L^6e^{\frac{3}{2}L}\right).
    \end{aligned}
\end{equation}

Consider the remaining term on the right side of (\ref{equ12main}). Fot $L>1$, applying Mirzakhani's integration formula, i.e. Theorem \ref{thm Mirz int formula} to all simple closed multi-curves contained in $\mathcal{H}_{1,2}^2$ and function
 $$1_{D_{1,2}^3(L)}(x_1,x_2,y_1,y_2)\cdot \frac{x_2}{\cD(x_2,2L,0)},$$
together with Theorem \ref{mirz07},  Theorem \ref{thm Vgn(x) small x},  and Theorem \ref{thm estimation R,D}, we have
 \begin{equation}
   \label{expectation123}\begin{aligned}
&\E\left[\sum_{\substack{(\gamma_1,\gamma_2,\alpha_1,\beta_1)\\
\in\mathcal{G}_{1,2}^2(X)}} 1_{D_{1,2}^3(L)}(\ell(\gamma_1),\ell(\gamma_2),\ell(\alpha_1),\ell(\beta_1))\cdot\frac{200\ell(\gamma_2)}{\cD(\ell(\gamma_2),2L,0)}
        \right]\\
      \prec&\frac{1}{V_g}\int_{D_{1,2}^3(L)}(1+x_2)\left(1+e^{\frac{2L-x_2}{2}}\right)V_{0,3}(x_1,y_1,y_2)V_{0,3}(x_2,y_1,y_2)\\
        \cdot &\left(V_{g-2,2}(x_1,x_2)+\sum\limits_{(g_1,g_2)}V_{g_1,1}(x_1)V_{g_2,1}(x_2)\right)x_1x_2y_1y_2\cdot dx_1dx_2dy_1dy_2\\
     \prec&\frac{1}{V_g}\int_{D_{1,2}^3(L)}(1+x_2)\left(1+e^{\frac{2L-x_2}{2}}\right)\\
    \cdot&\left(V_{g-2,2}+\sum_{(g_1,g_2)}V_{g_1,1}V_{g_2,1}\right)\sinh\frac{x_2}{2}\sinh\frac{x_2}{2}\cdot y_1y_2\cdot dx_1dx_2dy_1dy_2\\
     \prec&\frac{V_{g-2,2}+\sum_{(g_1,g_2)}V_{g_1,1}V_{g_2,1}}{V_g}\!\cdot 
\! L^5\! \cdot\! \int_{\substack{
            10\log L\leq x\leq L\\
            0\leq x_2\leq 4L-2x_1
       }}\!
        \left(1+e^{\frac{2L-x_2}{2}}\right)\!e^{\frac{x_1+x_2}{2}}dx_1dx_2\\
        \prec&\frac{V_{g-2,2}+\sum\limits_{(g_1,g_2)}V_{g_1,1}V_{g_2,1}}{V_g}\cdot 
 L^5\cdot \left(e^{2L-5\log L}+Le^{\frac{3}{2}L}\right).
   \end{aligned}\end{equation}

By Theorem \ref{thm Vgn/Vgn+1} and Theorem \ref{thm sum-prod-V}, we have\begin{equation}\label{cond s12 vg-2/vg}
    \frac{  V_{g-2,2}+\sum\limits_{(g_1,g_2)}V_{g_1,1}V_{g_2,1}  }{V_g}\prec\frac{1}{V_g}\left(V_{g-2,2}+\frac{W_{2g-4}}{g}\right)\prec \frac{1}{g^2}.
\end{equation}
Then combining \eqref{equ12main}-\eqref{cond s12 vg-2/vg}
 we obtain $$
\begin{aligned}
    &\E\left[ C_{1,2}(X,L)\right]\prec\frac{  V_{g-2,2}+\sum\limits_{(g_1,g_2)}V_{g_1,1}V_{g_2,1}  }{V_g}
    \cdot \left(L^5e^L+L^7e^\frac{L}{2}+L^6e^{\frac{3L}{2}}+e^{2L}\right)\\
    \prec&\frac{V_{g-2,2}+\frac{W_{2g-4}}{g}}{V_g}
    \cdot \left(L^5e^L+L^7e^\frac{L}{2}+L^6e^{\frac{3L}{2}}+e^{2L}\right)\prec\frac{  e^{2L}  }{g^2}
\end{aligned}
$$
as desired.
\end{proof}

\subsubsection{Bounds for $\mathbb{E}_{\textnormal{WP}}^g\left[C_{0,4}(X,L)\right]$}\label{sec-4.3.3}
Our aim for  $\E\left[C_{0,4}(X,L)\right]$ is as follows. The proof is similar to the one in bounding $\E\left[C_{1,2}(X,L)\right]$.
\begin{proposition}\label{c04}
     For $L>1$ and large $g$,  
    $$
\E\left[C_{0,4}(X,L)\right]\prec \frac{Le^{2L}}{g^2}.
    $$
\end{proposition}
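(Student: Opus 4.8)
The plan is to follow the proof of Proposition \ref{c12} almost verbatim, replacing the surface $S_{1,2}$ by $S_{0,4}$. First, for $(\Gamma_1,\Gamma_2)\in\mathcal{C}_{0,4}(X,L)$, Lemma \ref{l-cor} guarantees that $Y=S(\Gamma_1,\Gamma_2)\simeq S_{0,4}$ lies in $\mathrm{Sub}_L(X)$ and that $(\alpha_1,\alpha_2)$ fills $Y$. I would begin by classifying the finitely many relative positions of $\Gamma_1$ and $\Gamma_2$ inside $Y$. Since two distinct pairs of pants whose interiors meet and which fill $S_{0,4}$ cannot share two boundary curves of $Y$ (that would force the two interior curves to be isotopic, hence equal, putting the pair in $\mathcal{A}$ instead of $\mathcal{C}$), the representative case is exactly the one in Figure \ref{fig:classifyC}: $\Gamma_1$ and $\Gamma_2$ share a single simple closed geodesic $\delta$ which is a boundary component of $Y$, so that $\partial Y=\gamma_{11}\cup\delta\cup\gamma_{21}\cup\beta$ with $\gamma_{11}\in\Gamma_1$, $\gamma_{21}\in\Gamma_2$, and $\beta$ the fourth boundary produced by the $S(\cdot,\cdot)$ construction, while the interior curves $\gamma_{13},\gamma_{23}$ of $\Gamma_1,\Gamma_2$ cross. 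Up to the bounded symmetry of relabeling the three boundary components of each pair of pants (absorbed into a universal constant, as in the remark following Lemma \ref{lemma121}), the remaining positions are handled identically.

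Second, for this configuration I would introduce the counting map
$$\pi:(\Gamma_1,\Gamma_2)\longmapsto(\gamma_{11},\delta,\gamma_{21},\beta,\gamma_{13}),$$
where $\gamma_{13}$ is the interior geodesic of $\Gamma_1$ separating $\{\gamma_{11},\delta\}$ from $\{\gamma_{21},\beta\}$; this records all of $\partial Y$ together with $\Gamma_1$. Over a fixed fiber, $\Gamma_2$ is determined by its interior curve $\gamma_{23}$, a simple closed geodesic of length $\leq L$ in $Y\simeq S_{0,4}$ bounding a pair of pants with the boundary pair $\{\gamma_{21},\delta\}$ (equivalently $\{\gamma_{11},\beta\}$). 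The McShane-Mirzakhani bound \eqref{eq counting in S04} then controls the number of such $\gamma_{23}$ by the minimum over the four boundary pairs; the crucial point is to select the pair $\{\gamma_{11},\beta\}$ containing the long boundary $\beta$, so that after invoking Lemma \ref{thm estimation R,D} the factor $\frac{\ell(\gamma_{11})}{\mathcal R(\ell(\gamma_{11}),\ell(\beta),L)}\prec(1+\ell(\gamma_{11}))\bigl(1+e^{L/2}e^{-(\ell(\gamma_{11})+\ell(\beta))/2}\bigr)$ carries a decay $e^{-\ell(\beta)/2}$ that will cancel the growth $\sinh(\ell(\beta)/2)$ coming from the volume polynomial. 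Thus $\#\pi^{-1}(\gamma_{11},\delta,\gamma_{21},\beta,\gamma_{13})$ is bounded by a universal constant times this ratio, and summing over admissible anchors produces an upper bound for $C_{0,4}(X,L)$ of the same shape as \eqref{equation121}--\eqref{equation123}.

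Third, I would take expectations and apply Mirzakhani's integration formula Theorem \ref{thm Mirz int formula} to the ordered simple closed multi-curve $\gamma_{11}\cup\delta\cup\gamma_{21}\cup\beta\cup\gamma_{13}$, which cuts $X$ into the two pairs of pants forming $Y$ and the complement of $Y$. The complement is $S_{g-3,4}$ in the connected case (with the disconnected types summed via Theorem \ref{thm sum-prod-V}), and Theorem \ref{thm Vgn/Vgn+1} gives $\frac{V_{g-3,4}+\sum(\cdots)}{V_g}\prec\frac{1}{g^2}$, the source of the $\frac{1}{g^2}$ factor. Using $V_{0,3}\equiv1$, replacing the remaining volume by its estimate from Theorem \ref{thm Vgn(x) small x}, and bounding the counting ratio by Lemma \ref{thm estimation R,D}, the problem reduces to integrating a product of $\sinh(\cdot/2)$ factors against the $\mathcal R$-estimate over the region where $\ell(\gamma_{11}),\ell(\delta),\ell(\gamma_{21}),\ell(\gamma_{13})\leq L$ and $\ell(\beta)\leq 4L$ (the last bound coming from \eqref{ell partial S(X_1,X_2)}). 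Evaluating this integral exactly as in \eqref{expectation121}--\eqref{cond s12 vg-2/vg}, the cancellations on $\gamma_{11}$ and $\beta$ keep every exponential factor at or below $e^{2L}$, the extra polynomial weight from the additional boundary length integrations of $S_{0,4}$ (compared with $S_{1,2}$) contributing the single factor $L$, so that the integral is $\prec Le^{2L}$. Combining with the volume ratio yields $\E[C_{0,4}(X,L)]\prec\frac{Le^{2L}}{g^2}$.

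The main obstacle I anticipate is the topological bookkeeping: verifying that the classification of relative positions of $(\Gamma_1,\Gamma_2)$ in $S_{0,4}$ is indeed exhausted (up to constants) by the configuration above, and in each case choosing the anchor and the pair of boundary curves in \eqref{eq counting in S04} so that the $\mathcal R$-decay tames the one long boundary $\beta$; if instead one retains the $\beta$-integration uncancelled, the integral overshoots with an $e^{2L}$ arising from $\int_0^{4L}\sinh(\ell(\beta)/2)\,d\ell(\beta)$ on top of the other factors. A secondary subtlety is confirming which complement topologies can occur and checking that the disconnected cases are dominated by the connected $S_{g-3,4}$ contribution through the sum-of-products volume bound; once this is settled the integration is routine.
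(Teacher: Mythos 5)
Your plan correctly anticipates the overall architecture (classify relative positions in $S_{0,4}$, anchor a counting map, control the one long boundary via \eqref{eq counting in S04} and Lemma \ref{thm estimation R,D}, then integrate with the volume ratio $\prec 1/g^2$), and your treatment of the shared-one-boundary configuration, including the choice of the boundary pair containing the long curve $\beta$ so that the $\mathcal R$-decay $e^{-\ell(\beta)/2}$ cancels $\sinh(\ell(\beta)/2)$, matches the paper's Lemma \ref{lemma0402} and estimate \eqref{expectation0402}. But your classification contains a genuine error: the claim that two distinct filling pairs of pants in $S_{0,4}$ cannot share two boundary curves of $Y$ is false. Simple closed curves in $S_{0,4}$ separating $\{\gamma_1,\gamma_2\}$ from $\{\gamma_3,\gamma_4\}$ fall into infinitely many isotopy classes (Dehn twisting about a curve realizing the complementary partition produces new ones), and two non-isotopic such curves $\eta,\xi$ necessarily intersect; then $P(\Gamma_1)$ and $P(\Gamma_2)$ both contain $\gamma_1,\gamma_2$ yet are distinct and overlapping, so the pair lies in $\mathcal{C}_{0,4}$, not $\mathcal{A}$. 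This is exactly the paper's Lemma \ref{lemma0403} (Figure \ref{figure-s04-3}), and it is not "handled identically" to your representative case: since $\gamma_3$ and $\gamma_4$ lie in neither $\Gamma_i$, \emph{both} may have length exceeding $L$, and one needs the cylinder argument $\ell(\gamma_3)+\ell(\gamma_4)\leq\ell(\eta)+\ell(\xi)$ combined with the lower bounds $\ell(\gamma_1),\ell(\gamma_2)\geq 10\log L$ to confine the integration region as in \eqref{condition s0403}; without this, the integral over two unbounded boundary lengths overshoots the target $Le^{2L}$. Your scheme also omits the disjoint-boundary configuration where $\Gamma_1\cup\Gamma_2$ contains all four boundary geodesics of $Y$ (the paper's Lemma \ref{lemma0401}), which is easier but is not the shared-$\delta$ picture.

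A second omission: you implicitly assume the four boundary curves of $Y=S(\Gamma_1,\Gamma_2)$ are distinct simple closed geodesics in $X$ cutting off an embedded $S_{0,4}$, but two of them may coincide as a single closed geodesic of $X$, in which case the completion satisfies $\overline{S(\Gamma_1,\Gamma_2)}\simeq S_{1,2}$ and Mirzakhani's integration formula must be applied to a different multi-curve with different complementary topologies. The paper isolates this as $\mathcal{C}_{0,4}^1(X,L)$ and disposes of it by reduction to the $S_{1,2}$ lemmas (Proposition \ref{E s041}), reserving the $S_{0,4}$ analysis for $\mathcal{C}_{0,4}^0(X,L)$ (Proposition \ref{E s040}). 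As written, your argument covers only one of the three genuine sub-cases of $\mathcal{C}_{0,4}^0$ and none of $\mathcal{C}_{0,4}^1$, so the proof is incomplete even though the machinery you invoke is the right one.
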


When $S(\Gamma_1,\Gamma_2)\cong S_{0,4}$, two boundary geodesics of  $S(\Gamma_1,\Gamma_2)$ may be the same closed geodesic in $X$, in this case, the completion $\overline{S(\Gamma_1,\Gamma_2)}\cong S_{1,2}$; otherwise $\overline{S(\Gamma_1,\Gamma_2)}\cong S(\Gamma_1,\Gamma_2)\cong S_{0,4}$. Moreover, each of $\Gamma_1$ and $\Gamma_2$ has exactly two closed geodesics contained in the boundary of  $S(\Gamma_1,\Gamma_2)$. Now we define
\begin{align*}
    &\MC_{0,4}^{0}(X,L):=\left\{(\Gamma_1,\Gamma_2)\in \MC_{0,4}(X,L), \ \overline{S(\Gamma_1,\Gamma_2)}\cong S_{0,4}\right\},\\
    &\MC_{0,4}^{1}(X,L):=\left\{(\Gamma_1,\Gamma_2)\in \MC_{0,4}(X,L), \ \overline{S(\Gamma_1,\Gamma_2)}\cong S_{1,2}\right\},
\end{align*}
and set $$
C_{0,4}^0(X,L)=\#\MC_{0,4}^0(X,L),\ C_{0,4}^1(X,L)=\#\MC_{0,4}^1(X,L).
$$
For $(\Gamma_1,\Gamma_2)$ in $\MC_{0,4}^1(X,L),$ view $S(\Gamma_1,\Gamma_2)$ as the result surface of cutting  $$\overline{S(\Gamma_1,\Gamma_2)}\cong S_{1,2}$$
 along a non-separating simple closed geodesic. With similar arguments as in  Lemma  \ref{lemma121}, Lemma \ref{lemma122}, Lemma \ref{lemma123} and Proposition \ref{c12}, one may deduce that
 \begin{proposition}\label{E s041}
      For $L>1$ and large $g$,
      $$
      \E\left[C_{0,4}^1(X,L) \right]\prec \frac{e^{2L}}{g^2}.
      $$
 \end{proposition}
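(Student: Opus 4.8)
The plan is to reduce the estimate to the bound already proved for $\E\left[C_{1,2}(X,L)\right]$ in Proposition \ref{c12}, exploiting the defining property that for every $(\Gamma_1,\Gamma_2)\in\MC_{0,4}^1(X,L)$ the completion $Z:=\overline{S(\Gamma_1,\Gamma_2)}$ is homeomorphic to $S_{1,2}$. \textbf{First} I would record the topological picture. Since $S(\Gamma_1,\Gamma_2)\simeq S_{0,4}$ while $\overline{S(\Gamma_1,\Gamma_2)}\simeq S_{1,2}$, exactly two of the four boundary geodesics of $S_{0,4}$ are identified to a single non-separating simple closed geodesic $\delta$ of $X$, and cutting $Z\simeq S_{1,2}$ along $\delta$ recovers $S(\Gamma_1,\Gamma_2)$. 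Hence $\partial Z$ consists of precisely two distinct simple closed geodesics of $X$, and the complement $X\setminus\overline{Z}$ is of type either $S_{g-2,2}$ or $S_{g_1,1}\cup S_{g_2,1}$ with $g_1+g_2=g-1$ --- the same topological types that occur in the $C_{1,2}$ analysis.

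\textbf{Second} I would classify the relative position of the ordered pair $(\Gamma_1,\Gamma_2)$ inside $Z$ according to how each $\Gamma_i$ meets the two components of $\partial Z$, producing the same three cases as in Lemma \ref{lemma121}, Lemma \ref{lemma122} and Lemma \ref{lemma123}: one of the $\Gamma_i$ contains all of $\partial Z$; the two curves $\Gamma_1,\Gamma_2$ each contain a distinct component of $\partial Z$; or both contain the same component of $\partial Z$. In each case, counting the admissible $\Gamma_2$ once $\Gamma_1$ (and the relevant auxiliary simple closed curves) is fixed is carried out \emph{on the completion} $Z\simeq S_{1,2}$ using the McShane-Mirzakhani counting bounds \eqref{eq counting in S12 single} and \eqref{eq counting in S12 pairs}. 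This yields the same $\mathcal R$- and $\mathcal D$-weighted sums appearing on the right-hand sides of \eqref{equation121}, \eqref{equation122} and \eqref{equation123}.

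\textbf{Third} I would feed these combinatorial bounds into Mirzakhani's integration formula (Theorem \ref{thm Mirz int formula}), bound the volume factors using Theorem \ref{mirz07} and Theorem \ref{thm Vgn(x) small x}, and estimate the $\mathcal R,\mathcal D$ quotients via Lemma \ref{thm estimation R,D}, following the computations \eqref{expectation121}--\eqref{expectation123} in the proof of Proposition \ref{c12}. Because $X\setminus\overline{Z}$ has the same topology as before, the volume ratio obeys the same estimate \eqref{cond s12 vg-2/vg}, namely $\frac{V_{g-2,2}+\sum_{(g_1,g_2)}V_{g_1,1}V_{g_2,1}}{V_g}\prec g^{-2}$, while the $L$-integrals contribute the same polynomial-times-exponential factor dominated by $e^{2L}$. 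Combining these gives $\E\left[C_{0,4}^1(X,L)\right]\prec e^{2L}/g^2$.

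The \textbf{main point requiring care} is the legitimacy of the reduction in the first two steps: one must check that passing to the completion $Z$ and counting $\Gamma_2$ by the McShane-Mirzakhani bounds on $S_{1,2}$ is harmless. The potential danger is that the cutting geodesic $\delta$ introduces an additional length variable into the integration that could worsen the $g^{-2}$ volume ratio. This does not happen: $\delta$ is a single simple closed geodesic of $X$ lying in the interior of $Z$, it contributes only a bounded interior factor of the $V_{1,1}$-type already present in \eqref{expectation121}, and the genus carried outside $Z$ remains $g-1$. Thus the integration matches that of the $S_{1,2}$ case exactly and no loss is incurred, which is why the bound coincides with that of Proposition \ref{c12}.
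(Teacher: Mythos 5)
Your proposal is correct and takes essentially the same route as the paper: the paper likewise disposes of $\MC_{0,4}^{1}(X,L)$ by viewing $S(\Gamma_1,\Gamma_2)$ as the result of cutting the completion $\overline{S(\Gamma_1,\Gamma_2)}\simeq S_{1,2}$ along a non-separating simple closed geodesic, so that the counting estimates of Lemma \ref{lemma121}, Lemma \ref{lemma122} and Lemma \ref{lemma123} apply verbatim and the integration in the proof of Proposition \ref{c12} yields the bound $\prec e^{2L}/g^2$. Your added check that the cutting geodesic $\delta$ introduces no new integration variable (it is absorbed into the interior volume factors and never enters the multicurve fed to Mirzakhani's integration formula) is a correct elaboration of a point the paper leaves implicit.
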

  
Now we consider $\MC_{0,4}^0(X,L)$. Again we need to accurately classify elements in it according to the relative position of $(\Gamma_1,\Gamma_2)$ in $S(\Gamma_1,\Gamma_2)\cong S_{0,4}$, hence $\partial S(\Gamma_1,\Gamma_2)$ consists of four simple closed geodesics. There are three different possible cases:
\begin{enumerate}
    \item $\Gamma_1\cup\Gamma_2 \textit{ contains } \partial S(\Gamma_1,\Gamma_2)$;
    \item $\Gamma_1\cup\Gamma_2 \textit{ contains exactly } 3 \textit{ boundary geodesics of } S(\Gamma_1,\Gamma_2)$;
    \item $\Gamma_1\cup\Gamma_2 \textit{ contains exactly } 2 \textit{ boundary geodesics of } S(\Gamma_1,\Gamma_2)$.
\end{enumerate}
For $i=1,2,3$, denote by
$$\mathcal{C}_{0,4}^{0,i}(X,L)=\{(\Gamma_1,\Gamma_2)\in\mathcal{C}_{0,4}^0(X,L);\ (\Gamma_1,\Gamma_2)\text{ satisfies the $i$-th condition above}\}.
$$ and
$$C_{0,4}^{0,i}(X,L)=\#\mathcal{C}_{0,4}^{0,i}(X,L).$$
Also consider the set $\mathcal{G}_{0,4}^{0}(X)$ which consists of all simple closed multi-geodesics   \\$(\gamma_1,\gamma_2,\gamma_3,\gamma_4,\eta)$ satisfying $\gamma_1\cup\gamma_2\cup\gamma_3\cup\gamma_4$ cuts off a subsurface $Y\cong S_{0,4}$  in $X$ and $\eta$ bounds a $S_{0,3}$ in $Y$ along with $\gamma_1,\gamma_2$;

Now we start to estimate $C_{0,4}^{0,i}(X,L)$ for $i=1,2,3$.

\begin{lemma}\label{lemma0401}
   For $X\in \M_g$ and $L>1$, we have  \begin{equation}\label{equation0401}
    \begin{aligned}
   C_{0,4}^{0,1}(X,L)
\prec&\sum_{(\gamma_1,\gamma_2,\gamma_3,\gamma_4,\eta)\in\mathcal{G}_{0,4}^{0}(X)} 1_{D_{0,4}^{1,2}(L)}(\ell(\gamma_1),\ell(\gamma_2),\ell(\gamma_3),\ell(\gamma_4),\ell(\eta))\\
\cdot&\frac{\ell(\gamma_3)}{\cR(\ell(\gamma_3),\ell(\gamma_4),L)},
     \end{aligned}
 \end{equation}
    where the domain $D_{0,4}^{0,1}(L)$ is defined as
    \begin{align*}
    D_{0,4}^{0,1}(L)=\left\{(x_1,x_2,x_3,x_4,y)\in\mathbb{R}^5_{\geq 0};\ \begin{matrix}
        x_1,\ x_2,\ x_3,\ x_4,\ y\leq L;\\
        x_1+x_2,\ x_3+x_4\leq 2L-10\log L
    \end{matrix}\right\}.
    \end{align*}
\end{lemma}

\begin{proof} For any $(\Gamma_1,\Gamma_2)\in\mathcal{C}_{0,4}^{0,1}(X,L)$,
     WLOG, one may assume that $Y=S(\Gamma_1,\Gamma_2)$, $\partial Y=\{\gamma_1,\gamma_2,\gamma_3,\gamma_4\},$ and $\Gamma_1$ contains $\gamma_1,\gamma_2$. Then it follows that $\Gamma_2$ contains $\gamma_3,\gamma_4$. Assume 
     $$\Gamma_1=(\gamma_1,\gamma_2,\eta)\text{ and }\Gamma_2=(\gamma_3,\gamma_4,\xi)\ (\text{see Figure }\ref{figure-s04-1}).$$
     Then $(\gamma_1,\gamma_2,\gamma_3,\gamma_4,\eta)\in\mathcal{G}_{0,4}^{0}(X)$ and  their lengths satisfy \begin{equation}\label{condition s04011}
0\leq \ell(\gamma_1),\ \ell(\gamma_2),\ \ell(\gamma_3),\ \ell(\gamma_4),\ \ell(\eta)\leq L
      \end{equation}
      and \begin{equation}\label{condition s04012}
\ell(\gamma_1)+\ell(\gamma_2),\ \ell(\gamma_3)+\ell(\gamma_4)\leq 2L-10\log L.
        \end{equation}  
     Consider the map $$\pi:(\Gamma_1,\Gamma_2)\mapsto (\gamma_1,\gamma_2,\gamma_3,\gamma_4,\eta).$$ 

\begin{figure}[t]
    \centering
    \includegraphics[width=2.5 in]{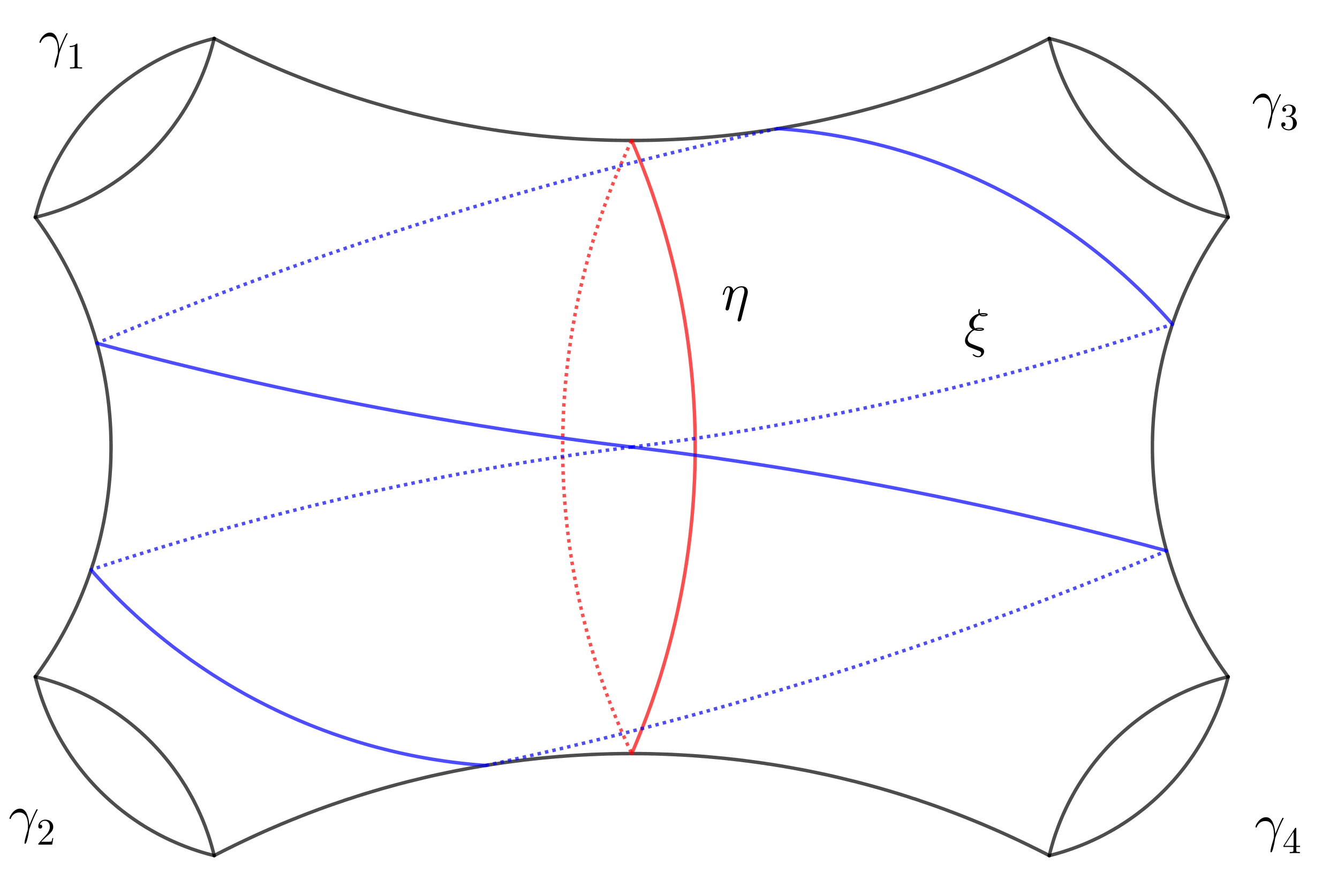}\caption{ $\Gamma_1=\{\gamma_1,\gamma_2,\eta\}$ and $\Gamma_2=\{\gamma_3,\gamma_4,\xi\}$ in $Y=S(\Gamma_1,\Gamma_2)\cong S_{0,4}$ with $\partial Y=\{\gamma_1,\gamma_2,\gamma_3,\gamma_4\}$}
    \label{figure-s04-1}
\end{figure}
\noindent Then we have
\begin{equation}\label{count-04-1}\begin{aligned}
    C_{0,4}^{0,1}(X,L)&\leq\sum\limits_{(\gamma_1,\gamma_2,\gamma_3,\gamma_4,\eta)\in\mathcal{G}_{0,4}^{0}(X)}1_{D_{0,4}^{1,2}(L)}(\ell(\gamma_1),\ell(\gamma_2),\ell(\gamma_3),\ell(\gamma_4),\ell(\eta))\\
    &\cdot\#\pi^{-1}(\gamma_1,\gamma_2,\gamma_3,\gamma_4,\eta).
\end{aligned}
\end{equation}
 For any fixed $(\gamma_1,\gamma_2,\gamma_3,\gamma_4,\eta)$ and $(\Gamma_1,\Gamma_2)\in \mathcal{C}_{0,4}^{0,1}(X,L)$ with $$\pi(\Gamma_1,\Gamma_2)=(\gamma_1,\gamma_2,\gamma_3,\gamma_4,\eta),$$
$\Gamma_1$ is fixed and $\Gamma_2$ is determined by $\xi$. Simple closed geodesic $\xi$ has length $\leq L$, and bounds an $S_{0,3}$ in $Y$ along with $\gamma_3,\ \gamma_4.$ Hence it follows from \eqref{eq counting in S04} that  
\[\#\pi^{-1}(\gamma_1,\gamma_2,\gamma_3,\gamma_4,\eta)\leq  \frac{200\ell(\gamma_3)}{\cR(\ell(\gamma_3),\ell(\gamma_4),L)}.\]
Together with \eqref{condition s04011}, \eqref{condition s04012} and \eqref{count-04-1}, one may complete the proof. 
\end{proof}

\begin{lemma}\label{lemma0402}
   For $X\in \M_g$ and $L>1$, we have \begin{equation}\label{equation0402}
    \begin{aligned}
   C_{0,4}^{0,2}(X,L)
     \prec &\sum_{(\gamma_1,\gamma_2,\gamma_3,\gamma_4,\eta)\in\mathcal{G}_{0,4}^{0,2}(X)}1_{D_{0,4}^{0,2}(L)}(\ell(\gamma_1),\ell(\gamma_2),\ell(\gamma_3),\ell(\gamma_4),\ell(\eta))\\
    \cdot &\frac{\ell(\gamma_2)}{\cR(\ell(\gamma_2),\ell(\gamma_4),L)},
     \end{aligned}
    \end{equation}
     where the domain $D_{0,4}^{0,2}(L)$ is defined as
     \begin{align*}
    D_{0,4}^{0,2}(L)=\left\{(x_1,x_2,x_3,x_4,y)\in\mathbb{R}^5_{\geq 0};\ \begin{matrix}
        10\log L\leq x_1\leq L,\ x_2,\ x_3,\ y\leq L;\\
        2x_1+x_2+x_3+x_4\leq 4L
    \end{matrix}\right\}.
    \end{align*}
\end{lemma}

\begin{proof}
 For any $(\Gamma_1,\Gamma_2)\in\mathcal{C}_{0,4}^{0,2}(X,L)$, WLOG, one may assume that $Y=S(\Gamma_1,\Gamma_2)$, $\partial Y=\{\gamma_1,\gamma_2,\gamma_3,\gamma_4\},$  $\Gamma_1$ contains $\gamma_1,\gamma_2$ and $\Gamma_2$ contains $\gamma_1,\gamma_3$. Assume $$\Gamma_1=(\gamma_1,\gamma_2,\eta)\text{ and }\Gamma_2=(\gamma_1,\gamma_3,\xi)\ (\text{see Figure }\ref{figure-s04-2}).$$ 
\begin{figure}[b]
    \centering
    \includegraphics[width=2.5 in]{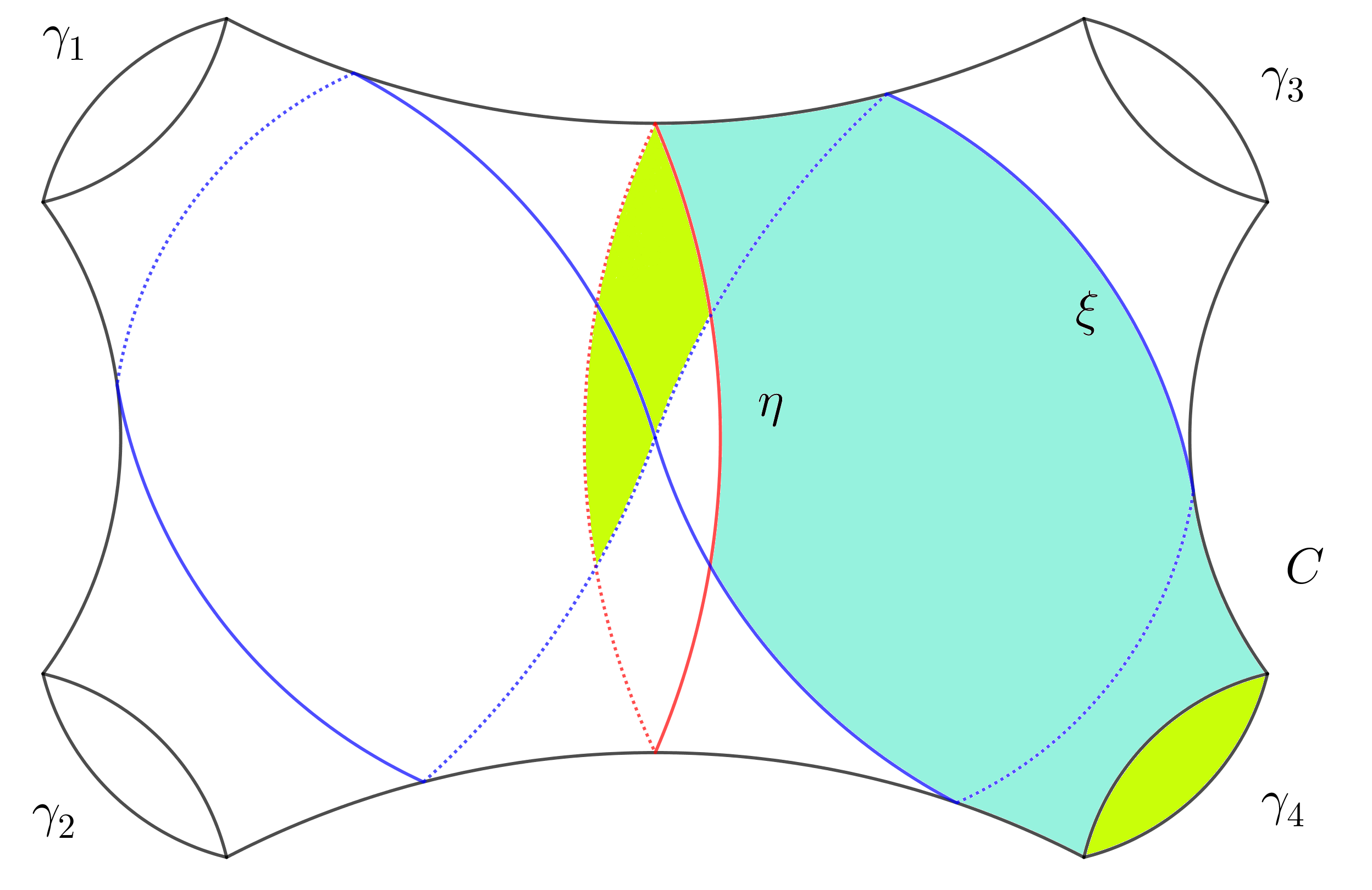}
    \caption{$\Gamma_1=\{\gamma_1,\gamma_2,\eta\}$ and $\Gamma_2=\{\gamma_1,\gamma_3,\xi\}$ in $Y=S(\Gamma_1,\Gamma_2)\cong S_{0,4}$ with $\partial Y=\{\gamma_1,\gamma_2,\gamma_3,\gamma_4\}$}
    \label{figure-s04-2}
\end{figure}
  In this case, $\ell(\gamma_4)$ may exceed $L$. However,    since $P(\Gamma_1)\cup P(\Gamma_2)$ fills $Y$, there is a connected component $C$ of $Y\setminus P(\Gamma_1)\cup P(\Gamma_2)$ such that $C$ is topologically a cylinder and $\gamma_4$ is a connected component of $\partial C$. The other connected component of $\partial C$, is the union of some geodesic arcs on $\eta,\ \xi.$ It follows that \begin{equation}\label{condition s04021}
    \ell(\gamma_4)\leq \ell(\eta)+\ell(\xi). \end{equation}
     Since $\Gamma_1,\ \Gamma_2\in\Nset(X,L),$ we have \begin{equation}\label{condition s04022}
    \left\{\begin{aligned}
\ell(\gamma_1)+\ell(\gamma_2)&+\ell(\eta)\leq 2L\\
\ell(\gamma_1)+\ell(\gamma_3)&+\ell(\xi)\leq 2L\\
\ell(\gamma_1)&\geq 10\log L\\
\end{aligned}\right..\end{equation}
    It follows from \eqref{condition s04021} and \eqref{condition s04022} that \begin{equation}
\ell(\gamma_1)+\ell(\gamma_2)+\ell(\gamma_3)+\ell(\gamma_4)\leq 4L-\ell(\gamma_1)\leq 4L-10\log L.
    \end{equation}
    Hence $(\gamma_1,\gamma_2,\gamma_3,\gamma_4,\eta)\in\mathcal{G}_{0,4}^0(X)$ and their lengths satisfy
    \begin{equation}\label{condition s0402}
    \begin{aligned}
&\ell(\gamma_1),\ell(\gamma_2),\ell(\gamma_3),\ell(\eta)\in[10\log L,L], \\
&2\ell(\gamma_1)+\ell(\gamma_2)+\ell(\gamma_3)+\ell(\gamma_4)\leq 4L.
     \end{aligned}\end{equation}
    Consider the map $$
    \pi:(\Gamma_1,\Gamma_2)\mapsto (\gamma_1,\gamma_2,\gamma_3,\gamma_4,\eta).
    $$ 
    Then
\begin{equation}\label{count-04-2}\begin{aligned}
    C_{0,4}^{0,2}(X,L)&\leq\sum\limits_{(\gamma_1,\gamma_2,\gamma_3,\gamma_4,\eta)\in\mathcal{G}_{1,2}^0(X)}1_{D_{0,4}^{0,2}(L)}(\ell(\gamma_1),\ell(\gamma_2),\ell(\gamma_3),\ell(\gamma_4),\ell(\eta))\\
    &\cdot\#\pi^{-1}(\gamma_1,\gamma_2,\gamma_3,\gamma_4,\eta).
\end{aligned}
\end{equation}
For any fixed $(\gamma_1,\gamma_2,\gamma_3,\gamma_4,\eta)$ and $(\Gamma_1,\Gamma_2)\in \mathcal{C}_{0,4}^{0,2}(X,L)$ with $$\pi(\Gamma_1,\Gamma_2)=(\gamma_1,\gamma_2,\gamma_3,\gamma_4,\eta),$$
$\Gamma_1$ is fixed and $\Gamma_2$ is determined by $\xi$.
Simple closed geodesic $\xi$ has length $\leq L$, and bounds an $S_{0,3}$ in $Y$ along with $\gamma_1,\gamma_3.$  It follows from \eqref{eq counting in S04}
 that 
\[\#\pi^{-1}(\gamma_1,\gamma_2,\gamma_3,\gamma_4,\eta)\leq   \frac{200\ell(\gamma_2)}{\cR(\ell(\gamma_2),\ell(\gamma_4),L)}.\]
Together with \eqref{condition s0402} and \eqref{count-04-2}, one may complete the proof. 
\end{proof}

\begin{lemma}\label{lemma0403}
 For $X\in \M_g$ and $L>1$, we have  \begin{equation}\label{equation0403}
    \begin{aligned}
  C_{0,4}^{0,3}(X,L)
\prec&\sum_{(\gamma_1,\gamma_2,\gamma_3,\gamma_4,\eta)\in\mathcal{G}_{0,4}^0(X)}1_{D_{0,4}^{0,3}(L)}(\ell(\gamma_1),\ell(\gamma_2),\ell(\gamma_3),\ell(\gamma_4),\ell(\eta))\\
\cdot&\frac{\ell(\gamma_3)}{\cR(\ell(\gamma_3),\ell(\gamma_4),L)},
     \end{aligned}
    \end{equation}
     where the domain $D_{0,4}^{0,3}(L)$ is defined as
     \begin{align*}
    D_{0,4}^{0,3}(L)=\left\{(x_1,x_2,x_3,x_4,y)\in\mathbb{R}^5_{\geq 0};\ \begin{matrix}
        x_1,\ x_2\in[10\log L ,L],\ y\leq L,\\
        2x_1+2x_2+x_3+x_4\leq 4L
    \end{matrix}\right\}.
    \end{align*}
\end{lemma}
\begin{proof}
 For any $(\Gamma_1,\Gamma_2)\in\mathcal{C}_{0,4}^{0,3}(X,L)$,
    WLOG, one may assume that $Y=S(\Gamma_1,\Gamma_2)$, $\partial Y=\{\gamma_1,\gamma_2,\gamma_3,\gamma_4\}$ and both  $\Gamma_1,\Gamma_2$ contain $\gamma_1,\gamma_2$.  Assume 
    $$\Gamma_1=(\gamma_1,\gamma_2,\eta)\text{ and }\Gamma_2=(\gamma_1,\gamma_2,\xi)\ (\text{see Figure }\ref{figure-s04-3}).$$ Then both $\eta$ and $\xi$ will bound a $S_{0,3}$ in $Y$ along with $\gamma_1\cup\gamma_2$.
\begin{figure}[b]
    \centering
    \includegraphics[width=2.5 in]{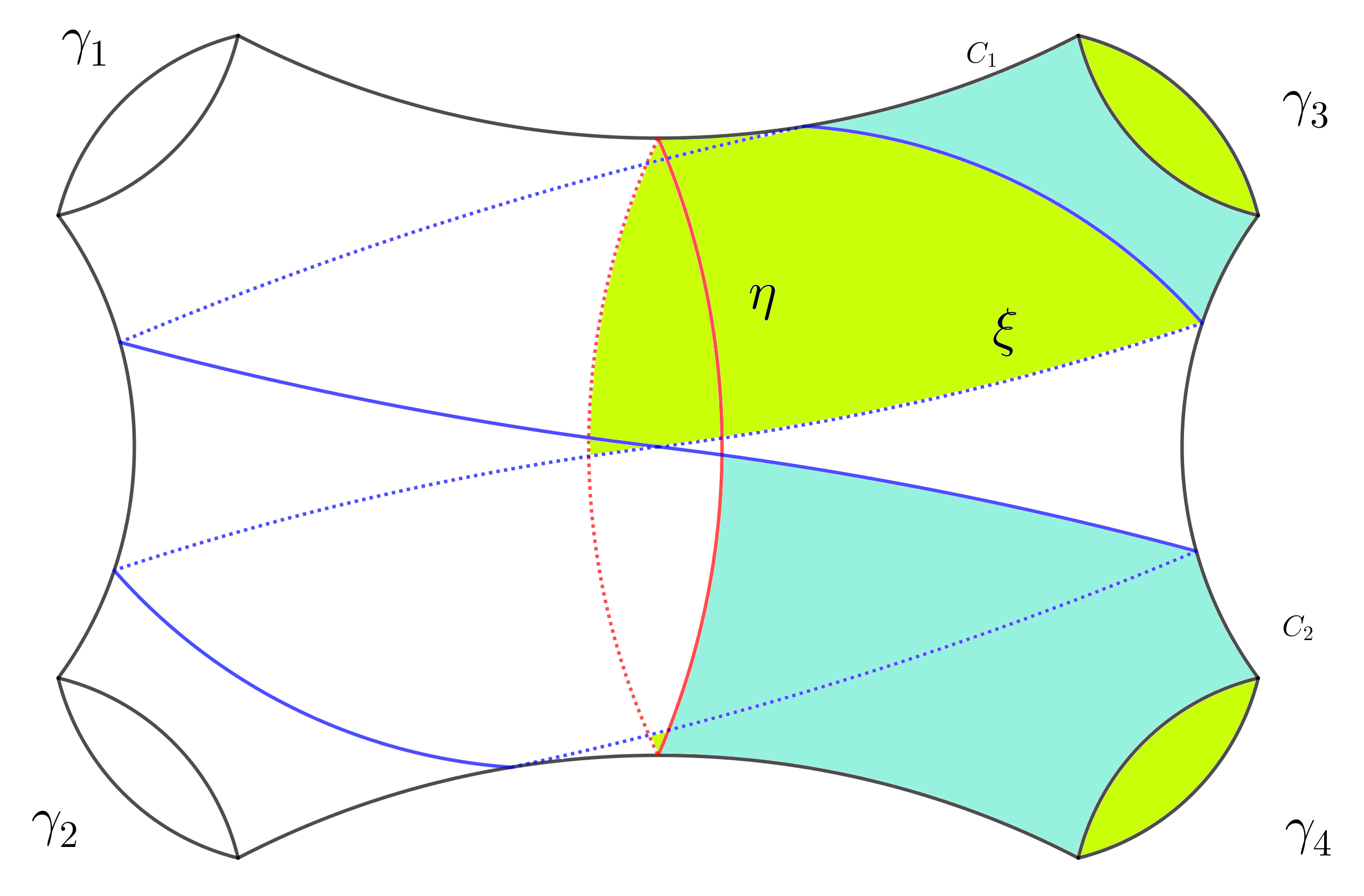}
    \caption{$\Gamma_1=\{\gamma_1,\gamma_2,\eta\}$ and $\Gamma_2=\{\gamma_1,\gamma_2,\xi\}$ in $Y=S(\Gamma_1,\Gamma_2)\cong S_{0,4}$ with $\partial Y=\{\gamma_1,\gamma_2,\gamma_3,\gamma_4\}$}
    \label{figure-s04-3}\end{figure}
     In this case, both  $\ell(\gamma_3)$ and $\ell(\gamma_4)$ may exceed $L$. Since $P(\Gamma_1)\cup P(\Gamma_2)$ fills $Y$, there are  two connected components $C_1,\ C_2$ of $Y\setminus P(\Gamma_1)\cup P(\Gamma_2)$ such that both $C_1,\ C_2$ are topologically  cylinders, $\gamma_3$ is a connected component of $\partial C_1$  and $\gamma_4$ is a connected component of $\partial C_2$. The other connected components of $\partial C_1$ and $\partial C_2$, are the union of different geodesic arcs on $\eta,\xi.$  It is clear that \begin{equation}\label{condition s04031}
\ell(\gamma_3)+\ell(\gamma_4)\leq \ell(\eta)+\ell(\xi).     \end{equation} 
     Since $\Gamma_1,\Gamma_2\in\Nset(X,L),$   we have \begin{equation}\label{condition s04032}
    \left\{\begin{aligned}
\ell(\gamma_1)+\ell(\gamma_2)&+\ell(\eta)\leq 2L\\
\ell(\gamma_1)+\ell(\gamma_2)&+\ell(\xi)\leq 2L\\
\ell(\gamma_1)&\geq 10\log L\\
\ell(\gamma_2)&\geq 10\log L\\
\end{aligned}\right..\end{equation}
Then it follows from \eqref{condition s04031} and \eqref{condition s04032} that $$
\ell(\gamma_1)+\ell(\gamma_2)+\ell(\gamma_3)+\ell(\gamma_4)\leq 4L-\ell(\gamma_1)-\ell(\gamma_2)\leq 4L-20\log L.
    $$
 Hence $(\gamma_1,\gamma_2,\gamma_3,\gamma_4,\eta)\in\mathcal{G}_{1,2}^0(X)$ and their lengths satisfy
    \begin{equation}\label{condition s0403}
 \begin{aligned}
&\ell(\gamma_1),\ell(\gamma_2),\ell(\eta)\in[10\log L,L], \\
&2\ell(\gamma_1)+2\ell(\gamma_2)+\ell(\gamma_3)+\ell(\gamma_4)\leq 4L.
     \end{aligned}   \end{equation}
      Consider the map $$
    \pi:(\Gamma_1,\Gamma_2)\mapsto (\gamma_1,\gamma_2,\gamma_3,\gamma_4,\eta).
    $$ 
    Then
\begin{equation}\label{count-04-3}\begin{aligned}
    C_{0,4}^{0,3}(X,L)&\leq\sum\limits_{(\gamma_1,\gamma_2,\gamma_3,\gamma_4,\eta)\in\mathcal{G}_{1,2}^0(X)}1_{D_{0,4}^{0,3}(L)}(\ell(\gamma_1),\ell(\gamma_2),\ell(\gamma_3),\ell(\gamma_4),\ell(\eta))\\
    &\cdot\#\pi^{-1}(\gamma_1,\gamma_2,\gamma_3,\gamma_4,\eta).
\end{aligned}
\end{equation}
 For any fixed $(\gamma_1,\gamma_2,\gamma_3,\gamma_4,\eta)$ and $(\Gamma_1,\Gamma_2)\in \mathcal{C}_{0,4}^{0,3}(X,L)$ with $$\pi(\Gamma_1,\Gamma_2)=(\gamma_1,\gamma_2,\gamma_3,\gamma_4,\eta),$$
$\Gamma_1$ is fixed and $\Gamma_2$ is determined by $\xi$.
Simple closed geodesic $\xi$ has length $\leq L$, and bounds an $S_{0,3}$ in $Y$ along with $\gamma_1,\ \gamma_3.$  It follows from \eqref{eq counting in S04}
 that 
\[\#\pi^{-1}(\gamma_1,\gamma_2,\gamma_3,\gamma_4,\eta)\leq   \frac{200\ell(\gamma_2)}{\cR(\ell(\gamma_2),\ell(\gamma_4),L)}.\]
Together with \eqref{condition s0403} and \eqref{count-04-3}, one may complete the proof.
\end{proof}

Assume $\mathcal{H}_{0,4}^0$ is the set consists of all simple closed geodesic multi-curves $\Gamma=(\gamma_1,\gamma_2,\gamma_3,\gamma_4,\eta)$ such that $\gamma_1\cup\gamma_2\cup\gamma_3\cup\gamma_4$ cuts off a subsurface $Y\cong S_{0,4}$ in $X$, and $\eta$ bounds a pair of pants in $Y$ along with $\gamma_1,\ \gamma_2$. Then the complement surface $X\setminus\Gamma$ could be one of the following five types: 
\begin{enumerate}
    \item $X\setminus\Gamma\cong S_{0,3}\cup S_{0,3}\cup S_{g-3,4}$;
    \item $X\setminus\Gamma\cong S_{0,3}\cup S_{0,3}\cup S_{g_1,1}\cup S_{g_2,3}$ for some $g_1\geq 1$ and $g_1+g_2=g-2$;
    \item $X\setminus\Gamma\cong S_{0,3}\cup S_{0,3}\cup S_{g_1,1}\cup S_{g_2,1}\cup S_{g_3,2}$ for some $g_1,\ g_2,\ g_3\geq 1$ and  $g_1+g_2+g_3=g-1$;
    \item $X\setminus\Gamma\cong S_{0,3}\cup S_{0,3}\cup S_{g_1,1}\cup S_{g_2,1}\cup S_{g_3,1}\cup S_{g_4,1}$ for some  $g_1,\ g_2,\ g_3,\ g_4\geq 1$ and $g_1+g_2+g_3+g_4=g$;
    \item $X\setminus\Gamma\cong S_{0,3}\cup S_{0,3}\cup S_{g_1,2}\cup S_{g_2,2}$ for some $g_1,g_2\geq 1$ and $g_1+g_2=g-2$.
\end{enumerate}  
Recall the definition of \eqref{e-wpvol}. For any $\nu=(x_1,x_2,x_3,x_4,y)\in\mathbb{R}^5_{\geq 0}$, 
$$V_g(\Gamma,\nu)=\Volwp\big(\M\left( S_g(\Gamma),\ell(\Gamma)=\nu\right)\big)$$  is the Weil-Petersson volume of the moduli space of Riemann surfaces that is homeomorphic to  $S_g\setminus\Gamma$ with geodesic boundaries of lengths $\ell(\gamma_i)=x_i\ (1\leq i\leq 4)$ and $\ell(\eta)=y$.
Then by Theorem \ref{thm Vgn/Vgn+1} and Theorem \ref{thm sum-prod-V} we have \begin{equation}\label{s040 ineq2}
    \begin{aligned}
&\sum\limits_{\Gamma\in\mathcal{H}_{0,4}^0}V_g(\Gamma,0)\\
        =&V_{g-3,4}+\sum_{g_1+g_2=g-2}V_{g_1,1}V_{g_2,3}
    +\sum_{g_1+g_2+g_3=g-1}V_{g_1,1}V_{g_2,1}V_{g_3,2}\\+&    \sum_{g_1+g_2+g_3+g_4=g}V_{g_1,1}V_{g_2,1}V_{g_3,1}V_{g_4,1}+\sum_{g_1+g_2=g-2}V_{g_1,2}V_{g_2,2}\\
    \prec&W_{2g-4}\left(1+\frac{1}{g}+\frac{1}{g^2}+\frac{1}{g^3} \right)\prec\frac{V_g}{g^2}.
    \end{aligned}
\end{equation}

Now we are ready to eatimate $\E\left[C_{0,4}^0(X,L)\right]$.

\begin{proposition}\label{E s040}
    For $L>1$ and large $g$,
    $$
    \E\left[C_{0,4}^0(X,L) \right]\prec \frac{Le^{2L}}{g^2}.
    $$
\end{proposition}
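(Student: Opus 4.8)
The plan is to bound $C_{0,4}^0(X,L)$ by the three counts estimated in Lemmas \ref{lemma0401}, \ref{lemma0402} and \ref{lemma0403}. Since $\overline{S(\Gamma_1,\Gamma_2)}\simeq S_{0,4}$ has four boundary geodesics and each of $\Gamma_1,\Gamma_2$ carries exactly two of them, the number of distinct boundary components lying in $\Gamma_1\cup\Gamma_2$ is $4$, $3$ or $2$; these three possibilities partition $\MC_{0,4}^0(X,L)$ and are controlled term-by-term by the three lemmas (the symmetry factors and the constant $200$ being absorbed into $\prec$). Hence it suffices to take the expectation of each right-hand side and show it is $\prec Le^{2L}/g^2$.

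For each of the three sums I would apply Mirzakhani's integration formula (Theorem \ref{thm Mirz int formula}) to the ordered simple closed multicurve $(\gamma_1,\gamma_2,\gamma_3,\gamma_4,\eta)$, where $\gamma_1\cup\gamma_2\cup\gamma_3\cup\gamma_4=\partial Y$ with $Y\simeq S_{0,4}$ and $\eta$ splits $Y$ into two pairs of pants. Cutting $X$ along this multicurve yields those two pairs of pants (each of volume $V_{0,3}\equiv 1$) together with the complement $X\setminus\overline Y$, so that, summing over all topological types of the complement, the relevant volume factor is exactly $V^\Sigma(S_g\setminus\overline Y,x,y,z,w)$, with $x,y,z,w$ the lengths of $\gamma_1,\dots,\gamma_4$ and $s$ that of $\eta$. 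Then Theorem \ref{thm Vgn(x) small x} linearizes $V^\Sigma(S_g\setminus\overline Y,x,y,z,w)\cdot xyzw$ into $V^\Sigma(S_g\setminus\overline Y)\prod_i\sinh(\cdot/2)$, estimate \eqref{s040 ineq2} factors out $V^\Sigma(S_g\setminus\overline Y)/V_g\prec 1/g^2$, and Theorem \ref{thm estimation R,D} replaces each factor $\ell/\cR(\cdot,\cdot,L)$ by $\prec (1+\ell)\bigl(1+e^{L/2}e^{-(\cdot)/2}\bigr)$. This reduces everything to explicit elementary integrals in $x,y,z,w,s$.

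The main obstacle is carrying out those integrals so that the power of $L$ comes out correctly rather than blowing up. The decisive inputs are the length constraints recorded in the lemmas. On the one hand, the lower bounds $\ell(\gamma_i)\geq 10\log L$ produce one-variable integrals such as $\int_{10\log L}^{L}e^{-x/2}\,dx\prec L^{-5}$, and these gains cancel the polynomial factors coming from the weights $\int_0^L s\,ds\asymp L^2$ and $\int_0^L(1+y)\,dy\asymp L^2$ and from the $(1+\ell)$ terms in the $\cR$-bound. On the other hand, integrating out the length $w$ of the boundary geodesic \emph{not} contained in $\Gamma_1\cup\Gamma_2$ (in Lemmas \ref{lemma0402} and \ref{lemma0403}) against the total-length constraint $2\ell(\gamma_1)+\ell(\gamma_2)+\ell(\gamma_3)+\ell(\gamma_4)\leq 4L$ causes the $y,z$-exponentials to cancel and pins the surviving exponential at $e^{2L}$; in Lemma \ref{lemma0401} the same role is played by the two sum constraints $\ell(\gamma_1)+\ell(\gamma_2)\leq 2L-10\log L$ and $\ell(\gamma_3)+\ell(\gamma_4)\leq 2L-10\log L$. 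Keeping careful track of which exponentials cancel is exactly what makes each case $\prec Le^{2L}/g^2$. Combined with $\E[C_{0,4}^1(X,L)]\prec e^{2L}/g^2$ from Proposition \ref{E s041}, this then gives Proposition \ref{c04}.
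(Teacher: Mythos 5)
Your proposal is correct and follows essentially the same route as the paper: the same three-way partition of $\MC_{0,4}^0(X,L)$ via Lemmas \ref{lemma0401}, \ref{lemma0402} and \ref{lemma0403}, Mirzakhani's integration formula applied to the quintuple $(\gamma_1,\gamma_2,\gamma_3,\gamma_4,\eta)$ with volume factor $V^\Sigma(S_g\setminus\overline{Y},x,y,z,w)$, then Theorem \ref{thm Vgn(x) small x}, \eqref{s040 ineq2} and Theorem \ref{thm estimation R,D} reducing everything to elementary integrals. Your account of the cancellation mechanism is also the one the paper exploits in \eqref{expectation0401}--\eqref{expectation0403}: integrating out the unconstrained boundary length against the total-length constraint kills the remaining exponentials, the lower bounds $\ell(\gamma_i)\geq 10\log L$ supply the $L^{-5}$ gains, and the dominant contribution $\prec L^{3}\cdot L^{3}e^{2L-5\log L}/g^{2}=Le^{2L}/g^{2}$ arises exactly from the three-shared-boundary case.
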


\begin{proof}
   Following
Lemma \ref{lemma0401}, Lemma \ref{lemma0402} and 
 Lemma \ref{lemma0403},  we have that for $L>1$,
    \begin{equation}\label{equ04main2}
    \begin{aligned}
    &\E\Big[C_{0,4}^0(X,L)\Big]\\
     \leq &\E\Bigg[\sum_{\Gamma\in\mathcal{H}_{0,4}^0} \Big( 1_{D_{0,4}^{0,1}(L)}(\ell(\gamma_1),\ell(\gamma_2),\ell(\gamma_3),\ell(\gamma_4),\ell(\eta)) \cdot\frac{\ell(\gamma_3)}{\cR(\ell(\gamma_3),\ell(\gamma_4),L)}\\
     +&1_{D_{0,4}^{0,2}(L)}(\ell(\gamma_1),\ell(\gamma_2),\ell(\gamma_3),\ell(\gamma_4),\ell(\eta))\cdot \frac{\ell(\gamma_2)}{\cR(\ell(\gamma_2),\ell(\gamma_4),L)}\\
  +&1_{D_{0,4}^{0,3}(L)}(\ell(\gamma_1),\ell(\gamma_2),\ell(\gamma_3),\ell(\gamma_4),\ell(\eta))\cdot \frac{\ell(\gamma_3)}{\cR(\ell(\gamma_3),\ell(\gamma_4),L)}    \Big)\Bigg],
    \end{aligned}
    \end{equation}
    where the domains $D_{0,4}^{0,1}(L),\ D_{0,4}^{0,2}(L),\ D_{0,4}^{0,3}(L)$ have been defined in Lemma \ref{lemma0401}, Lemma \ref{lemma0402} and Lemma \ref{lemma0403} respectively.
 
 For $L>1$, applying Mirzakhani's integration formula Theorem \ref{thm Mirz int formula} to function 
 $$1_{D_{0,4}^{0,1}(L)}(x_1,x_2,x_3,x_4,y)\cdot \frac{x_3}{\cR(x_3,x_4,L)}$$
 and all simple closed multi-curves in $\mathcal{H}_{0,4}^{0}$, together with  
Theorem \ref{mirz07}, 
 Theorem \ref{thm Vgn(x) small x}  and Theorem \ref{thm estimation R,D},  if $\nu=(x_1,x_2,x_3,x_4,y)$, we have \begin{equation}
    \label{expectation0401}\begin{aligned}
&\E\Bigg[\sum_{\Gamma\in\mathcal{H}_{0,4}^0} 1_{D_{0,4}^{0,1}(L)}(\ell(\gamma_1),\ell(\gamma_2),\ell(\gamma_3),\ell(\gamma_4),\ell(\eta))\cdot\frac{\ell(\gamma_3)}{\cR(\ell(\gamma_3),\ell(\gamma_4),L)}\Bigg]\\
     \prec&\frac{1}{V_g}\int_{D_{0,4}^{0,1}(L)}(1+x_3)\left(1+e^{\frac{L-x_3-x_4}{2}}\right)\cdot\sum\limits_{\Gamma\in\mathcal{H}_{0,4}^{0}}V_g(\Gamma,\nu)\\
\cdot & \prod\limits_{i=1}^4 x_i\cdot ydx_1dx_2dx_3dx_4dy \\
\prec&\frac{1}{V_g}\int_{D_{0,4}^{0,1}(L)}(1+x_3)\left(1+e^{\frac{L-x_3-x_4}{2}}\right)
 \cdot \sum\limits_{\Gamma\in\mathcal{H}_{0,4}^{0}}V_g(\Gamma,0)\\
     \cdot& \prod\limits_{i=1}^4\sinh\frac{x_i}{2}\cdot y\cdot dx_1dx_2dx_3dx_4dy\\ \prec&\frac{\sum\limits_{\Gamma\in\mathcal{H}_{0,4}^{0}}V_g(\Gamma,0)}{V_g}\cdot L^3
    \cdot  \int_{\tiny
    \begin{aligned}
        x_1+x_2&\leq 2L-10\log L\\
        x_3+x_4&\leq 2L-10\log L\\
    \end{aligned}
    }
    \left(1+e^{\frac{L-x_3-x_4}{2}}\right)\\
    \cdot &e^{\frac{x_1+x_2+x_3+x_4}{2}}dx_1dx_2dx_3dx_4 \\
\prec&\frac{\sum\limits_{\Gamma\in\mathcal{H}_{0,4}^{0}}V_g(\Gamma,0)}{V_g}\cdot L^3 \cdot\left( 
     L^2 e^{2L-10\log L}+L^3e^{\frac{3L}{2}-5\log L}
      \right).
    \end{aligned}\end{equation}

Similarly, applying Mirzakhani's integration formula Theorem \ref{thm Mirz int formula} to functions 
$$1_{D_{0,4}^{0,2}(L)}(x_1,x_2,x_3,x_4,y)\cdot \frac{x_2}{\cR(x_2,x_4,L)}$$
and
$$1_{D_{0,4}^{0,3}(L)}(x_1,x_2,x_3,x_4,y)\cdot \frac{x_3}{\cR(x_3,x_4,L)}$$
 and all simple closed geodesics in $\mathcal{G}_{0,4}^0(X)$, together with Theorem \ref{mirz07}, 
 Theorem \ref{thm Vgn(x) small x}  and Theorem \ref{thm estimation R,D},   we have 
    \begin{align}\label{expectation0402}
\nonumber&\E\Bigg[\sum_{\Gamma\in\mathcal{H}_{0,4}^{0}} 1_{D_{0,4}^{0,2}(L)}(\ell(\gamma_1),\ell(\gamma_2),\ell(\gamma_3),\ell(\gamma_4),\ell(\eta))\cdot \frac{\ell(\gamma_2)}{\cR(\ell(\gamma_2),\ell(\gamma_4),L)}
     \Bigg]\\
     \prec&\frac{1}{V_g}\int_{D_{0,4}^{0,2}(L)}(1+x_2)\left(1+e^{\frac{L-x_2-x_4}{2}}\right)
     \cdot \sum\limits_{\Gamma\in\mathcal{H}_{0,4}^0}V_g(\Gamma,0)\\
    \nonumber \cdot& \prod\limits_{i=1}^4\sinh\frac{x_i}{2}\cdot y\cdot dx_1dx_2dx_3dx_4dy\\
\nonumber\prec&\frac{\sum\limits_{\Gamma\in\mathcal{H}_{0,4}^0}V_g(\Gamma,0)}{V_g}\cdot  L^3\cdot \int_{\substack{
      0\leq x_1,x_2,x_3\leq L\\
        x_1+x_2+x_3+x_4\leq 4L-10\log L
  }
    }
    \left(1+e^{\frac{L-x_2-x_4}{2}}\right)\\
   \nonumber \cdot&e^{\frac{x_1+x_2+x_3+x_4}{2}}dx_1dx_2dx_3dx_4\\
\nonumber\prec&\frac{\sum\limits_{\Gamma\in\mathcal{H}_{0,4}^0}V_g(\Gamma,0)}{V_g}\cdot L^3 \cdot\left( 
     L^3     e^{2L-5\log L}
    +
     L^3e^{\frac{3L}{2}}\right)
    \end{align}
and \begin{equation}
    \label{expectation0403}\begin{aligned}
&\E\Bigg[\sum\limits_{\Gamma\in\mathcal{H}_{0,4}^{0}}1_{D_{0,4}^{0,3}(L)}(\ell(\gamma_1),\ell(\gamma_2),\ell(\gamma_3),\ell(\gamma_4),\ell(\eta))\cdot \frac{\ell(\gamma_3)}{\cR(\ell(\gamma_3),\ell(\gamma_4),L)}  
     \Big)\Bigg]\\
     \prec&\frac{1}{V_g}\int_{D_{0,4}^{0,3}(L)}(1+x_3)\left(1+e^{\frac{L-x_3-x_4}{2}}\right)
     \cdot \sum\limits_{\Gamma\in\mathcal{H}_{0,4}^0}V_g(\Gamma,0)
\\
   \cdot& \prod\limits_{i=1}^4\sinh\frac{x_i}{2}\cdot y\cdot dx_1dx_2dx_3dx_4dy\\   \prec&\frac{\sum\limits_{\Gamma\in\mathcal{H}_{0,4}^0}V_g(\Gamma,0)}{V_g}\cdot L^3
    \cdot  \int_{\substack{
      0\leq x_1,x_2\leq L\\
        x_1+x_2+x_3+x_4\leq 4L-20\log L
    }
    }
    \left(1+e^{\frac{L-x_3-x_4}{2}}\right)\\
    \cdot& e^{\frac{x_1+x_2+x_3+x_4}{2}}dx_1dx_2dx_3dx_4\\
  \prec&\frac{\sum\limits_{\Gamma\in\mathcal{H}_{0,4}^0}V_g(\Gamma,0)}{V_g}\cdot L^3 \cdot\left( 
     L^3     e^{2L-10\log L}
    +
     L^3e^{\frac{3L}{2}}\right).
    \end{aligned}\end{equation}
Then combining \eqref{equ04main2}, \eqref{expectation0401}, \eqref{expectation0402} and \eqref{expectation0403} we have for $L>1$, 
\begin{equation}\label{s040 ineq1}
\E\left[C_{0,4}^0(X,L)\right]
    \prec\frac{\sum\limits_{\Gamma\in\mathcal{H}_{0,4}^0}\textnormal{Vol}_{\textnormal{WP}}(\Gamma,0)}{V_g}\cdot Le^{2L}.
  \end{equation}
Therefore, by \eqref{s040 ineq2} and \eqref{s040 ineq1} we obtain $$
\E\left[C_{0,4}^0(X,L)\right]=O\left(\frac{Le^{2L}}{g^2}\right)
$$
as desired.
\end{proof}

Now we are ready to prove Proposition \ref{c04}.

\begin{proof}[Proof of Proposition
\ref{c04}] 
Since $\MC_{0,4}(X,L)=\MC_{0,4}^0(X,L)\cup\MC_{0,4}^1(X,L)$, it follows from  Proposition \ref{E s041} and  Proposition \ref{E s040} that $$
\E\left[C_{0,4}(X,L)\right]\leq \E\left[C_{0,4}^1(X,L)\right]+\E\left[C_{0,4}^0(X,L)\right]\prec \frac{Le^{2L}}{g^2}.
$$    
This completes the proof.
\end{proof}

\subsection{Estimations of $\E\left[D(X,L) \right]$}
For this part, we always assume that $g>2$. We will show that as $g\to \infty$,
\[\E\left[D(X,L_g) \right]=o\left(\E\left[\Nnumber(X,L_g)\right]^2\right).\]
More precisely,

\begin{proposition}\label{D(x,L)}
    For $L>1$, we have
    $$
    \E\left[D(X,L) \right]\prec \frac{e^{2L}}{g^2L^6}.
    $$
\end{proposition}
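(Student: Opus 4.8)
The plan is to classify the pairs $(\Gamma_1,\Gamma_2)\in\MD(X,L)$ according to how the two pairs of pants meet along their boundaries, and then run Mirzakhani's integration formula for each resulting ordered multicurve. Since $P(\Gamma_1)$ and $P(\Gamma_2)$ have disjoint interiors while $\overline{P(\Gamma_1)}\cap\overline{P(\Gamma_2)}\neq\emptyset$, the two pairs of pants must share either one, two, or three of their boundary geodesics. Sharing all three would force $\overline{P(\Gamma_1)}\cup\overline{P(\Gamma_2)}$ to be a closed genus-$2$ surface, hence all of $S_g$, impossible for $g>2$; so only two cases remain. When they share exactly one boundary geodesic $\gamma$, the union $\overline{P(\Gamma_1)}\cup\overline{P(\Gamma_2)}$ is an $S_{0,4}$ and $\Gamma_1\cup\Gamma_2$ is a $5$-curve multicurve (the shared $\gamma$ together with four free boundaries); when they share two boundary geodesics the union is an $S_{1,2}$ and $\Gamma_1\cup\Gamma_2$ is a $4$-curve multicurve. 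In each case, prescribing which slots of the two ordered triples carry the shared curve(s) gives only finitely many topological patterns, so it suffices to bound each pattern up to a universal constant.

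For a fixed pattern I would apply Theorem \ref{thm Mirz int formula}. The crucial bookkeeping is that \emph{a shared curve lies between the two pairs of pants, so in the integration formula it contributes a $V_{0,3}=1$ factor on each side and no $\sinh$ factor}, while each non-shared curve bounds the complementary subsurface and therefore contributes a $\sinh(\cdot/2)$ factor through Theorem \ref{thm Vgn(x) small x}. Summing the product of the complementary volumes over all topological types of $X\setminus(\overline{P(\Gamma_1)}\cup\overline{P(\Gamma_2)})$ — using Theorem \ref{thm Vgn/Vgn+1} and Theorem \ref{thm sum-prod-V} exactly as in \eqref{cond s12 vg-2/vg} and \eqref{s040 ineq2} — produces a factor $\prec 1/g^2$ in every case. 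This reduces the estimate to a Euclidean integral over the lengths, subject to the constraints from $\Gamma_1,\Gamma_2\in\Nset(X,L)$: each triple has one length $\leq L$, the other two summing to $\leq L$, and all three $\geq 10\log L$.

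Evaluating these integrals is where the small factor appears. In the $S_{0,4}$ case there are four $\sinh$-carrying free curves, so the exponential budget is at most $e^{2L}$; this maximum is attained only when the shared geodesic $\gamma$ occupies a paired slot (a summand of an $\ell(\gamma_{i2})+\ell(\gamma_{i3})\le L$ constraint) in \emph{both} triples, which forces the two free curves paired with it to have length $\le L-\ell(\gamma)$. Writing $x=\ell(\gamma)$, the four $\sinh$-integrals then produce $\prec e^{2L-x}$, and integrating the leftover Mirzakhani length factor against this gives $\int_{10\log L}^{L} x\,e^{2L-x}\,dx \prec e^{2L}\int_{10\log L}^{\infty} x e^{-x}\,dx \prec e^{2L}\,(\log L)\,L^{-10}$. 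Because $x=\ell(\gamma)\ge 10\log L$, the suppression $e^{-x}\le L^{-10}$ is precisely what beats the target power of $L$. All other patterns are smaller: if $\gamma$ sits in a distinguished slot the exponent drops to $e^{L}$ (with only polynomial cost in $L$), and the $S_{1,2}$ case has only two free curves and hence exponent at most $e^{L}$. Since both $e^{L}\cdot\mathrm{poly}(L)$ and $e^{2L}(\log L)L^{-10}$ are $\prec e^{2L}/L^{6}$, combining the finitely many patterns with the $1/g^2$ volume factor yields $\E[D(X,L)]\prec e^{2L}/(g^2L^6)$.

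The main obstacle is the careful case analysis together with the bookkeeping of which curves receive $\sinh$ factors: one must correctly match each of the finitely many ways the shared geodesic(s) can sit inside the two \emph{ordered} triples with the corresponding length constraints, and recognize that it is exactly the lower bound $\ell(\gamma)\ge 10\log L$ on the shared curve — producing an $e^{-\ell(\gamma)}$ (or $e^{-\ell(\gamma)/2}$) factor under the integral — that converts the sharing into the polynomial gain in $L$. Once this is set up, the volume asymptotics together with the elementary $\sinh$-integrals (of the type already computed in \eqref{e-int-1}--\eqref{e-int-3}) finish the argument.
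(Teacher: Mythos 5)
Your proposal is correct and follows essentially the same route as the paper: the same dichotomy (one shared boundary geodesic giving an $S_{0,4}$ union, two shared giving $S_{1,2}$, three shared impossible), Mirzakhani's integration formula with shared curves contributing $V_{0,3}=1$ factors and free curves contributing $\sinh$ factors, the complement-volume sums giving the factor $\prec 1/g^{2}$, and the lower bound $10\log L$ producing the polynomial gain in $L$ --- the paper merely aggregates your slot-pattern analysis into the single constraint that any two curves of a triple have total length $\leq 2L-10\log L$, which yields $e^{2L}/L^{6}$ where your finer bookkeeping gives the even sharper $e^{2L}(\log L)/L^{10}$. One harmless slip: in the mixed pattern, where the shared curve sits in the distinguished slot of one triple and a paired slot of the other, the exponential budget is $e^{3L/2}$ up to polynomial factors rather than $e^{L}$, but this is still far below $e^{2L}/L^{6}$, so your conclusion stands.
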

\begin{proof}
For $(\Gamma_1,\Gamma_2)\in\MD(X,L)$, the two pairs of  pants $P(\Gamma_1)$ and $P(\Gamma_2)$  share one or two simple closed geodesic boundary components.
For the first case, assume  $$\Gamma_1=(\gamma_1,\gamma_2,\eta)\text{ and }\Gamma_2=(\gamma_3,\gamma_4,\eta).$$ 
By the definition of $\Nset(X,L)$, we have
$$10\log L\leq \eta\leq L$$
and
$$\ell(\gamma_1)+\ell(\gamma_2),\ \ell(\gamma_3)+\ell(\gamma_4) \in[0,2L-10\log L].$$
For the second case, assume 
$$\Gamma_1=(\gamma_1,\alpha,\beta)\text{ and }\Gamma_2=(\gamma_2,\alpha,\beta).$$
By the definition of $\Nset(X,L)$, we have
$$\ell(\gamma_1),\ \ell(\gamma_2),\ \ell(\alpha),\ \ell(\beta) \in[0,L].$$
Consider functions 
$$\psi_{1,L}(x_1,x_2,x_3,x_4,y)=1_{[10\log L,L]}(y)\cdot 1_{[0,2L-10\log L]^2}(x_1+x_2,x_3+x_4)$$
and 
$$\psi_{2,L}(x_1,x_2,y_1,y_2)=1_{[0,L]^4}(x_1,x_2,y_1,y_2).$$
   Then we have
   \begin{equation}\label{ineq D1}
        \begin{aligned}
            D(X,L)
\leq&\sum_{(\gamma_1,\gamma_2,\gamma_3,\gamma_4,\eta)\in\mathcal{G}_{0,4}^0(X)}\psi_{1,L}(\ell(\gamma_1),\ell(\gamma_2),\ell(\gamma_3),\ell(\gamma_4),\ell(\eta))
\\
+&\sum_{(\gamma_1,\gamma_2,\alpha,\beta)\in\mathcal{G}_{1,2}^2(X)}\psi_{2,L}(\ell(\gamma_1),\ell(\gamma_2),\ell(\alpha),\ell(\beta)).
        \end{aligned}
    \end{equation}
     where $\mathcal{G}_{0,4}^0(X)$ is defined in Subsection \ref{sec-4.3.3}, and $\mathcal{G}_{1,2}^2(X)$ is defined in Lemma \ref{lemma122}.
Set $$
\textbf{cond}=\left\{\begin{matrix}
    &0\leq y\leq L\\
    &x_1+x_2\leq 2L-10\log L\\
    &x_3+x_4\leq 2L-10\log L\\
\end{matrix}.\right.
$$
Applying Mirzakhani's integration formula Theorem \ref{thm Mirz int formula} to function $\psi_{1,L}$ and all simple closed multi-curves in $\mathcal{H}_{0,4}^0$, together with  Theorem \ref{thm Vgn/Vgn+1}, 
 Theorem \ref{thm Vgn(x) small x} and \eqref{s040 ineq2}, for $L>1$   we have 
     \begin{align}\label{ineq D02} 
\nonumber&\E\left[\sum_{\Gamma\in\mathcal{G}_{0,4}^0(X)}\psi_{1,L}(\ell(\gamma_1),\ell(\gamma_2),\ell(\gamma_3),\ell(\gamma_4),\ell(\eta))\right]\\
\prec&\frac{1}{V_g}\int_{\textbf{cond}}\sum_{\mathcal{H}_{0,4}^0}V_g(\Gamma,\nu)
\cdot \prod\limits_{i=1}^4 x_i\cdot y\cdot dx_1dx_2dx_3dx_4dy\\
\nonumber\prec&\frac{1}{g^2}\int_{\textbf{cond}}\prod\limits_{i=1}^4\sinh\frac{x_i}{2}\cdot y\cdot dx_1dx_2dx_3dx_4dy\prec\frac{e^{2L}}{g^2L^6},
     \end{align}
where in the last inequality we apply
\beqar
&&\int_{\substack{u>0,\ v>0, \\ u+v\leq 2L-10\log L}}\sinh\frac{u}{2}\sinh\frac{v}{2} dudv\\
&& \prec \int_{\substack{u>0,\ v>0,\\ u+v\leq 2L-10\log L}}e^{\frac{u+v}{2}} dudv\\
&&\prec \int_{0}^{2L-10\log L}\left(e^{\frac{x}{2}}\int_{0}^{2L-10\log L-u}e^{\frac{v}{2}}dv \right)du\asymp\frac{e^L}{L^4}.
\eeqar
For the remaining term, applying Mirzakhani's integration formula Theorem \ref{thm Mirz int formula} to function $\psi_{2,L}$ and all simple closed multi-curves in $\mathcal{H}_{1,2}^2$, together with Theorem \ref{thm Vgn/Vgn+1}, 
 Theorem \ref{thm Vgn(x) small x} and \eqref{cond s12 vg-2/vg}, we have for $L>1$, \begin{equation}
     \label{ineq D03}\begin{aligned}
         &\E\left[ \sum_{\Gamma\in\mathcal{G}_{1,2}^2(X)}\phi_{2,L}\left(\ell(\gamma_1),\ell(\gamma_2),\ell(\alpha),\ell(\beta)\right)\right]\\
       \prec&\frac{1}{V_g}\int_{[0,L]^4}\sum\limits_{\Gamma\in\mathcal{H}_{1,2}^2}V_g(\Gamma,\nu)\cdot x_1x_2y_1y_2\cdot dx_1dx_2dy_1dy_2\\
        =&\frac{1}{V_g}\int_{[0,L]^4}\left(V_{g-2,2}(x_1,x_2)+\sum_{(g_1,g_2)}V_{g_1,1}(x_1)V_{g_2,1}(x_2) \right)\\
        \cdot& x_1x_2y_1y_2\cdot dx_1dx_2dy_1dy_2\\
        \prec&\frac{V_{g-2,2}+\sum_{(g_1,g_2)}V_{g_1,1}V_{g_2,1}}{V_g}\int_{[0,L]^4}\sinh\frac{x_1}{2}\sinh\frac{x_2}{2}\cdot y_1y_2\cdot dx_1dx_2dy_1dy_2\\
        \prec&\frac{L^4e^L}{g^2}.
     \end{aligned} \end{equation}
 Combining \eqref{ineq D1}, \eqref{ineq D02} and \eqref{ineq D03}, we have $$
 \E\left[D(X,L)\right]\prec\frac{1}{g^2}\left(\frac{e^{2L}}{L^6}+L^4e^L\right)\prec\frac{e^{2L}}{g^2L^6}.
 $$
 This completes the proof.
\end{proof}

\subsection{Finish  of the proof}
Now we are ready to complete the proof of Theorem \ref{thm upper bound of ns-sys}.

\begin{proof}[Proof of Theorem \ref{thm upper bound of ns-sys}]
Take $L=L_g=\log g-\log \log g +\omega(g)>1$ with $\omega(g)=o(\log \log g)$.
    By \eqref{ineq Nset markov} and \eqref{separate N into ABCD} we have \begin{equation}\label{proof upper ineq}
        \begin{aligned}
           &\Prob\left(X\in \sM_g; \ \Nnumber(X,L_g)=0   \right)\\
           \leq& \frac{\left|\E\left[B(X,L_g)\right]-\E\left[\Nnumber(X,L_g)\right]^2\right|}{\E\left[\Nnumber(X,L_g)\right]^2}\\
+&\frac{\E\left[A(X,L_g)\right]+\E\left[C(X,L_g)\right]+\E\left[D(X,L_g)\right]}{\E\left[\Nnumber(X,L_g)\right]^2}.\\
        \end{aligned}
    \end{equation}
    By Proposition \ref{p-e-1} and Proposition \ref{estimation B(X,L)}  we have 
    \begin{equation}\label{ineq B-N^2}
        \frac{\left|\E\left[B(X,L_g)\right]-\E\left[\Nnumber(X,L_g)\right]^2\right|}{\E\left[\Nnumber(X,L_g)\right]^2}=O\left(\frac{\log L_g}{L_g}\right).
    \end{equation}
    By \eqref{compare AN} and Proposition \ref{p-e-1}, for $L=L_g=\log g-\log \log g+\omega(g)$ we have \begin{equation}
        \label{ineq A only}
\frac{\E\left[A(X,L_g)\right]}{\E\left[\Nnumber(X,L_g)\right]^2}\prec \frac{1}{\E\left[\Nnumber(X,L_g)\right]}=O\left(\frac{1}{e^{\omega(g)}} 
  \right).
    \end{equation}
By Proposition \ref{p-e-1}, Proposition \ref{prop C geq3}, Proposition \ref{c12} and Proposition \ref{c04}, fix $0<\epsilon<\frac{1}{2}$, we have 
    \begin{equation}
        \label{ineq C only}
\frac{\E\left[C(X,L_g)\right]}{\E\left[\Nnumber(X,L_g)\right]^2}\prec \left(   \frac{L_g^{65}e^{\epsilon L_g}}{g} +\frac{L_ge^{6L_g}}{g^9}+\frac{1+L_g}{L_g^2}  \right)=O\left(\frac{1}{L_g}\right).
    \end{equation}
By Proposition \ref{p-e-1} and Proposition \ref{D(x,L)} we have
 \begin{equation}
        \label{ineq D only}
\frac{\E\left[D(X,L_g)\right]}{\E\left[\Nnumber(X,L_g)\right]^2}=O\left(\frac{1}{L_g^8}\right).
    \end{equation}
    Therefore if $\lim\limits_{g\to\infty}\omega(g)=+\infty$, by \eqref{proof upper ineq}, \eqref{ineq B-N^2}, \eqref{ineq A only}, \eqref{ineq C only} and \eqref{ineq D only}, we have $$
\lim_{g\to\infty}\Prob\left(X\in \sM_g; \ \Nnumber(X,L_g)=0\right)=0.
    $$
It follows that  \eqref{eq up f8} holds by  \eqref{link f-8 with Nstar}. This finishes the proof of Theorem  \ref{thm upper bound of ns-sys}.
\end{proof}
\bibliographystyle{plain}
\bibliography{ref}

\end{document}